\newtheorem{theorem}{Theorem}[section]
\newtheorem{lemma}[theorem]{Lemma}
\theoremstyle{definition}
\newtheorem{definition}[theorem]{Definition}
\theoremstyle{remark}
\newtheorem{remark}[theorem]{Remark}
\newcommand{\norm}[1]{\lVert#1\rVert}
\def\R{\mathbb{R}}
\def\B{\mathfrak{B}}
\def\H{\mathfrak{H}}
\def\Z{\mathfrak{Z}}
\DeclareMathOperator{\tr}{tr}
\DeclareMathOperator{\St}{St}
\DeclareMathOperator{\im}{im}
\newcommand{\dualcomplex}{\raisebox{0.2ex}{\(\star\)}}
\newcommand{\whitney}[2]{\mathcal{P}_1^-\Lambda^{#1}(#2)}
\numberwithin{equation}{section}
\title[DEC Convergence and Stability in Two Dimensions]%
      {Convergence and Stability of Discrete Exterior Calculus for the Hodge Laplace Problem in Two Dimensions}
\author[C.~Zhu, S.~H.~Christiansen, K.~Hu, A.~N.~Hirani]{%
Chengbin Zhu$^{1}$, Snorre H.~Christiansen$^{2}$, Kaibo Hu$^{3}$, Anil N.~Hirani$^{1}$%
}
\thanks{$^{1}$Department of Mathematics, University of Illinois at Urbana-Champaign, Urbana, U.S.A.; 
\texttt{cz43@illinois.edu}, \texttt{hirani@illinois.edu}.}
\thanks{$^{2}$Department of Mathematics, University of Oslo, Oslo, Norway; 
\texttt{snorrec@math.uio.no}.}
\thanks{$^{3}$Mathematical Institute, University of Oxford, Oxford, United Kingdom; 
\texttt{kaibo.hu@maths.ox.ac.uk}.}
\date{First posted on arXiv on May 13, 2025}
\begin{document}

\begin{abstract}
We prove convergence and stability of the discrete exterior calculus (DEC) solutions for the Hodge-Laplace problems in two dimensions for families of meshes that are non-degenerate Delaunay and shape regular. We do this by relating the DEC solutions to the lowest order finite element exterior calculus (FEEC) solutions. A Poincar\'e inequality and a discrete inf-sup condition for DEC are part of this proof. We also prove that under appropriate geometric conditions on the mesh the DEC and FEEC norms are equivalent. Only one side of the norm equivalence is needed for proving stability and convergence and this allows us to relax the conditions on the meshes. 
\end{abstract}

\maketitle

\section{Introduction}

Discrete exterior calculus (DEC) is a framework for discretizing exterior calculus on simplicial complexes that are intended to approximate manifolds in arbitrary dimension~\citep{Hirani2003}. The method is independent of any considerations of charts and transition functions required for manifold definition and also does not depend on global embedding in $\R^N$. The main application has been to numerical methods for partial differential equations, both linear and nonlinear~\citep{MoHiSa2016,WaJaHiSa2023,MoRaSaRo2025}. DEC operators act on cochains which would be referred to as degrees of freedom in finite element (FE) language. The DEC cochains are identical to the degrees of freedom of the lowest order finite element exterior calculus (FEEC)~\citep{ArFaWi2010, Arnold2018}, and the differential operators also directly correspond. Actually these constructs date back to work by de Rham and Whitney. From a FE point of view DEC provides modified mass matrices that are computed using measures of the primal and dual mesh entities. This modification can be  analyzed as a variational crime using Strang's lemma.

The question of convergence in DEC can be asked by interpolating the cochains on the mesh to get piecewise polynomials. There is numerical evidence for convergence of DEC methods for the Hodge-Laplace problem in two dimensions~\citep{Kalyanaraman2015}, but a general proof of convergence and stability remained open.\footnote{We note that independently and simultaneously with this work~\cite{GuPo2025} proved the convergence and stability of the Hodge-Laplace problem on well-centered triangulations, employing generalized Whitney forms on the dual mesh.}
In dimension $n$ the Hodge-Laplace problem consists of two families of $n+1$ problems. The problems are indexed by the degree $k=0,\ldots,n$ of the forms involved. One family is for natural boundary conditions and one for essential. In vector calculus language $k=0$ case with natural boundary conditions corresponds to the scalar Poisson equation with homogeneous Neumann boundary condition and the $k=n$ case to the scalar problem with Dirichlet boundary condition. For the essential boundary condition family, $k=0$ corresponds to Dirichlet and $k=n$ to Neumann boundary conditions. In two dimensions the $k=1$ case uses the vector Laplacian $\operatorname{curl}\,\operatorname{curl} - \operatorname{grad}\,\operatorname{div}$. See~\cite{ArFaWi2010,Arnold2018} for introduction and analysis of these problem in FEEC. The analysis of the scalar case can also be found in~\cite{Cia91} and the vector case is an example of mixed finite element methods~\citep{RobTho91,BofBreFor13}. In the case of DEC, the convergence and stability for $k=0$ case with essential Dirichlet boundary conditions was proved in~\cite{ScTs2020} (although there is a gap in their proof as we point out later). Their proof is for shape regular families of well-centered triangulations~\citep{VaHiGuRa2010} (which means acute triangulations in two dimensions) in general dimension. In two dimensions the $k=0$ case is implied by various finite element results as we summarize in~\S\ref{subsec:implied}. The $k=1$ and $k=2$ cases in two dimension have been open.

\subsection{Summary of results} The results we prove are summarized here. See later sections for the notation and terminology. 

For piecewise linear surface $\Omega$ we prove the following results:
\begin{enumerate*}[label=\textit{(\roman*)}]
    \item In \S\ref{sec:norm-acute} Theorem~\ref{thm:acute-equivalence} shows that the DEC and FEEC norms are uniformly equivalent for uniformly acute families of triangulations of $\Omega$. The acute case is easiest to explain. This restriction on triangulation is relaxed in a later section. We point out the gap in the norm equivalence proof in~\cite{ScTs2020} in Remark~\ref{rem:ScTs-error}.
    \item The results of~\S\ref{sec:norm-acute} are extended in~\S\ref{sec:norm-delaunay} to allow for right-angled and obtuse triangles. Theorem~\ref{thm:delaunay-equivalence} proves that the DEC and FEEC norms are equivalent for uniformly Delaunay uniformly boundary acute families of triangulations which are curvature bounded. (This last condition is only needed for 0-forms.)  Note that only one side of norm equivalence is needed for stability and convergence proofs which are proved in~\S\ref{sec:convergence}.
    \item The difference between DEC and FEEC inner products in terms of the mesh parameter $h$ is estimated in terms of the $H\Lambda^k(\Omega)$ norm in~\S\ref{sec:ip}. Theorem~\ref{thm:ip-error} proves this estimate for DEC regular families, that is, triangulation families that are shape regular and non-degenerate Delaunay.
    \item For DEC regular triangulations two versions of a DEC Poincar\'e equality are proved in Theorem~\ref{thm:PI}. One version is in terms of FEEC norms and the other in terms of DEC norms, both using orthogonality with respect to the DEC inner product. 
\end{enumerate*}

For domains $\Omega \subset \R^2$ we prove the following results relating to DEC convergence and stability in~\S\ref{sec:convergence}. (The planarity assumption is needed because we use FEEC machinery from~\cite{ArFaWi2010}.) All of these are for a shape regular family of non-degenerate Delaunay triangulations, that is, for DEC regular triangulations. 
\begin{enumerate*}[label=\textit{(\roman*)},resume]
    \item The DEC Poincar\'e inequality in Theorem~\ref{thm:PI} leads to a discrete inf-sup condition for DEC that follows from the stability result proved in Theorem~\ref{thm:inf-sup-FEEC-norm}. This is stated in terms of FEEC norm using FEEC harmonic forms.
    \item The stability result is used to estimate the difference between DEC and FEEC solutions for the weak mixed formulation of the Hodge-Laplace problem with natural boundary conditions for 0, 1 and 2 forms. This is the content of Theorem~\ref{thm:error-FEEC-H} which is formulated in terms of FEEC norm and FEEC harmonic forms.
    \item Theorem~\ref{thm:harmonic-comparison} proves a comparison result between DEC and FEEC harmonic forms. 
    \item This allows for a stability result in Theorem~\ref{thm:inf-sup-DEC-norm} formulated in terms of DEC norm and DEC harmonic forms.
    \item An error estimate between DEC and FEEC solutions for the Hodge-Laplace problem are then proved using DEC harmonic forms in Theorem~\ref{thm:error-DEC-H}.
\end{enumerate*}

\section{Background and setup} \label{subsec:background}

\subsection{Triangulations and DEC spaces}
Let $X$ be a simplicial complex of dimension 2 which is realizable as a $C_0$ topological manifold. $X$ may be embedded in $\R^N$ for some $N \ge 2$ or may be specified by gluing information on individual triangles, each of which is separately realized as a Euclidean triangle. An example of $X$ is a piecewise linear surface mesh embedded in $\R^3$ or a triangulation of a planar domain in $\R^2$ with piecewise linear boundary. Let $\Omega$ be the underlying space of a fixed geometric realization of $X$. To fix ideas it is convenient to let $X$ be a fixed triangulation of a piecewise linear triangle mesh surface $\Omega$ embedded in $\R^3$ or triangulation of a planar domain. We will be considering a family $\{X_h\}$ of meshes parametrized by a real number $h > 0$ denoting also mesh-size. Each member of $\{X_h\}$ triangulates the same underlying space $\Omega$.

\begin{remark}
    When using infinite dimensional Sobolev spaces defined on $\Omega$ and when referring to convergence to solutions in these spaces, we will require $\Omega \subset \R^2$ since the stability and convergence theory in FEEC is most well developed for this case. When proving convergence of DEC to FEEC or showing norm equivalence, the planarity of $\Omega$ is not assumed and it can be a piecewise linear surface.
\end{remark}

The following geometric conditions on triangulations and/or families of triangulations of $\Omega$ will be relevant:
\begin{enumerate}[label=(\roman*)]
\item Acute: $\theta <\pi/2$ for all angles $\theta$.
\item Uniformly acute: $\theta \le \pi/2 - \delta_{\pi/2}$ for some fixed $\delta_{\pi/2} > 0$ for all angles $\theta$.
\item Boundary acute: $\theta_\partial < \pi/2$ for all angles $\theta_\partial$ opposite to boundary edges.
\item Uniformly boundary acute: $\theta_\partial \le \pi/2 - \delta_{\pi/2}$ for some fixed $\delta_{\pi/2} > 0$ for all angles $\theta_\partial$ opposite to boundary edges.
\item Delaunay: $\theta_i + \theta_j \le \pi$ for all pairs of angles $\theta_i, \theta_j$ opposite to internal edges. Equivalently every circumcircle has no mesh vertex in its interior.
\item Non-degenerate Delaunay: $\theta_i + \theta_j < \pi$ for all pairs of angles $\theta_i, \theta_j$ opposite to internal edges. Equivalently every circumcircle has exactly 3 vertices on it.
\item Uniformly Delaunay: $\theta_i + \theta_j \le \pi -\delta_\pi$ for some fixed $0 < \delta_\pi < \pi$ for all pairs of angles $\theta_i, \theta_j$ opposite to internal edges.
\item Shape regular: $\theta \ge \delta_0$ for some fixed $\delta_0 > 0$ for all angles $\theta$. Equivalent conditions are that the ratio $r/R$ of inscribed circle radius to circumcircle radius is bounded below and that the ratio of triangle areas to circumradii is bounded below. See Appendix.
\item DEC regular: Shape regular and non-degenerate Delaunay.
\item Curvature bounded: Angle defect at each vertex $v$ is bounded below by $-N$, for some constant $N > 0$. Angle defect is $2\pi - \sum_{i} \theta_i$ where the sum is over all triangles $i$ containing $v$ and $\theta_i$ is the angle at $v$ of triangle $i$.
\end{enumerate} 
Shape regularity is interesting only for a family of triangulations, since any given mesh is shape regular. Every acute triangulation is non-degenerate Delaunay and uniformly acute is uniformly Delaunay. The name curvature bounded is reasonable since angle defect is related to a commonly used definition for discrete Gaussian curvature of a piecewise linear triangle mesh surface.

In two dimensions the relevant spaces for DEC are $C^k(X)$, the primal $k$-cochains and the corresponding dual cochains $D^{2-k}(\dualcomplex X)$ on the circumcentric dual cell complex $\dualcomplex X$ for $k=0,1,2$~\citep{Hirani2003}. The dual of a vertex $\sigma^0$ is the union of all triangles of the form $[c(\sigma^0)=\sigma^0, c(\sigma^1), c(\sigma^2)]$ where $c(\sigma^1), c(\sigma^2)$ are \emph{circumcenters} of all edges and triangles satisfying $\sigma^0 \prec \sigma^1 \prec \sigma^2$. In a planar domain this is the Voronoi cell of the vertex. Dual of an edge is the union of edges $[c(\sigma^1), c(\sigma^2)]$ and dual of a triangle is its circumcenter. The volumes of the dual cells are computed using signed volumes of the component simplices. For non-degenerate Delaunay triangulations that are boundary acute the resulting volumes are positive~\cite{HiKaVa2013}.

\subsection{Hodge-Laplace problem}
 We will use the notation of~\cite{ArFaWi2010} for the spaces relevant to the Hodge-Laplace problem in the FEEC context. The domain spaces for the exterior derivative are
\[
    H\Lambda^k(\Omega) := \{\omega \in L^2\Lambda^k(\Omega) \;|\; d\omega \in L^2\Lambda^{k+1}(\Omega)\} \quad\text{and}\quad 
    \mathring{H}\Lambda^k(\Omega) := \{\omega \in H\Lambda^k(\Omega) \;|\; \tr_{\partial\Omega}\omega = 0\}\, ,
\]
with inner product 
\begin{equation}\label{eq:Hlambda-ip}
    \langle \omega, \eta\rangle_{H\Lambda^k} := \langle\omega,\eta\rangle_{L^2\Lambda^k} + \langle d\omega,d\eta\rangle_{L^2\Lambda^{k+1}}\,
\end{equation}
and an analogous one on $\mathring{H}\Lambda^k$.
Let $\B^k = \im d^{k-1}$ and $\Z^k = \ker d^k$. Then $\H^k = \Z^k \cap \B^{k\perp}$ is the space of harmonic forms with the orthogonality with respect to the inner product~\eqref{eq:Hlambda-ip}. The spaces $\mathring{\B}^k$, $\mathring{\Z}^k$ and $\mathring{H}^k$ are defined similarly.

Similarly, we define
$$
    H^{\ast}\Lambda^k(\Omega) := \{\omega \in L^2\Lambda^k(\Omega) \;|\; \delta \omega \in L^2\Lambda^{k-1}(\Omega)\} \quad\text{and}\quad 
    \mathring{H}^{\ast}\Lambda^k(\Omega) := \{\omega \in H^{\ast}\Lambda^k(\Omega) \;|\; \tr_{\partial\Omega}\star\omega = 0\}\, .
$$

Given $f \in L^2\Lambda^k(\Omega)$ the Hodge-Laplace problem with natural boundary conditions is to find $u \in H\Lambda^k(\Omega)\cap \mathring{H}^{\ast}\Lambda^{k}(\Omega)$, $u \perp \H^k$,  such that
\begin{equation}\label{eq:Hodge-Laplace-natural}
\langle du, dv\rangle+\langle \delta u, \delta v\rangle=\langle f-p, v\rangle, \quad \forall v\in H\Lambda^k(\Omega)\cap \mathring{H}^{\ast}\Lambda^{k}(\Omega)
\end{equation}
where $p = P_{\H^k} f$ is the $L^2$ orthogonal projection of $f$ into $\H^k$. Here $\Delta$ is the Hodge-Laplacian $d\delta + \delta d$ where $d$ is the exterior derivative and $\delta$ the codifferential. ($d\delta + \delta d$ reduces to $\delta d$ for $k=0$ and $d \delta$ for $k=2$.) The Hodge-Laplace problem with essential boundary conditions is to find $u \in \mathring{H}\Lambda^k(\Omega)\cap H^{\ast}\Lambda^{k}(\Omega)$, $u \perp \mathring{\H}^k$,
\begin{equation}\label{eq:Hodge-Laplace-essential}
\langle du, dv\rangle+\langle \delta u, \delta v\rangle=\langle f-p, v\rangle, \quad \forall v\in \mathring{H}\Lambda^k(\Omega)\cap {H}^{\ast}\Lambda^{k}(\Omega).
\end{equation}
 
The FEEC spaces relevant to this work are $\whitney{k}{\Omega}$, the lowest order piecewise polynomial Whitney $k$-forms~\citep{Whitney1957,ArFaWi2010,Arnold2018} for $k=0,1,2$. These are finite dimensional subspaces of $H\Lambda^k(\Omega)$ and of $\mathring{H}\Lambda^k(\Omega)$ after incorporating boundary conditions. For $k=0$ this is the space of continuous linear polynomials and for $k=2$ piecewise constants. For $k=1$ the vector field proxies are N\'ed\'elec or (rotated) Raviart-Thomas elements. 

\subsection{DEC and FEEC inner products and norms}

Let $W: C^k(X)\to \whitney{k}{\Omega}$ be the Whitney map that interpolates the cochains and $R:\whitney{k}{\Omega} \to C^k(X)$ the de Rham map consisting of integration of the Whitney forms on the $k$-simplices.
For $\omega,\eta \in \whitney{k}{\Omega}$ the \emph{DEC inner product} $\langle \omega,\eta\rangle_D$ is
\begin{equation}\label{eq:dec-ip}
    \langle\omega,\eta\rangle_D := \sum_{\sigma \in X}
    \frac{|\star \sigma|}{|\sigma|} \, R(\omega)(\sigma)\;R(\eta)(\sigma)\, ,
\end{equation}
where the sum is taken over all $k$-simplices $\sigma \in X$ and $R(\omega)(\sigma)$ is the value of the cochain $R(\omega) \in C^k(X)$ on $k$-simplex $\sigma$. Here $|\sigma|$ denotes the $k$-volume of $\sigma$ which is 1 for a vertex, length for an edge and area for a triangle and \(|\star \sigma|\) is the $(2-k)$-volume of the dual cell $\star \sigma$. By analogy we will call the $L^2\Lambda^k(\Omega)$ inner product restricted to $\whitney{k}{\Omega}$ the \emph{FEEC inner product} and denote it $\langle \omega, \eta\rangle_F$. This is defined 
\begin{equation}\label{eq:feec-ip}
    \langle \omega,\eta\rangle_F := \int_{\Omega} \langle \omega, \eta \rangle \, d\mu\, ,
\end{equation}
where the inner product in the integrand is taken pointwise, and \(d\mu\) is the volume form on the domain $\Omega$. The corresponding norms will be denoted $\norm{\omega}_D$, $\norm{\omega}_F$ etc.

\begin{remark}
    We could have defined these inner products and norms for cochains $\omega,\eta\in C^k(X)$ instead of for Whitney forms $\whitney{k}{\Omega}$. In that case the DEC inner product would use $\omega$ and $\eta$ instead of $R(\omega)$ and $R(\eta)$ and FEEC inner product would use $W(\omega)$ and $W(\eta)$ instead of $\omega$ and $\eta$. Also note that the FEEC versions could have been written without the $F$ subscript since these are the standard $L^2$ inner products and norms acting on the fields, but we retain the $F$ to distinguish between DEC and FEEC.
\end{remark}

As is usual in FE, these inner products and norms can be implemented using mass matrices. If a mesh has $N_k$ number of $k$-simplices, $k=0,1,2$, the DEC Hodge star $*^D_k$ (mass matrix for $k$-cochains) is an $N_k\times N_k$ diagonal matrix. The diagonal entry corresponding to a $k$-simplex $\sigma$ is $|\star\sigma|/|\sigma|$ where $|\star \sigma|$ is the measure of the \emph{circumcentric} dual cell $\star\sigma$ and $|\sigma|$ that of $\sigma$. As mentioned above these diagonal Hodge star matrices in DEC have positive entries if the mesh is non-degenerate Delaunay and boundary acute. Thus $\star^D_k$ can be used to define a norm and inner product. The FEEC mass matrices $*^F_k$ are constructed by using Whitney form basis corresponding to the $k$-simplices of $X$ in~\eqref{eq:feec-ip}. The FEEC mass matrices for 0-forms and 1-forms are sparse but not diagonal. For $\omega,\eta\in \whitney{k}{\Omega}$ if $w, v\in \R^{N_k}$ are the vectors of values of $R(\omega), R(\eta)\in C^k(X)$ then $\langle\omega,\eta\rangle_D = w^T *_k^D v$ and $\langle\omega,\eta\rangle_F = w^T *_k^F v$.

\subsection{DEC results implied by finite element results}
\label{subsec:implied}
We first point out that in fact the convergence and stability for $k=0$ case for the DEC solution of~\eqref{eq:Hodge-Laplace-natural} for families of DEC-regular triangulations of $\Omega \subset \R^2$ follows from a mass lumping result in finite elements~\citep{HaLa1993}. (See also \cite{ChHa2011}.) This is because DEC Hodge star $*^D_1$ is the same as the mass lumped mass matrix in~\cite{HaLa1993,BoKe1999,ChHa2011} as shown next.

For a triangle $T$ with vertices $v_0, v_1, v_2$, let \( e_2 = [v_0, v_1] \) denote the edge opposite vertex \( v_2 \), and let \( \star e_2 \) be its dual edge. Let $d\mu$ be the volume form on $T$. Then, the DEC Hodge star $*^D_1$ entry for edge $e_2$ satisfies:
\begin{equation}\label{eq:lump}
  -\int_T\langle d\lambda_0, d\lambda_1 \rangle d\mu= \frac{|\star e_2|}{|e_2|},  
\end{equation}
where \( \lambda_i \) denotes the barycentric coordinate associated with vertex \( v_i \), and \( |e_2| \), \( |\star e_2| \) are the lengths of the edge and its dual, respectively. Let the angles at $v_0, v_1, v_2$ be $A,B,C$ and the side lengths opposite these $a,b,c$ and $\mu$ the area of $T$. Then $|d\lambda_0| = 1/(c\sin B), |d\lambda_1| = 1/(c\sin A)$ implies that
\[
    -\int_T\langle d\lambda_0, d\lambda_1 \rangle d\mu = 
    \frac{\cos C}{c^2 \sin A \sin B} \mu = 
    \frac{ab \sin C \cos C}{2c^2 \sin A \sin B} = 
    \frac{\sin A \sin B \sin C \cos C}{2\sin^2 C \sin A \sin B} = 
    \frac{\cos C}{2\sin C} = \frac{|\star e_2|}{|e_2|}\, .
\]
Let $M_1$ be the lumped mass matrix with diagonal entries $-(\int_{T_-} \langle d\lambda_i, d\lambda_j\rangle d\mu + \int_{T+}\langle d\lambda_i, d\lambda_j\rangle d\mu)$ for internal edge $[i,j]$ shared by triangles $T_-$ and $T_+$ and $-\int_T \langle d\lambda_i, d\lambda_j\rangle d\mu$ for boundary edge in triangle $T$. For a shape regular family  of triangulations of $\Omega$ the finite element solution to~\eqref{eq:Hodge-Laplace-natural} for $k=0$ is stable and convergent. This is because the norm approximated using the lumped matrix $M_1$ is bounded by the finite element $L^2(\Omega)$ norm~\citep{HaLa1993}. 

Since $M_1 = *_1^D$, for the non-degenerate Delaunay case the stiffness matrices for the scalar problem are identical in DEC and FE and the right hand sides are different, so the methods are nearly identical in that case.

\begin{remark}\label{rem:star1}
The observation~\eqref{eq:lump} also implies a unique characterization of $*_1^D$ in two dimensions. Let $K$ be the stiffness matrix for the scalar problem using the FE mass matrix without lumping. For non-degenerate Delaunay meshes the lumped mass matrix $M_1$ (and thus $*_1^D$) is the \emph{unique} diagonal matrix which yields stiffness matrix $K$~\citep{HaLa1993,BoKe1999}.
\end{remark}

\subsection{Previous work}\label{sec:previous}
As mentioned earlier the only convergence and stability result known for the DEC setup before this work and~\cite{GuPo2025}, whether implied by mass lumping or other FE results, or proved directly for DEC, was for the $k=0$ case, with the only direct DEC result being for the essential Dirichlet boundary condition for $k=0$ in general dimension in~\cite{ScTs2020}. They prove this using a Poincar\'e inequality for DEC which follows from their Theorem 4.1 on norm equivalence. The next remark is about the gap in that theorem.
\begin{remark}\label{rem:ScTs-error}
In~\cite{ScTs2020} it is claimed the DEC and FEEC norms are equivalent for a shape regular family of acute triangulations~\cite[Theorem 4.1]{ScTs2020}. For a counterexample consider a shape regular family of meshes in which there is a pair of triangles with a shared edge $e$ such that the sum of opposite angles approaches $\pi$. Now consider a 1-cochain $\alpha$ taking the value 1 on $e$ and 0 on others. Then $\norm{\alpha}_D$ approaches 0 whereas $\norm{\alpha}_F$ remains bounded above 0.
\end{remark}

Mass-lumping has been used to relate finite element methods to other numerical methods, in particular Finite Difference Time Domain methods (such as the Yee scheme)~\citep{Jol03} and Finite Volume methods~\citep{BarMaiOud96} 
In~\cite{ChHa2011} it was used to construct gauge invariant methods for the Schr\"odinger equation that can be compared with a standard FE method. Here we establish similar results in two dimensions, relating DEC and FEEC. Although the mass lumping on Whitney 1-forms coincides exactly with the DEC mass matrix, the mass lumping of 0-forms used in~\cite{ChHa2011} is different from the DEC Hodge star 0 matrix. 

In addition to mass lumping, there are other ways to related DEC to various frameworks for discretizing PDEs. For example, the linear system discretizing the mixed formulation of the Hodge-Laplace problem in lowest order FEEC can be turned into a DEC method by replacing mass matrices. The use of primal and dual meshes in DEC is related to similar ideas in staggered grid methods~\citep{Yee1966,HaWe1965,RiThKlSk2010} and the use of cochains and coboundary operator as exterior derivative is related to ideas in~\cite{Bossavit1988,Mattiussi1997,SeSeSeAd2000}. None of the above however give a proof of convergence for the Hodge-Laplace problem for DEC for all $k$. For example, one primal-dual method for simplicial meshes is the covolume method of~\cite{Nicolaides1992}. The error analysis in~\cite{Nicolaides1992} is for the scalar case (or rather $k=2$ case) as application of the analysis of the div-curl problem. The analysis appears to be difficult to generalize to other degrees forms because it relies on a particular orthogonal decomposition that may not generalize. Subsequent work~\citep{NiWa1998} extended the covolume approach to Maxwell’s equations in three dimensions. The $k=2$ case for the covolume method using a mixed formulation has also been analyzed~\citep{ChKwVa1998}.

In~\cite{AdCaHuZi2021} the inf-sup condition is analyzed in the case of Maxwell's equation. They assume $\operatorname{div} B = 0$ as a condition for inf-sup analysis which is relevant in Maxwell's but not directly applicable (although some of the techniques could be used in the Hodge-Laplace problem). Both~\cite{AdCaHuZi2021} and~\cite{ScTs2020} use norm equivalence in their paper, but do not mention the fact that the norm equivalence constant depends on the mesh geometry and so a specific mesh shape condition must be imposed in addition to shape regularity. We point out which geometric conditions are appropriate for norm equivalence to hold. 


\section{Norm equivalence for uniformly acute triangulations}
\label{sec:norm-acute}

We first prove the equivalence of DEC and FEEC norm for acute families of meshes triangulating a piecewise linear surface mesh $\Omega$. For acute triangulations we show that uniform acuteness is sufficient for the existence of mesh-independent constant ensuring the equivalence of DEC norm and FEEC norm on 1-forms. In the next section the acuteness condition is relaxed.


Let $T$ be a triangle with edge vectors $e_i, e_j, e_k$ where the edges are oriented arbitrarily. Define the number $\delta_T$ for $T$ be
\[
    \delta_T := \min_{i \ne j \ne k} 
    \frac{\vert{\langle e_i,e_j\rangle}\vert}{\norm{e_k}^2}\, .
\]
Another description of $\delta_T$ is given by angles. If the angles of $T$ are $A, B, C$, then 
\[
    \delta_T = \min \left\{\frac{\sin A \sin B \cos C}{\sin^2 C}, \frac{\sin B \sin C \cos A}{\sin^2 A}, \frac{\sin C \sin A \cos B}{\sin^2 B} \right\}\, .
\]
\begin{remark}\label{rem:sa-lambda}

A family $\{X_h\}$ of triangulations of $\Omega$ being uniformly acute is equivalent to existence of a global constant $\delta > 0$ independent of $h$ such that $\delta_T \ge \delta$ for all triangles $T$ in the family. This is because first uniformly acute implies lower bound for angles. $A = \pi - B - C \ge \pi - 2(\pi/2-\delta_{\pi/2}) \ge 2\delta_{\pi/2}$, therefore, uniformly acute gives use a lower bound of $\delta_T$ since $\frac{\sin A\sin B\cos C}{\sin^2 C} \ge \sin^2 2\delta_{\pi/2} \cos \pi_{\delta/2}$. On the other direction, assume $C$ is the largest angles in a triangle, then we claim $C \le \pi/2 - \arcsin \delta$. This is because $\sin A \sin B \le \sin^2 C$, then $\cos C \ge \delta$.
\end{remark}

\subsection{Norm of Whitney 1-forms for a single triangle}


\begin{lemma}[Norm inequalities for basis forms] Let $T$ be a triangle and $\omega_i \in \whitney{k}{T}$ the basis Whitney form corresponding to edge $i$. Then there exist constants $c_1(\delta_T), c_2(\delta_T)$ such that
    \[
    c_1(\delta_T) \norm{\omega_i}_F^2 \le \norm{\omega_i}^2_D  \le  c_2(\delta_T) \norm{\omega_i}_F^2\, .
    \]
\end{lemma}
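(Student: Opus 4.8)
The plan is to reduce each side to an explicit function of the three angles of $T$ and then compare. Fix the edge $i$ and label the vertices so that $i=[v_0,v_1]$ is opposite $v_2$, whose angle I call $C$, with the remaining angles $A,B$ at $v_0,v_1$. First I would compute the DEC norm. Since the de Rham map satisfies $R(\omega_i)(\sigma)=\delta_{i\sigma}$ on edges, the sum defining $\langle\cdot,\cdot\rangle_D$ in \eqref{eq:dec-ip} collapses to the single diagonal entry, so $\norm{\omega_i}_D^2=|\star e_i|/|e_i|$, and \eqref{eq:lump} identifies this with $\tfrac12\cot C$. For the FEEC norm I would write $\omega_i=\lambda_0\,d\lambda_1-\lambda_1\,d\lambda_0$, expand $|\omega_i|^2$ pointwise, and integrate against \eqref{eq:feec-ip} using the standard barycentric moments $\int_T\lambda_p^2\,d\mu=\mu/6$ and $\int_T\lambda_p\lambda_q\,d\mu=\mu/12$, together with $|d\lambda_p|^2=(\text{length opposite }v_p)^2/(4\mu^2)$ and the value of $\langle d\lambda_0,d\lambda_1\rangle$ already extracted from \eqref{eq:lump}. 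After substituting the law of sines this becomes a function of $A,B,C$ alone.

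Both quantities are invariant under scaling of $T$, so their quotient depends only on the angles. Carrying out the computation I expect
\[
\frac{\norm{\omega_i}_D^2}{\norm{\omega_i}_F^2}=\frac{6\cos C}{\dfrac{\sin^2 A+\sin^2 B}{\sin A\sin B}+\cos C}\,.
\]
The upper bound is then immediate and in fact uniform: by the AM--GM inequality $\tfrac{\sin^2 A+\sin^2 B}{\sin A\sin B}\ge 2$, and since $\cos C\le 1$ on an acute triangle the quotient is at most $6/3=2$. Hence one may take $c_2(\delta_T)=2$, with no genuine dependence on $\delta_T$ on this side, giving $\norm{\omega_i}_D^2\le 2\,\norm{\omega_i}_F^2$.

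The content of the lemma is the lower bound, and this is where $\delta_T$ enters. I would first note that $\delta_T$ bounds the cosine of every angle from below: the $\delta_T$-term attached to the largest angle, combined with the fact that in an acute triangle the largest angle has the largest sine, shows $\cos\theta\ge\delta_T$ for every angle $\theta$ of $T$ (as already recorded in Remark~\ref{rem:sa-lambda}), in particular $\cos C\ge\delta_T$. The same bound forces every angle to be at most $\arccos\delta_T<\pi/2$, and since the angles sum to $\pi$ this yields a lower bound $\theta\ge\pi-2\arccos\delta_T=:\theta_0(\delta_T)>0$, whence $\sin A,\sin B\ge\sin\theta_0(\delta_T)$. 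Consequently $\tfrac{\sin^2 A+\sin^2 B}{\sin A\sin B}\le 2/\sin^2\theta_0(\delta_T)$, and feeding both estimates into the quotient gives
\[
\frac{\norm{\omega_i}_D^2}{\norm{\omega_i}_F^2}\ge\frac{6\delta_T}{2/\sin^2\theta_0(\delta_T)+1}=:c_1(\delta_T)>0\,.
\]
The main obstacle is precisely this lower bound: unlike the upper bound it genuinely degenerates as $C\to\pi/2$, where $\norm{\omega_i}_D^2\to 0$ while $\norm{\omega_i}_F^2$ stays bounded below, so the argument must convert the single number $\delta_T$ into \emph{simultaneous} control of $\cos C$ away from $0$ and of all angles away from $0$. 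Once this single-triangle estimate is established, summing it over a uniformly acute family via Remark~\ref{rem:sa-lambda} is routine.
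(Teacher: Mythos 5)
Your proposal is correct and takes essentially the same route as the paper: both identify $\norm{\omega_i}_D^2 = \tfrac{1}{2}\cot C$, compute the FEEC norm explicitly on $T$, and bound the ratio, and your angle expression for the quotient is exactly the paper's $\frac{6ab\cos C}{c^2+3ab\cos C}$ rewritten via the laws of sines and cosines, with the identical upper bound $c_2(\delta_T)=2$. The only divergence is the lower bound, where the paper applies the definition of $\delta_T$ directly in side-length form, $ab\cos C \ge \delta_T c^2$, to get the sharper constant $6\delta_T/(1+3\delta_T)$ in one line, whereas your detour through the minimum-angle bound $\theta_0(\delta_T)=\pi-2\arccos\delta_T$ is valid but produces a much cruder constant (of order $\delta_T^3$ rather than $\delta_T$ as $\delta_T \to 0$).
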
\label{lem:basiseq}
\begin{proof}
    It suffices to prove the result for a single basis Whitney form. Let the vertices of $T$ be $v_0, v_1, v_2$. Without loss of generality, we choose $\omega_2$ corresponding to the edge opposite to $v_2$. An easy computation shows that the FEEC-norm of $\omega_2$ given by:
    \[
    \norm{\omega_2}^2_F = \frac{\mu}{6} (h_{00} - h_{01} + h_{11}),
    \]  
where \(\mu\) is the area of $T$, and \( h_{ij} = \langle d\lambda_i, d\lambda_j \rangle \), with \(\lambda_i\) denoting the barycentric coordinates corresponding to vertex $v_i$. Thus, we obtain:
\begin{equation*}
    \begin{aligned}
        \norm{\omega_2}^2_F &= \frac{\mu}{6} (h_{00} - h_{01} + h_{11}) = \frac{\mu}{6} \left( \frac{1}{c^2 \sin^2 B} + \frac{\cos C}{c^2 \sin A \sin B} + \frac{1}{c^2 \sin^2 A} \right) \\
        &= \frac{abc}{12R} \cdot \frac{4R^2(a^2 + ab\cos C + b^2)}{a^2 b^2 c^2} = \frac{c^2 + 3ab\cos C}{12 ab \sin C}\, .
    \end{aligned}
\end{equation*}
Following Remark~\ref{rem:star1}
$\norm{\omega_2}_D^2 = \cos C/(2\sin C)$. Thus
\[
\frac{\norm{\omega_2}_F^2}{\norm{\omega_2}_D^2} = \frac{c^2 + 3ab\cos C}{6ab\cos C} \geq \frac{1}{2}.
\]
We also have that $ab \cos C \geq \delta_T c^2$.
Using this, we derive:
\[
\frac{\norm{\omega_2}_D^2}{\norm{\omega_2}_F^2} =  \frac{6ab\cos C}{c^2 + 3ab\cos C}\geq \frac{6\delta_T}{1 + 3\delta_T}\, .
\]
Setting $c_1(\delta_T) = 6\delta_T/(1+3\delta_T)$ and $c_2(\delta_T) = 2$ the result follows.  
\end{proof}


\begin{lemma}\label{lem:F-lowerbound} Let $T$ be a triangle with $\delta_T > 0$. Then for any $\alpha \in \whitney{1}{T}$ we have that $\norm{\alpha}_D^2 \ge c_1(\delta_T)\norm{\alpha}_F^2$.
\end{lemma}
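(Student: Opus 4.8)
The plan is to reduce the statement to a finite-dimensional quadratic-form inequality and then close it using the single-basis-form estimate of Lemma~\ref{lem:basiseq}. Write $\alpha = \sum_i a_i \omega_i$ with $a_i = R(\alpha)(e_i)$ the cochain values on the three edges. Since $R(\omega_i)(e_j) = \delta_{ij}$, the de Rham map recovers these coefficients, $R(\alpha)(e_j) = a_j$, and because $*_1^D$ is diagonal the DEC norm splits with no cross terms:
\[
\norm{\alpha}_D^2 = \sum_j \frac{|\star e_j|}{|e_j|}\, a_j^2 = \sum_j \norm{\omega_j}_D^2\, a_j^2 .
\]
Thus it suffices to bound the FEEC norm from above by a constant multiple of $\sum_j \norm{\omega_j}_D^2 a_j^2$, i.e. to establish the matrix inequality $*_1^D \succeq c_1(\delta_T)\, *_1^F$ on $T$.

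The main route I would take is a direct upper bound on $\norm{\alpha}_F$ in terms of the coefficients. By the triangle inequality $\norm{\alpha}_F \le \sum_i |a_i|\,\norm{\omega_i}_F$; applying the lower bound $c_1(\delta_T)\norm{\omega_i}_F^2 \le \norm{\omega_i}_D^2$ from Lemma~\ref{lem:basiseq} to replace each $\norm{\omega_i}_F$ by $\norm{\omega_i}_D/\sqrt{c_1(\delta_T)}$, and then Cauchy--Schwarz over the three edges, gives $\norm{\alpha}_F^2 \le (3/c_1(\delta_T))\,\norm{\alpha}_D^2$, equivalently $\norm{\alpha}_D^2 \ge (c_1(\delta_T)/3)\,\norm{\alpha}_F^2$. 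This is the claim up to renaming the constant, since only positivity and dependence on $\delta_T$ are needed in the subsequent theory; one may absorb the dimensional factor $3$ into the definition of the constant.

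The point where care is needed is the off-diagonal FEEC terms $\langle \omega_i, \omega_j\rangle_F$ for $i \ne j$: the DEC norm is diagonal but the FEEC mass matrix is not, and it is precisely these cross terms that the triangle inequality dispatches crudely at the cost of the factor $3$. If one wants the sharp constant instead, I would form the explicit $3\times 3$ Whitney mass matrix $*_1^F$ (its entries are the $h_{ij}\mu$ combinations already computed for the diagonal in Lemma~\ref{lem:basiseq}) and verify directly that $*_1^D - c_1(\delta_T)\,*_1^F$ is positive semidefinite, using $|\langle e_i,e_j\rangle| \ge \delta_T \norm{e_k}^2$ (equivalently $ab\cos C \ge \delta_T c^2$) to control each entry. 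In either route the hypothesis $\delta_T > 0$ is essential and enters through Lemma~\ref{lem:basiseq}: as a triangle approaches a right angle the corresponding DEC diagonal entry $\norm{\omega_i}_D^2 = \cos C/(2\sin C)$ tends to $0$ while $\norm{\omega_i}_F^2$ stays bounded below, so the inequality can only hold with a constant that degenerates as $\delta_T \to 0$.
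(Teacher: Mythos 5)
Your proof is correct and follows essentially the same route as the paper's: expand $\alpha$ in the Whitney basis, exploit the diagonality of the DEC inner product so that $\norm{\alpha}_D^2 = \sum_j a_j^2 \norm{\omega_j}_D^2$, dispatch the FEEC cross terms by a crude (Cauchy--Schwarz type) bound, and close with the basis-form estimate of Lemma~\ref{lem:basiseq}. The only difference is the value of the constant (your $c_1(\delta_T)/3$ versus the paper's $3\delta_T/(1+3\delta_T)$, the paper using a factor $2$ where you use the always-safe factor $3$), which is immaterial since only positivity and dependence on $\delta_T$ are needed downstream.
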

\begin{proof}
    Let $\alpha = \alpha^0 \omega_0 + \alpha^1 \omega_1 + \alpha^2 \omega_2$, then we have 
    \begin{equation*}
    \begin{aligned}
        \norm{\alpha}_F^2 &= \langle \alpha^0 \omega_0 + \alpha^1 \omega_1 + \alpha^2 \omega_2, \alpha^0 \omega_0 + \alpha^1 \omega_1 + \alpha^2 \omega_2\rangle_F
        \le 2(\norm{\alpha^0\omega_0}_F^2 + \norm{\alpha^1\omega_1}_F^2 + \norm{\alpha^2\omega_2}_F^2) \\
        &\le 2\left(\frac{1+3\delta_T}{6\delta_T}\right) (\norm{\alpha^0\omega_0}_D^2 + \norm{\alpha^1\omega_1}_D^2 + \norm{\alpha^2\omega_2}_D^2) = 2\left(\frac{1+3\delta_T}{6\delta_T}\right)\norm{\alpha}_D^2\,.
    \end{aligned}
    \end{equation*}
Thus $\frac{3\delta_T}{1+3\delta_T}\norm{\alpha}_F^2 \le \norm{\alpha}_D^2$ and setting $c_1(\delta_T) = \frac{3\delta_T}{1+3\delta_T}$, we have $\norm{\alpha}_D^2 \ge c_1(\delta_T)\norm{\alpha}_F^2$ for any $\alpha \in \whitney{1}{T}$.
\end{proof}

In order to prove the other direction for a single triangle, we need the following two lemmas.
\begin{lemma} Let $\delta_T$ be the constant for an acute triangle with angles $A,B,C$. Then there exists a constant $\gamma$ depending only on $\delta_T$ such that 
\begin{equation}\label{eq:sin-ratios}
    \frac{\sin A}{\sin B} \ge \gamma, \quad\frac{\sin B}{\sin C} \ge \gamma,\quad \frac{\sin C}{\sin A} \ge \gamma\, .
\end{equation}
\end{lemma}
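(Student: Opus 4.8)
The plan is to show that a positive value of $\delta_T$ simultaneously keeps every angle bounded away from $0$ and from $\pi/2$, after which the three sine ratios are automatically bounded below by the monotonicity of $\sin$ on a compact subinterval of $(0,\pi/2)$. The most transparent route is to translate everything into side lengths: by the law of sines $\sin A : \sin B : \sin C = a : b : c$, so each ratio in \eqref{eq:sin-ratios} equals the corresponding ratio of opposite side lengths, and it suffices to bound $a/b$, $b/c$, $c/a$ below by a constant depending only on $\delta_T$.

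First I would rewrite the defining quantity for $\delta_T$ in terms of sides. Combining the law of sines with the law of cosines gives $\frac{\sin A\sin B\cos C}{\sin^2 C}=\frac{ab\cos C}{c^2}=\frac{a^2+b^2-c^2}{2c^2}$, and cyclically for the other two terms; this is exactly the quantity $|\langle e_i,e_j\rangle|/\norm{e_k}^2$ in the original definition of $\delta_T$. Hence $\delta_T=\min\{(a^2+b^2-c^2)/2c^2,\ (b^2+c^2-a^2)/2a^2,\ (c^2+a^2-b^2)/2b^2\}$, and in particular each of these three expressions is at least $\delta_T$.

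Next, assume without loss of generality that $c=\max\{a,b,c\}$ and use only the inequality $\delta_T\le (a^2+b^2-c^2)/2c^2$, that is $a^2+b^2\ge(1+2\delta_T)c^2$. Since $b\le c$ this forces $a^2\ge 2\delta_T c^2$, and symmetrically $b^2\ge 2\delta_T c^2$; thus $a/c\ge\sqrt{2\delta_T}$ and $b/c\ge\sqrt{2\delta_T}$. Because $c$ is the largest side we have $a/c\le 1$ and $b/c\le 1$, so dividing gives $a/b\ge\sqrt{2\delta_T}$ and $b/a\ge\sqrt{2\delta_T}$ as well. Collecting these, all six ratios among $a,b,c$ lie in $[\sqrt{2\delta_T},\,1/\sqrt{2\delta_T}]$, so \eqref{eq:sin-ratios} holds with $\gamma=\sqrt{2\delta_T}$.

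I do not expect a genuine obstacle here; the lemma is essentially a quantitative restatement of the content of Remark~\ref{rem:sa-lambda}. The one point that needs care is recognizing that a single one of the three defining inequalities, the one associated with the largest side, already yields all three bounds, so acuteness (equivalently $\delta_T>0$) is used only to guarantee the numerators are positive. An alternative, purely angular argument would first invoke Remark~\ref{rem:sa-lambda} to bound the largest angle by $\arccos\delta_T<\pi/2$, then use $A+B+C=\pi$ to bound the smallest angle below by $\pi-2\arccos\delta_T>0$, and finally bound $\sin$ above and below on the interval $[\pi-2\arccos\delta_T,\ \arccos\delta_T]$; this produces a (less explicit) admissible $\gamma$ and confirms the result.
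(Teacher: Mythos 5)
Your proof is correct, and it takes a genuinely different route from the paper's. The paper works entirely with angles: from the defining inequality $\delta_T \le \sin A \sin C \cos B/\sin^2 B$ and the expansion $\sin A = \sin(B+C) = \sin B\cos C + \cos B\sin C$, it rewrites $\sin A\sin C\cos B = \sin^2 A - \sin A\sin B\cos C \le \sin^2 A$ (acuteness makes the subtracted term nonnegative) and concludes $\sin A/\sin B \ge \sqrt{\delta_T}$, treating each ratio by the same identity. You instead convert everything to side lengths via the laws of sines and cosines, identify $\delta_T$ with $\min\bigl\{(a^2+b^2-c^2)/(2c^2),\,(b^2+c^2-a^2)/(2a^2),\,(c^2+a^2-b^2)/(2b^2)\bigr\}$, and then use only the single inequality attached to the largest side: $a^2+b^2 \ge (1+2\delta_T)c^2$ together with $a,b \le c$ controls all six side ratios at once, giving the constant $\gamma = \sqrt{2\delta_T}$, which is in fact slightly sharper than the paper's $\sqrt{\delta_T}$. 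What each approach buys: the paper's is a two-line symmetric computation, one trigonometric identity per ratio; yours isolates the geometric mechanism (only the constraint on the longest side matters) and yields the better constant. Two small points you leave implicit but which do hold: your bounds $c/a,\,c/b \ge \sqrt{2\delta_T}$ require $\sqrt{2\delta_T} \le 1$, which is not assumed but follows from your own chain $\sqrt{2\delta_T} \le a/c \le 1$; and the parenthetical claim that acuteness is ``equivalent to $\delta_T>0$'' is accurate only after the absolute values in the paper's definition are dropped, since with $\lvert\langle e_i,e_j\rangle\rvert$ an obtuse triangle also has $\delta_T>0$ --- but as the lemma assumes acuteness, neither point affects the validity of your argument.
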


\begin{proof}
    We only prove $\sin A/\sin B \ge \gamma$, the rest can be obtained similarly.
    By the definition of DEC-constant, we have $\sin A \sin C \cos B/\sin^2 B \ge \delta_T$, then using $\sin A = \sin (\pi - (B+C)) = \sin B\cos C + \cos B\sin C$, we have
    \[
    \delta_T \le \frac{\sin A \sin C \cos B}{\sin^2 B} = \frac{\sin^2 A - \sin A \sin B \cos C}{\sin^2 B}\le  \frac{\sin^2 A}{\sin^2 B}\, .
    \]
Then letting $\gamma = \sqrt{\delta_T}$, we get $\sin A/\sin B \ge \gamma$.
\end{proof}

\begin{lemma} If all angles of a triangle are $\le \pi/2 - \delta_{\pi/2}$ for some $0 < \delta_{\pi/2} < \pi/2$ there exists a constant $S > 0$, such that the shape constant for the triangle $\frac{r}{R} \ge S$.
\end{lemma}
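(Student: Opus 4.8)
The plan is to reduce the bound on $r/R$ to a uniform lower bound on the angles of $T$ and then invoke a closed-form expression for $r/R$ in terms of those angles.

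First I would observe that the hypothesis forces all angles to be bounded away from $0$ as well as from $\pi/2$. Writing the angles as $A,B,C$ with $A+B+C=\pi$ and each $\le \pi/2-\delta_{\pi/2}$, the smallest angle satisfies
\[
A = \pi - B - C \ge \pi - 2\left(\tfrac{\pi}{2}-\delta_{\pi/2}\right) = 2\delta_{\pi/2},
\]
and the same holds for $B$ and $C$ by symmetry. Hence every angle lies in the interval $[2\delta_{\pi/2},\,\pi/2-\delta_{\pi/2}]$, with endpoints depending only on $\delta_{\pi/2}$ and not on the particular triangle. This is the only place the hypothesis is used, and it mirrors the angle lower bound already derived in Remark~\ref{rem:sa-lambda}.

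Next I would use the classical identity $r = 4R\,\sin(A/2)\sin(B/2)\sin(C/2)$, i.e.
\[
\frac{r}{R} = 4\,\sin\tfrac{A}{2}\,\sin\tfrac{B}{2}\,\sin\tfrac{C}{2}.
\]
Since each half-angle satisfies $A/2 \ge \delta_{\pi/2}$ and lies in $(0,\pi/2)$ where $\sin$ is positive and increasing, we get $\sin(A/2) \ge \sin\delta_{\pi/2} > 0$, and likewise for the other two factors. Therefore $r/R \ge 4\sin^3\delta_{\pi/2} =: S > 0$, a constant depending only on $\delta_{\pi/2}$, which is the claim.

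There is essentially no analytic obstacle here; the only points requiring care are recalling the correct trigonometric identity and verifying that the resulting constant $S$ is genuinely independent of $T$ (which it is, since it is built solely from $\delta_{\pi/2}$). If one prefers not to quote Euler's identity, the same conclusion follows by computing $r/R$ directly from $r = \mathrm{Area}/s$ with $s$ the semiperimeter, $\mathrm{Area} = abc/(4R)$, and the law of sines $a = 2R\sin A$, $b = 2R\sin B$, $c = 2R\sin C$. These give the symmetric formula $r/R = 2\sin A\sin B\sin C/(\sin A + \sin B + \sin C)$, whose numerator is bounded below by $2\sin^3(2\delta_{\pi/2})$ and whose denominator is at most $3$, yielding the explicit bound $r/R \ge \tfrac{2}{3}\sin^3(2\delta_{\pi/2})$.
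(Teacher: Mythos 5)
Your proof is correct, but it follows a genuinely different route from the paper's. The paper does not use the angle-sum bound inside this lemma: it invokes the preceding sine-ratio lemma (with $\gamma=\sqrt{\delta_T}$), derives the angle lower bound $\theta=\arctan\sqrt{\delta_T}$ by a contradiction argument (if $A\le\theta$ then $\sin B\le\cos\theta$, so $B\le\pi/2-\theta$, forcing $C\ge\pi/2$ and contradicting acuteness), and then concludes from the symmetric formula $r/R=\sin A\sin B\sin C/(\sin A+\sin B+\sin C)\ge\sin^3\theta/3$. You instead obtain the angle bound $A,B,C\ge 2\delta_{\pi/2}$ in one line from the angle sum---exactly the computation the paper records in Remark~\ref{rem:sa-lambda} but does not reuse in this proof---and finish with Euler's identity $r/R=4\sin\tfrac{A}{2}\sin\tfrac{B}{2}\sin\tfrac{C}{2}$. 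Your route is more elementary and self-contained: it bypasses $\delta_T$ and the sine-ratio lemma entirely, and the constant $S=4\sin^3\delta_{\pi/2}$ is explicit in terms of the actual hypothesis. What the paper's detour buys is that its constant is phrased via $\delta_T$, the quantity that drives the norm-equivalence constants throughout that section, so it slots directly into the surrounding chain of lemmas. Two incidental observations: your symmetric formula $r/R=2\sin A\sin B\sin C/(\sin A+\sin B+\sin C)$ carries the correct factor of $2$, which the paper's displayed version omits (harmless there, since it only makes the paper's lower bound conservative); and in your fallback argument the step $\sin X\ge\sin(2\delta_{\pi/2})$ for $X\in[2\delta_{\pi/2},\,\pi/2-\delta_{\pi/2}]$ relies on the minimum of $\sin$ being attained at the left endpoint, i.e.\ on $\sin(2\delta_{\pi/2})\le\cos\delta_{\pi/2}$, equivalently $\delta_{\pi/2}\le\pi/6$---which holds automatically whenever a triangle satisfying the hypothesis exists, but is worth stating.
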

\begin{proof}
    Using the previous lemma, we know that there exists a constant \( 1 > \gamma > 0 \) such that:
    \[
    \frac{\sin A}{\sin B} \geq \gamma, \quad \frac{\sin B}{\sin C} \geq \gamma, \quad \frac{\sin C}{\sin A} \geq \gamma\, .
    \]
    Our first goal is to establish a lower bound for the angles \( A, B, C \). Let \( \theta \) be the solution to the equation $\sin \theta = \gamma \cos \theta$.
    
    We claim that \( \theta \) serves as a lower bound for the angles. To see this, assume, for contradiction, that \( A \leq \theta \). Then, we have:
    $\sin B \leq (\sin A)/\gamma \le (\sin \theta)/\gamma = \cos \theta$. Thus
    $B \leq \pi/2 - \theta$. Since the triangle is acute, the third angle satisfies $C = \pi - A - B \geq \pi/2$.  However, this contradicts our assumption that the triangle is acute. Hence, our assumption must be false, proving that all angles \( A, B, C \) are bounded below by $ \theta = \arctan \sqrt{\delta_T}$. Next, we compute $r/R$
    \[
    \frac{r}{R} = \frac{abc}{4R^2(a+b+c)} = 
    \frac{\sin A\sin B\sin C}{\sin A + \sin B + \sin C} 
    \geq \frac{\sin^3 \theta}{3}\,.
    \]
    Letting $S = (\sin^3 \theta)/3$, gives the result.
\end{proof}

\begin{remark}
    The lower bound of the shape constant for a triangle is equivalent to having a lower bound on the interior angles of the triangle. And this lemma holds only for acute triangulations. 
\end{remark}

\begin{lemma}\label{lem:F-upperbound} For a triangle $T$ with $\delta_T > 0$  and $\alpha \in \whitney{k}{\Omega}$, there exists $c_2(\delta_T) > 0$ such that $\norm{\alpha}_D^2 \le c_2(\delta_T)\norm{\alpha}_F^2$ on $T$.
\end{lemma}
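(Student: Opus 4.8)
The plan is to reduce the statement to a single eigenvalue estimate for the FEEC Gram matrix of the Whitney basis, and then to control that eigenvalue using the shape information already extracted from $\delta_T$. The essential difficulty, and the reason this direction is harder than Lemma~\ref{lem:F-lowerbound}, is that the DEC inner product is diagonal in the Whitney basis (the matrix $\star_1^D$ is diagonal) while the FEEC inner product is not. For the lower bound one could expand $\norm{\alpha}_F^2$ by the triangle inequality into a sum of per-basis terms, each dominated by the corresponding diagonal DEC term; for the upper bound the cross terms $\langle \omega_i,\omega_j\rangle_F$ carry signs and cannot be discarded, so the full quadratic structure of the FEEC form must be retained.

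Concretely, writing $\alpha = \sum_i \alpha^i\omega_i$ on $T$, the per-basis bound from Lemma~\ref{lem:basiseq} gives $\norm{\omega_i}_D^2 \le 2\norm{\omega_i}_F^2$, and since $\star_1^D$ is diagonal,
\[
    \norm{\alpha}_D^2 = \sum_i (\alpha^i)^2\norm{\omega_i}_D^2 \le 2\sum_i (\alpha^i)^2\norm{\omega_i}_F^2 .
\]
It therefore suffices to dominate the diagonal part of the FEEC form by the full form, i.e.\ to find $C(\delta_T)$ with $\sum_i (\alpha^i)^2\norm{\omega_i}_F^2 \le C(\delta_T)\norm{\alpha}_F^2$ for all $\alpha$. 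Introducing the normalized (correlation) Gram matrix $\tilde G$, with entries $\tilde G_{ij} = \langle\omega_i,\omega_j\rangle_F/(\norm{\omega_i}_F\norm{\omega_j}_F)$ and unit diagonal, this is exactly the statement $\lambda_{\min}(\tilde G) \ge 1/C(\delta_T)$, after the substitution $\beta^i = \alpha^i\norm{\omega_i}_F$. Combining the two reductions yields $c_2(\delta_T) = 2C(\delta_T)$.

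It then remains to bound $\lambda_{\min}(\tilde G)$ away from zero. The matrix $\tilde G$ is positive definite whenever the three Whitney $1$-forms are linearly independent, which holds on any nondegenerate triangle, so the only obstruction is degeneration of $T$; this is where the hypothesis $\delta_T > 0$ enters. Both $\norm{\omega_i}_D^2$ and the entries of the Gram matrix are scale invariant and depend only on the angles $A,B,C$, so $\tilde G$ is a continuous matrix-valued function of the angle triple. The admissible set $\{(A,B,C): A+B+C=\pi,\ \delta_T \ge \delta\}$ is, for each fixed $\delta>0$, a compact subset of the open set of nondegenerate shapes: indeed $\delta_T\ge\delta$ forces each angle to be strictly acute and bounded away from $\pi/2$ (Remark~\ref{rem:sa-lambda}) and, through the sin-ratio bounds~\eqref{eq:sin-ratios} and the ensuing lower bound on the angles, bounded below as well. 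On this compact set $\lambda_{\min}(\tilde G)$ is continuous and strictly positive, hence attains a positive minimum, furnishing $C(\delta)$ and therefore $c_2(\delta_T)$.

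The main obstacle is precisely this last quantitative step: showing that the FEEC Gram matrix does not degenerate relative to its diagonal as $T$ varies. The compactness argument above gives the cleanest existence proof but no explicit constant; a fully explicit $c_2(\delta_T)$ would instead require computing the off-diagonal correlations $\tilde G_{ij}$ as functions of $A,B,C$ and bounding $\det\tilde G$ and $\operatorname{tr}\tilde G$ to extract a lower bound on the smallest eigenvalue, which is the calculation I would carry out if explicit dependence on $\delta_T$ is wanted. Finally I note that the cases $k=0$ and $k=2$ are simpler: for $k=2$ the space $\whitney{2}{T}$ is one-dimensional, so both norms are multiples of a single basis form and the bound is immediate, while for $k=0$ the same Gram-matrix reduction applies with the barycentric-coordinate basis; the substantive content is thus the $k=1$ case treated above.
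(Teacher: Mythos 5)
Your proof is correct and is essentially the paper's argument: both reduce to coefficient vectors, bound the diagonal DEC form using the $\delta_T$-dependent angle information, and invoke compactness of a family of scale-normalized triangle shapes to bound the smallest eigenvalue of the FEEC Gram matrix from below. The differences are minor in substance: you obtain the diagonal bound from Lemma~\ref{lem:basiseq} (via $\norm{\omega_i}_D^2 \le 2\norm{\omega_i}_F^2$) and work with the normalized (correlation) Gram matrix, whereas the paper bounds $\norm{\alpha}_D^2 \le \tfrac{1}{2}\cot\theta\,\norm{\alpha}_{l^2}^2$ directly, with $\theta = \arctan\sqrt{\delta_T}$ the angle lower bound, and compares against the raw coefficient $l^2$ norm. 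One substantive point in your favor: your compact set $\{(A,B,C)\,:\,A+B+C=\pi,\ \delta_T\ge\delta\}$ keeps all angles bounded away from $0$ and $\pi/2$, while the paper's set $S_{\pi/2}$ (diameter-one triangles with all angles $\le \pi/2$) contains in its closure degenerate thin triangles, on which the Whitney forms and hence the constant $\epsilon$ are not controlled; so your formulation of the compactness step is the more careful one.
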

\begin{proof}
    The FEEC norm for 1-forms is scale invariant~\cite[equation (40)]{ChHa2011}. DEC-norm for 1-forms is scale invariant in two dimensions since it is a ratio of lengths. Thus we can assume that the triangle has diameter 1. Then we can consider the set $$S_{\pi/2} = \{T\, \vert\,  \text{angles of $T$ are $\le \pi$/2, and diameter($T$) = 1 }  \}\, .$$ $S_{\pi/2}$ is a compact set, therefore, there exists an $\epsilon >0$, such that $\norm{\alpha}_F \ge \epsilon \norm{\alpha}_{l^2}$. Let $\alpha = \alpha^0 \omega_0 + \alpha^1 \omega_1 + \alpha^2 \omega_2$,then 
    \begin{equation*}
        \begin{aligned}
            \norm{\alpha}^2_D & = (\alpha^0)^2 \frac{1}{2}\cot A + (\alpha^1)^2 \frac{1}{2}\cot A + (\alpha^2)^2 \frac{1}{2}\cot A \le ((\alpha^0)^2 + (\alpha^1)^2 + (\alpha^2)^2) \frac{1}{2}\cot\theta = \frac{1}{2} \cot \theta \norm{\alpha}_{l^2}^2\, ,
        \end{aligned}
    \end{equation*}
    where $\theta$ is the lower bound of angles from the last lemma. (This is shape regularity.)
    Therefore, by setting $c_2(\delta_T) = 1/(2\epsilon\sqrt{\delta_T})$, we have $\norm{\alpha}_D^2 \le c_2(\delta_T)\norm{\alpha}_F^2$ for $\alpha \in \whitney{1}{T}$.
\end{proof}

\subsection{Norm of Whitney 0-forms for a single triangle}

Equivalence between the DEC norm and the FEEC norm on Whitney 0-forms is guaranteed for any acute mesh and does not require uniform acuteness.
For 0-forms (scalar functions), i.e., the barycentric basis functions $ \lambda_i$ on a triangle $T$ with area $\mu$, $\int_T \lambda_i \lambda_j \, d\mu = \mu/12$ if $i \neq j$ and $\int_T \lambda_i \lambda_j d\mu = \mu/6$ if $i = j$. Thus the mass matrix of FEEC is
\begin{equation}\label{eq:0FEECmass}
\mu\begin{pmatrix}
1/6 & 1/12 &  1/12\\
1/12 & 1/6 & 1/12 \\
1/12 & 1/12 & 1/6
\end{pmatrix}
\end{equation}

\begin{figure}[t]
    \centering
    \begin{tikzpicture}[scale=4]

    \coordinate (v0) at (0.0, 0.0);
    \coordinate (v1) at (1.0, 0.0);
    \coordinate (v2) at (0.6, 0.8);
    \coordinate (O) at (0.5, 0.25);

    \coordinate (M_01) at ($ (v0)!0.5!(v1) $);
    \coordinate (M_02) at ($ (v0)!0.5!(v2) $);
  
  
    \draw[thin] (v0) -- (v1) -- (v2) -- cycle;
    \draw[thin] (O) -- (M_01);
    \draw[thin] (O) -- (M_02);
    \draw[dashed] (M_01) -- (M_02);
    \draw[dashed] (O) -- (v2);
    \draw[dashed] (O) -- (v1);
    \draw[dashed] (O) -- (v0);

    \node[below] at (v0) {$v_0$};
    \node[below] at (v1) {$v_1$};
    \node[above] at (v2) {$v_2$};
    \node[above right] at (O) {$O$};

    \node[below] at (M_01) {$M_1$};
    \node[above left] at (M_02) {$M_2$};

\end{tikzpicture}
    \caption{For an acute triangle the dual area of a vertex is bounded above by triangle area and below by half of that. Here $O$ is the circumcenter of $v_0v_1v_2$. The dual of vertex $v_0$ is the quadrilateral $v_0M_1OM_2$. Its area is bounded below by the area of triangle $v_0M_1M_2$ and above by the half the area of $v_0v_1v_2$.}
    \label{fig:0-form-acute}
\end{figure}
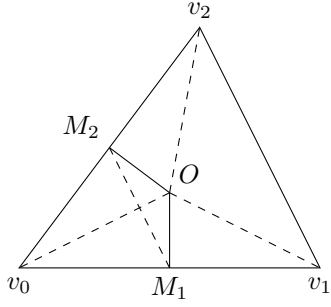

\begin{lemma}\label{lem:0-acute-T}
    For an acute triangle $T$ and $\alpha \in \whitney{0}{T}$ there exist $c_1, c_2 > 0$ such that 
    \[
    c_1\norm{\alpha}^2_F \le \norm{\alpha}^2_D  \le c_2 \norm{\alpha}^2_F\, .
    \]
\end{lemma}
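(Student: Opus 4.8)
The key observation is that for $0$-forms the DEC inner product is already diagonal in the vertex values. Writing $\alpha = \alpha^0\lambda_0 + \alpha^1\lambda_1 + \alpha^2\lambda_2$, the de Rham map returns the nodal values $R(\alpha)(v_i) = \alpha^i$, so $\norm{\alpha}_D^2 = \sum_i |\star v_i|_T\,(\alpha^i)^2$, where $|\star v_i|_T$ is the area of the part of the circumcentric dual cell of $v_i$ that lies inside $T$. Both $\norm{\cdot}_D^2$ and $\norm{\cdot}_F^2$ are therefore symmetric positive definite quadratic forms in the coefficient vector $\alpha = (\alpha^0,\alpha^1,\alpha^2)$, and the plan is to sandwich each of them between fixed multiples of $\norm{\alpha}_{l^2}^2$ and then chain the two sandwiches. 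The point is that all the constants will turn out to be absolute, independent of the particular acute triangle, which is exactly why acuteness rather than uniform acuteness suffices here.

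For the FEEC form I would diagonalize the mass matrix in~\eqref{eq:0FEECmass} directly. Factoring out the area writes it as $\mu$ times $\tfrac{1}{12}(I + \mathbf{1}\mathbf{1}^T)$ with $\mathbf{1}=(1,1,1)^T$, whose eigenvalues are $\mu/3$ (for the constant eigenvector) and $\mu/12$ (with multiplicity two). Hence $\tfrac{\mu}{12}\norm{\alpha}_{l^2}^2 \le \norm{\alpha}_F^2 \le \tfrac{\mu}{3}\norm{\alpha}_{l^2}^2$ for every $\alpha$.

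For the DEC form the content is geometric and is the one summarized in Figure~\ref{fig:0-form-acute}. Because $T$ is acute, its circumcenter $O$ lies in the interior, and the dual cell of a vertex $v_0$ inside $T$ is the quadrilateral $v_0 M_1 O M_2$ cut out by the perpendicular bisectors through the edge midpoints $M_1, M_2$ and $O$. I would show that its area is bounded below by the area of the triangle $v_0 M_1 M_2$, which equals $\mu/4$ by the midpoint similarity, and above by $\mu/2$. The same bound holds at each vertex, so every diagonal entry $|\star v_i|_T$ lies in $[\mu/4,\, \mu/2]$ and consequently $\tfrac{\mu}{4}\norm{\alpha}_{l^2}^2 \le \norm{\alpha}_D^2 \le \tfrac{\mu}{2}\norm{\alpha}_{l^2}^2$.

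Chaining the two sandwiches eliminates $\norm{\alpha}_{l^2}^2$ and the factor $\mu$, yielding $\tfrac34\norm{\alpha}_F^2 \le \norm{\alpha}_D^2 \le 6\norm{\alpha}_F^2$, so one may take $c_1 = 3/4$ and $c_2 = 6$. The main obstacle is the geometric step for the DEC form, and specifically the two area inclusions: that for an acute triangle $O$ lies on the far side of the midline $M_1 M_2$ from $v_0$ (so the quadrilateral genuinely contains the quarter-area triangle $v_0 M_1 M_2$), and that the cell area never exceeds $\mu/2$, a bound sharp in the right-angle limit, where $O$ reaches an edge midpoint and the cell of the right-angle vertex becomes exactly half of $T$. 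Once these two bounds are in hand, the remaining eigenvalue estimate and the chaining are elementary and, crucially, insensitive to the shape of $T$.
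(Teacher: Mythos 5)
Your proposal is correct and follows essentially the same route as the paper: bound each dual-cell area in an acute triangle between $\mu/4$ and $\mu/2$ (the content of Figure~\ref{fig:0-form-acute}), sandwich the FEEC norm via the eigenvalues of the mass matrix~\eqref{eq:0FEECmass}, and chain the two estimates through $\norm{\alpha}_{l^2}^2$. Your explicit eigenvalue computation ($\mu/3$ and $\mu/12$) even reproduces the paper's constants $c_1 = 1/(4\nu_0) = 3/4$ and $c_2 = 1/(2\nu_2) = 6$.
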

\begin{proof}
See Figure~\ref{fig:0-form-acute} for this proof. Let $\mu_i$ denote the dual area of vertex $v_i$ in $T$. Then $\mu_i \ge \text{area}(OM_2M_1) = \mu/4$ and $\mu_i \le \mu/2$. Since the FEEC mass matrix~\eqref{eq:0FEECmass} is positive definite, assume it has three eigenvalues $\nu_0 \ge \nu_1 \ge \nu_2 \ge 0$, then for any $\alpha \in \whitney{0}{T}$, $\alpha = f_0\lambda_0 + f_1\lambda_1 + f_2\lambda_2$,
\[
    \norm{\alpha}_D^2 = \sum_i f_i^2\mu_i \ge \frac{\mu}{4}\sum_{i}f_i^2 = \frac{1}{4\nu_0}\mu\nu_0\sum_{i}f_i^2 \ge \frac{1}{4\nu_0}\norm{\alpha}_F^2
\]
and
\[
    \norm{\alpha}_D^2 = \sum_i f_i^2\mu_i \le \frac{\mu}{2}\sum_{i}f_i^2 = \frac{1}{2\nu_2}\mu\nu_2\sum_{i}f_i^2 \le \frac{1}{2\nu_2} \norm{\alpha}_F^2\, .
\]    
\end{proof}

\subsection{From single triangle to triangulation}

We can now prove the equivalence of DEC and FEEC norms for acute triangulations using the lemmas from the previous two subsections. 

\begin{theorem}[Norm equivalence of DEC norm and FEEC norm]\label{thm:acute-equivalence}
Given a uniformly acute family $\{X_h\}$ triangulating a piecewise linear surface $\Omega$ there exists a constant $c_1(\delta), c_2(\delta) > 0$ independent of $h$  such that
\[
   c_1(\delta)\,\norm{\alpha}^2_F \le \,\norm{\alpha}^2_D  \le\, c_2(\delta) \,\norm{\alpha}^2_F\, ,
\]
for all $\alpha \in \whitney{k}{\Omega}$ for $k=0,1,2$.
\end{theorem}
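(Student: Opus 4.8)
The plan is to reduce the global statement to the single-triangle lemmas already established, the key being that \emph{both} norms split additively over the triangles of $X_h$. For any $\alpha\in\whitney{k}{\Omega}$ the FEEC norm is manifestly additive, $\norm{\alpha}_F^2=\sum_T\norm{\alpha|_T}_F^2$, since~\eqref{eq:feec-ip} is an integral over $\Omega=\bigcup_T T$. The first thing I would verify carefully is that the DEC norm is \emph{also} additive, $\norm{\alpha}_D^2=\sum_T\norm{\alpha|_T}_D^2$, where the single-triangle DEC norm uses the portion of each dual cell lying inside that triangle. For $k=1$ this is exactly the content of~\eqref{eq:lump}: the weight $|\star e|/|e|$ on an interior edge $e=[v_i,v_j]$ shared by $T_-,T_+$ equals $-\bigl(\int_{T_-}+\int_{T_+}\bigr)\langle d\lambda_i,d\lambda_j\rangle\,d\mu$, i.e.\ the sum of the two local contributions $|\star e\cap T_\pm|/|e|$, while $R(\alpha)(e)^2$ is common to both triangles; a boundary edge contributes a single term. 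For $k=0$ the dual (Voronoi) cell area splits as $|\star v|=\sum_{T\ni v}|\star v\cap T|$, matching the local norm of Lemma~\ref{lem:0-acute-T}; for $k=2$ the dual cell is the circumcenter, a point, so the term is already local.

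Granting this additivity, I would combine it with the per-triangle estimates. By Remark~\ref{rem:sa-lambda}, uniform acuteness yields a single $\delta>0$ with $\delta_T\ge\delta$ for every $T$ in the family. For $k=1$, Lemmas~\ref{lem:F-lowerbound} and~\ref{lem:F-upperbound} give, on each triangle, $c_1(\delta_T)\norm{\alpha|_T}_F^2\le\norm{\alpha|_T}_D^2\le c_2(\delta_T)\norm{\alpha|_T}_F^2$. Since $c_1(\delta_T)=3\delta_T/(1+3\delta_T)$ is increasing and $c_2(\delta_T)=1/(2\epsilon\sqrt{\delta_T})$ is decreasing in $\delta_T$, I may replace them by the mesh-independent constants $c_1(\delta)$ and $c_2(\delta)$; summing over $T$ and invoking additivity of both norms yields $c_1(\delta)\norm{\alpha}_F^2\le\norm{\alpha}_D^2\le c_2(\delta)\norm{\alpha}_F^2$.

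The endpoint cases are easier. For $k=0$ the constants in Lemma~\ref{lem:0-acute-T} depend only on the eigenvalues of the fixed matrix in~\eqref{eq:0FEECmass} (the area $\mu$ cancels), so they are absolute; acuteness alone already provides uniform constants and summing over triangles closes this case. For $k=2$ the computation is direct: a Whitney $2$-form has $|\star T|=1$ and $|T|=\mu_T$, so $\norm{\alpha}_D^2=\sum_T R(\alpha)(T)^2/\mu_T=\norm{\alpha}_F^2$ exactly, giving $c_1=c_2=1$.

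The one genuine subtlety, and the step I would justify most carefully, is the additive decomposition of the DEC norm, in particular that the two half–dual–edges of an interior edge meet at its midpoint so that the local contributions reassemble into the global weight $|\star e|/|e|$. This rests on acuteness, which places every circumcenter strictly inside its triangle and makes all local dual measures positive. Without it the circumcenter can leave the triangle, a local measure can turn negative, and the naive triangle-by-triangle lower bound fails; this is exactly the difficulty that motivates the relaxed treatment of right-angled and obtuse triangles in the next section.
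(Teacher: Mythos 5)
Your proof is correct and follows essentially the same route as the paper: reduce to the single-triangle Lemmas~\ref{lem:F-lowerbound}, \ref{lem:F-upperbound} and~\ref{lem:0-acute-T}, use Remark~\ref{rem:sa-lambda} and monotonicity of $c_1,c_2$ to replace $\delta_T$ by the uniform $\delta$, and sum over triangles. Your explicit verification that the DEC norm is additive over triangles (via~\eqref{eq:lump} for interior edges and the dual-cell decomposition for vertices) is a step the paper leaves implicit, but it is the same decomposition the paper relies on, so the two arguments coincide.
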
    
\begin{proof}
We first address the $k=1$ case. By Lemmas~\ref{lem:F-lowerbound} and~\ref{lem:F-upperbound} for a single triangle $T$, we have 
\[
    c_1(\delta_T)\norm{\alpha|_T}^2_{F} \le \norm{\alpha|_T}^2_{D}  \le c_2(\delta_T) \norm{\alpha|_T}^2_{F}\, ,
\]
where $c_1(x) = 3x/(1+3x)$ is monotonic increasing and $c_2(x) = 1/(2\delta \sqrt{x})$ is monotonic decreasing. Thus
\begin{equation*}
          \norm{\alpha}^2_D = 
          \sum_{T\in X_h} \norm{\alpha|_T}^2_{D} \le \sum_{T\in X_h} c_2(\delta_T) \norm{\alpha|_T}^2_{F} \le \sum_{T\in X_h} c_2(\delta)\norm{\alpha|_T}^2_{F} =
          c_2(\delta)\norm{\alpha}^2_F\, ,
\end{equation*}
and
\begin{equation*}
    \norm{\alpha}^2_D = 
    \sum_{T\in X_h} \norm{\alpha|_T}^2_{D} 
    \ge \sum_{T\in X_h} c_1(\delta_T) \norm{\alpha|_T}^2_{F}
    \ge \sum_{T\in X_h} c_1(\delta)\norm{\alpha|_T}^2_{F}
    = c_1(\delta)\norm{\alpha}^2_F\, .
\end{equation*}
The $k=2$ case is trivial since then the DEC and FEEC norms are the same.
For the $k=0$ case, to go from single triangle case to the mesh family is trivial since the constants in Lemma~\ref{lem:0-acute-T} do not depend on any properties of the triangle other than the fact that it is acute.
\end{proof}


To summarize, for a family $\{X_h\}$ of acute triangulations of $\Omega$, for $\alpha \in \whitney{k}{\Omega}$ for $k=0, 2$ acuteness suffices for norm equivalence. For $k=1$ shape regularity suffices for $\norm{\alpha}_D \le c_2 \norm{\alpha}_F$ and uniform acuteness suffices for $\norm{\alpha}_D \ge c_1 \norm{\alpha}_F$. Since uniform acuteness implies shape regularity, uniform acuteness suffices for norm equivalence. Uniform acuteness is actually not necessary for norm equivalence, as we will show in the next section. 

\section{Norm equivalence for uniformly Delaunay triangulations}
\label{sec:norm-delaunay}

In this section, we establish norm equivalence for meshes that may contain obtuse triangles or right angled triangles. For such meshes, we assume that the triangulation has acute triangles at the boundary and is non-degenerate Delaunay since that ensures that the DEC norm is actually a norm by ensuring that the DEC Hodge star matrices are positive definite~\citep{HiKaVa2013}. In solving PDEs the boundary acuteness is usually not required due to the boundary conditions so non-degenerate Delaunay condition suffices.

To ensure sufficient geometric regularity, we will consider a family $\{X_h\}$ of triangulations of $\Omega$ that is a shape regular, uniformly Delaunay and uniformly boundary acute. Let $\delta_0, \delta_\pi$ and $\delta_{\pi/2}$ be the respective constants. If $\Omega \subset \R^2$ then this is all we need to show that the DEC and FEEC norms are equivalent on the family $\{X_h\}$. To allow for a non-planar $\Omega$, that is, a triangulated surface, for the 0-form norm equivalence we will impose the additional condition that we call curvature boundedness. For 1-form norm equivalence this is not needed. 

\begin{figure}[ht]
    \centering
    \begin{tikzpicture}[scale=4]

    \coordinate (v0) at (0.0, 0.0);
    \coordinate (v1) at (1.0, 0.0);
    \coordinate (v2) at (0.6, 0.8);
    \coordinate (v3) at (0.5, -0.4);
  
    \coordinate (M_01) at ($ (v0)!0.5!(v1) $);
    \coordinate (M_02) at ($ (v0)!0.5!(v2) $);
    \coordinate (M_12) at ($ (v1)!0.5!(v2) $);
    \coordinate (M_03) at ($ (v0)!0.5!(v3) $);
    \coordinate (M_13) at ($ (v1)!0.5!(v3) $);
  
  
    \draw[thin] (v0) -- (v1) -- (v2) -- cycle;
    \draw[thin] (v1) -- (v3) -- (v0);
  
    \node[left] at (v0) {$v_0$};
    \node[right] at (v1) {$v_1$};
    \node[above] at (v2) {$v_2$};
    \node[below] at (v3) {$v_3$};
    \node[above right, xshift=5pt] at (v0) {$A_1$};
    \node[below, xshift=-1pt, yshift=-5pt] at (v2) {$C_1$};
    \node[above left, xshift=-3pt] at (v1) {$B_1$};
    \node[above, xshift=-1pt, yshift=2pt] at (v3) {$C_2$};
    \node[above] at (M_01) {$c$};
    \node[above left] at (M_02) {$b_1$};
    \node[above right] at (M_12) {$a_1$};
    \node[below left] at (M_03) {$b_2$};
    \node[below right] at (M_13) {$a_2$};

\end{tikzpicture}
    \caption{In a Delaunay pair of triangles the sum of angles $C_1+C_2$ opposite to a shared edge $c$ should satisfy $C_1 + C_2\le \pi$. DEC requires nondegenerate Delaunay condition $C_1 + C_2 < \pi$ and norm equivalence requires uniform Delaunay condition $C_1 + C_2 \le \pi - \delta_\pi$ for some fixed $0 < \delta_\pi < \pi$ for all such angle pairs.}
    \label{fig:delaunay}
\end{figure}
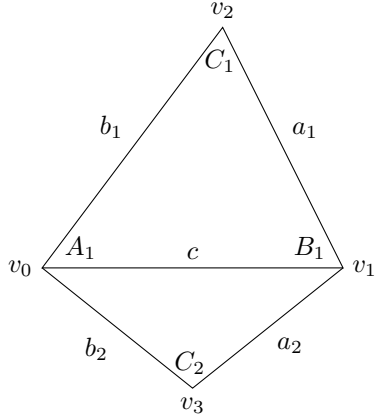

The positive definiteness of DEC Hodge star holds when pairs of triangles are considered. Since each pair is Delaunay one triangle can be obtuse as long as the sum of angles opposite the shared edge is less than $\pi$ (the non-degenerate Delaunay condition). However, we will have occasion to consider the Hodge star diagonal entries on a single triangle $T$ which may be obtuse. In that case $\norm{\,\alpha|_T\,}_D$ need not be a norm since it could be $\le 0$. In this case it will suffice to consider the absolute value for intermediate calculations and we will write $|\; \norm{\,\alpha|_T\,}_D\; |$.

\subsection{Norm of Whitney 1-forms}
We first show norm equivalence for basis elements, not in a single triangle, but for the whole mesh. 
\begin{lemma}\label{lem:basis1form-delaunay}
    Let $\Omega$ be a piecewise linear triangle mesh surface and $\{X_h\}$ a family of shape regular, uniformly Delaunay, uniformly boundary acute meshes triangulating $\Omega$. 
    Let $\omega_i^h\in \whitney{1}{X_h}$ be the Whitney form basis element associated with edge $i$ for a mesh $X_h$ in $\{X_h\}$. Then there exist $c_1, c_2 > 0$ only depending on $\delta_0, \delta_{\pi/2}$ and $\delta_\pi$ and not depending on $h$, such that 
    \[
      c_1\norm{\omega_i^h}_F^2 \le  \norm{\omega_i^h}^2_D \le c_2\norm{\omega_i^h}_F^2\, .
    \]
\end{lemma}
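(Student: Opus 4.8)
The plan is to localize and then argue by compactness, exactly in the spirit of the proof of Lemma~\ref{lem:F-upperbound}. The key observation is that the basis form $\omega_i^h$ is supported on the one or two triangles adjacent to edge $i$, so both $\norm{\omega_i^h}_D^2$ and $\norm{\omega_i^h}_F^2$ are determined entirely by the shape of this small local configuration and are independent of the rest of the mesh. Moreover, both norms are scale invariant for $1$-forms (the DEC norm is a ratio of lengths, and the FEEC norm is scale invariant by~\cite[equation (40)]{ChHa2011}), so the ratio $\norm{\omega_i^h}_D^2/\norm{\omega_i^h}_F^2$ depends only on the angles of the adjacent triangles. It therefore suffices to show this ratio is bounded above and below by positive constants over the set of admissible local shapes, and to observe that this set depends only on $\delta_0, \delta_{\pi/2}, \delta_\pi$ and not on $h$.

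First I would record the local expressions. For an interior edge shared by two triangles as in Figure~\ref{fig:delaunay}, with $C_1, C_2$ the angles opposite the shared edge $c$, the single DEC Hodge star diagonal entry (using~\eqref{eq:lump} and Remark~\ref{rem:star1}) gives
\[
  \norm{\omega_i^h}_D^2 = \frac{|\star e_i|}{|e_i|} = \tfrac12(\cot C_1 + \cot C_2) = \frac{\sin(C_1+C_2)}{2\sin C_1 \sin C_2}\, ,
\]
while $\norm{\omega_i^h}_F^2 = \norm{\omega|_{T_1}}_F^2 + \norm{\omega|_{T_2}}_F^2$ is the sum of the two single-triangle FEEC norms from Lemma~\ref{lem:basiseq}; this sum is strictly positive since $\omega_i^h$ is a nonzero form. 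For a boundary edge the form is supported on a single triangle and $\norm{\omega_i^h}_D^2 = \tfrac12\cot C$ with $C$ the angle opposite the boundary edge. Next I would set up the configuration space: fix the shared edge to $[(0,0),(1,0)]$, place the apex of $T_1$ in the upper half-plane and that of $T_2$ in the lower half-plane, and impose the closed constraints (shape regularity) that all six angles are $\ge \delta_0$, together with the uniform Delaunay constraint $C_1 + C_2 \le \pi - \delta_\pi$. The angle lower bound keeps the apices in a bounded region bounded away from the edge line, so this configuration space is compact. On it, $\norm{\omega_i^h}_D^2$ and $\norm{\omega_i^h}_F^2$ are continuous functions, and both are strictly positive: the FEEC norm as remarked, and the DEC norm because $C_1 + C_2 \in [2\delta_0, \pi - \delta_\pi]$ forces $\sin(C_1+C_2) \ge \min\{\sin 2\delta_0, \sin\delta_\pi\} > 0$. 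A continuous strictly positive function attains a positive minimum and finite maximum on a compact set, so the ratio is pinched between positive constants $c_1, c_2$ depending only on $\delta_0$ and $\delta_\pi$. The boundary-edge case is handled by the same argument on the compact space of normalized single triangles with $C \le \pi/2 - \delta_{\pi/2}$ (so $\cot C \ge \tan\delta_{\pi/2} > 0$), yielding constants depending on $\delta_0$ and $\delta_{\pi/2}$; taking the worse of the interior and boundary constants gives the claim.

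The main obstacle, and the place where the geometric hypotheses are genuinely used, is establishing that the DEC norm stays bounded away from $0$ uniformly. On a single obtuse triangle the contribution $\tfrac12\cot C_j$ can be negative, and without a quantitative gap the sum $\tfrac12(\cot C_1+\cot C_2)$ can approach $0$ even for shape regular meshes, which is precisely the failure exhibited in Remark~\ref{rem:ScTs-error}. The uniform Delaunay condition is exactly what rules this out by keeping $\sin(C_1+C_2)$ bounded below, and uniform boundary acuteness plays the analogous role for boundary edges; shape regularity is needed separately to guarantee that the configuration space is compact (so that the extrema are attained) rather than to guarantee positivity. I would take care to verify that the compact configuration set, and hence the constants $c_1, c_2$, is fixed once $\delta_0, \delta_{\pi/2}, \delta_\pi$ are fixed, so that no dependence on $h$ enters.
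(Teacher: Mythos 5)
Your proposal is correct, but it reaches the uniform bounds by a different mechanism than the paper. Both arguments share the same starting point: localization to the one or two triangles supporting $\omega_i^h$, and the explicit identity $\norm{\omega_i^h}_D^2 = \tfrac12(\cot C_1 + \cot C_2) = \sin(C_1+C_2)/(2\sin C_1\sin C_2)$ for an interior edge. From there the paper computes the FEEC norm in closed form as well, namely
\[
\norm{\omega_i^h}_F^2 = c^2\Bigl(\tfrac{1}{12a_1b_1\sin C_1}+\tfrac{1}{12a_2b_2\sin C_2}\Bigr) + \tfrac{\sin(C_1+C_2)}{4\sin C_1\sin C_2}\, ,
\]
and reads off both inequalities algebraically: since the first term is nonnegative and the second is exactly half the DEC norm, the upper bound holds with the explicit constant $c_2 = 2$ (using only shape regularity), while the lower bound follows from $\sin(C_1+C_2) \ge \min\{\sin 2\delta_0, \sin\delta_\pi\}$ together with a shape-regularity bound on $c^2/(a_ib_i\sin C_i)$, giving an explicit $c_1$. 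You instead treat the FEEC norm abstractly (continuous and positive on nondegenerate configurations) and invoke compactness of the normalized configuration space, in the style of the paper's own Lemma~\ref{lem:F-upperbound}; for boundary edges you rerun the compactness argument, where the paper simply cites Theorem~\ref{thm:acute-equivalence}. Your route is shorter and avoids the closed-form FEEC computation, and your compactness claim is sound (the angle lower bound does confine the apices to a compact region away from the edge line, and $\sin(C_1+C_2)\ge\min\{\sin 2\delta_0,\sin\delta_\pi\}$ correctly uses both shape regularity and the uniform Delaunay condition). What it gives up is what the paper's explicit computation buys: concrete constants, and a clean separation of which hypothesis drives which inequality — the paper's proof makes visible that shape regularity alone yields $\norm{\omega_i^h}_D^2 \le 2\norm{\omega_i^h}_F^2$, a fact exploited later when only one side of the equivalence is needed under weaker mesh assumptions. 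One small caveat: your closing remark that shape regularity enters only through compactness and "not to guarantee positivity" slightly misstates your own argument, since the lower end $C_1 + C_2 \ge 2\delta_0$ of the positivity bound is itself a consequence of shape regularity.
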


\begin{proof}
    If $i$ is a boundary edge, then the opposite angle is assumed uniformly acute, and the proof follows from Theorem~\ref{thm:acute-equivalence}. Now assume that $i$ is an internal edge, and hence is shared by two triangles like edge $v_0v_1$ in Figure \ref{fig:delaunay}. Then
    \begin{equation*}
        \begin{aligned}
            \norm{\omega_i^h}^2_D &= \frac{\cos C_1}{2\sin C_1} + \frac{\cos C_2}{2\sin C_2}
            = \frac{\sin C_2\cos C_1 + \sin C_1\cos C_2}{2\sin C_1 \sin C_2} = \frac{\sin(C_1 + C_2)}{2\sin C_1 \sin C_2}\, ,
        \end{aligned}
    \end{equation*}
    and
    \begin{equation*}
        \begin{aligned}
            \norm{\omega_i^h}^2_F &= \frac{c^2 + 3a_1b_1\cos C_1}{12a_1b_1\sin C_1} + \frac{c^2 +3a_2b_2\cos C_2}{12a_2b_2\sin C_2} \\
            &= c^2 \bigg(\frac{1}{12a_1b_1\sin C_1}+\frac{1}{12a_2b_2\sin C_2}\bigg)+ \frac{\sin(C_1 + C_2)}{4\sin C_1 \sin C_2}
        \end{aligned}
    \end{equation*}
Choosing $c_2 = 2$, we have $\norm{\omega_i^h}_D^2 \le c_2 \norm{\omega_i^h}^2_F$. Since $2\delta_0 \le C_1 + C_2 \le \pi - \delta_\pi$ we have 
\[
\norm{\omega_i^h}_D^2 \ge \min \{(\sin\delta_\pi)/2, (\sin 2\delta_0)/2\} > 0\, .
\]
Naming the constant $\min \{(\sin\delta_\pi)/2, (\sin 2\delta_0)/2\}$ to be $\delta_{0,\pi}$
\[
\frac{\norm{\omega_i^h}_D^2}{\norm{\omega_i^h}_F^2} \ge 
\frac{\delta_{0,\pi}}
 {2c^2((1/12a_1b_1\sin C_1)+1/12a_2b_2\sin C_2)) + \delta_{0,\pi}} \ge 
\frac{\delta_{0,\pi}}{1/(3\sin^2 \delta_0)+ \delta_{0,\pi}}\, .
\]
Choosing 
$c_1 = (3\delta_{0,\pi} \sin^2 \delta_0)/(1+ 3\delta_{0,\pi} \sin^2 \delta_0)$ 
completes the proof. Here both uniformly Delaunay and shape regularity properties are used for lower bound of DEC norm, but for upper bound, we only need shape regularity.
\end{proof}

\begin{lemma}
    Let $\Omega$, $\{X_h\}$, and $X_h$ be as in Lemma~\ref{lem:basis1form-delaunay}. Then 
    for any $\alpha_h \in \whitney{1}{X_h}$ we have that $\norm{\alpha_h}_D^2 \ge c_1\norm{\alpha_h}_F^2$.
\end{lemma}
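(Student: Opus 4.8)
The plan is to exploit the decisive structural feature that distinguishes this setting from the acute case treated in Lemma~\ref{lem:F-lowerbound}: the DEC Hodge star $*_1^D$ is diagonal. Writing $\alpha_h = \sum_i \alpha_i\omega_i^h$ with edge coefficients $\alpha_i = R(\alpha_h)(e_i)$, the cross terms between distinct edges vanish identically in the DEC inner product, so that
\[
\norm{\alpha_h}_D^2 = \sum_i \alpha_i^2\,\frac{|\star e_i|}{|e_i|} = \sum_i \alpha_i^2\,\norm{\omega_i^h}_D^2\, .
\]
Hence only a \emph{basis-wise} comparison is needed on the DEC side, and Lemma~\ref{lem:basis1form-delaunay} supplies exactly that. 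This is important because, unlike the acute case, one cannot reduce the lower bound to a single triangle: on an obtuse triangle $T$ the quantity $\norm{\alpha_h|_T}_D^2$ need not be positive, so a per-triangle lower bound is simply unavailable and the argument must be global.

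The remaining work is to bound $\norm{\alpha_h}_F^2$ from above by the same diagonal sum. First I would use that the FEEC norm is additive over triangles, $\norm{\alpha_h}_F^2 = \sum_{T} \norm{\alpha_h|_T}_F^2$, and that on each triangle only its three local basis forms contribute. Applying the elementary inequality $\norm{\sum_{j\in T}\alpha_j\omega_j}_F^2 \le 3\sum_{j\in T}\alpha_j^2\norm{\omega_j}_F^2$ on each $T$ and summing, while noting that each edge is counted in at most two triangles and that $\norm{\omega_i^h}_F^2 = \sum_{T\ni i}\norm{\omega_i^h|_T}_F^2$ (the basis form being supported on the triangles containing its edge), yields $\norm{\alpha_h}_F^2 \le 3\sum_i \alpha_i^2\norm{\omega_i^h}_F^2$. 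Invoking the upper inequality of Lemma~\ref{lem:basis1form-delaunay} in the form $\norm{\omega_i^h}_F^2 \le c_1^{-1}\norm{\omega_i^h}_D^2$ and combining with the displayed identity gives $\norm{\alpha_h}_F^2 \le (3/c_1)\norm{\alpha_h}_D^2$, that is, the claim with constant $c_1/3$ (which, after relabelling, we continue to call $c_1$).

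The main obstacle here is conceptual rather than computational: because $\norm{\,\cdot|_T\,}_D$ may fail to be a genuine norm on an individual obtuse triangle, the clean triangle-by-triangle summation used in the proof of Lemma~\ref{lem:F-lowerbound} is no longer legitimate, and one must resist localizing the \emph{lower} bound. The diagonality of $*_1^D$ is what rescues the argument, turning the DEC norm into an exact diagonal sum over edges for which only Lemma~\ref{lem:basis1form-delaunay} is required. By contrast the FEEC upper bound is routine: the factor $3$ is combinatorial (three edges per triangle) and the overlap bound (each edge lies in at most two triangles) is a topological property of a $2$-manifold triangulation, so no geometric hypothesis beyond those already built into Lemma~\ref{lem:basis1form-delaunay} is needed for this direction.
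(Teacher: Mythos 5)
Your proof is correct and follows essentially the same route as the paper: both reduce the global lower bound to the basis-wise comparison of Lemma~\ref{lem:basis1form-delaunay}, exploit the diagonality of $*_1^D$ so that $\norm{\alpha_h}_D^2$ is exactly the weighted diagonal sum $\sum_i \alpha_i^2 \norm{\omega_i^h}_D^2$, and bound $\norm{\alpha_h}_F^2$ above by $3\sum_i \alpha_i^2\norm{\omega_i^h}_F^2$ via a local overlap argument before invoking the basis lemma. The only difference is bookkeeping --- the paper gets the overlap factor by AM--GM over pairs of edges sharing a triangle (bounding the valence $|\mathcal{N}(i)|$ by 4), while you get it per triangle from $\norm{x+y+z}_F^2 \le 3(\norm{x}_F^2+\norm{y}_F^2+\norm{z}_F^2)$ and reassemble over edges; your version is, if anything, slightly cleaner, since the paper's tally of cross terms silently omits the diagonal contributions.
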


\begin{proof}
    Let $\alpha_h = \sum_i \alpha^i_h \omega_i$, where $\omega_i$ is the Whitney form basis corresponding to edge $i$ in $X_h$. Let $\mathcal{N}(i)$ be the set of edges other than $i$ itself in the star of $i$ . Thus $\mathcal{N}(i)$ is the set of edges other than $i$ that are in triangles containing $i$. Note that $\langle\omega_i,\omega_j\rangle_F \ne 0$ if and only $j\in \mathcal{N}(i)$. Thus
    \begin{equation*}
        \begin{aligned}
            \norm{\alpha_h}_F^2 &= \big\langle \sum_i \alpha^i_h \omega_i^h, \sum_j \alpha^j_h \omega_j^h \big\rangle_F 
            = \sum_{i,j} \alpha^i_h \alpha^j_h \langle  \omega_i^h,  \omega_j^h \rangle_F 
            = \sum_i \sum_{j \in \mathcal{N}(i)} \alpha^i_h \alpha^j_h \langle  \omega_i^h,  \omega_j^h \rangle_F\, .
        \end{aligned}
    \end{equation*}
    Then by arithmetic mean geometric mean inequality
    \[
    \begin{aligned}
            \norm{\alpha_h}_F^2\le \sum_i \frac{1}{2}
            \bigg(\sum_{j\in \mathcal{N}(i)} \norm{\alpha^i_h \omega_i^h}_F^2 +\norm{\alpha^j_h \omega_j^h}_F^2\bigg) 
            \le 3\sum_i \norm{\alpha^i_h \omega_i^h}_F^2 
            \le \frac{3}{c_1} \norm{\alpha^i_h \omega_i^h}_D^2 \, .
        \end{aligned}  
    \]
The second last inequality holds since $X_h$ is a triangulated surface and thus for each edge $i$ there are at most 4 other edges in $\mathcal{N}(i)$ (2 if $i$ is a boundary edge and 4 otherwise). By updating $c_1$ from previous lemma, we have $\norm{\alpha_h}_D^2 \ge c_1\norm{\alpha_h}_F^2$ for any $\alpha_h \in \whitney{1}{X_h}$. 
\end{proof}

\begin{lemma}
    Let $\Omega$ be as in Lemma~\ref{lem:basis1form-delaunay} and $\{X_h\}$ a shape regular family triangulating $\Omega$. Then for any $\alpha_h \in \whitney{1}{X_h}$ we have  $\norm{\alpha_h}_D^2 \le c_2\norm{\alpha_h}_F^2$, where $c_2$ only depends on $\delta_0$.
\end{lemma}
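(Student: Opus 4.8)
The plan is to adapt the single-triangle upper bound argument of Lemma~\ref{lem:F-upperbound}, the only change being that obtuse triangles must now be accommodated. First I would decompose the DEC norm triangle by triangle. For an edge $e$ opposite an angle $C_e$ in a triangle $T$, the DEC Hodge star contributes $\cos C_e/(2\sin C_e)$, so globally
\[
\norm{\alpha_h}_D^2 = \sum_{T\in X_h}\; \sum_{e\in T}\frac{\cos C_e}{2\sin C_e}\,(\alpha_h^e)^2 = \sum_{T\in X_h}\norm{\alpha_h|_T}_D^2,
\]
where $\alpha_h^e = R(\alpha_h)(e)$. On an obtuse triangle the quantity $\norm{\alpha_h|_T}_D^2$ need not be nonnegative, so this is a genuine signed decomposition rather than a sum of norms; this is exactly the situation anticipated in the discussion of $|\,\norm{\alpha|_T}_D\,|$ at the start of this section.

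Since we want only the upper bound, I would pass to absolute values, $\norm{\alpha_h|_T}_D^2 \le \bigl|\,\norm{\alpha_h|_T}_D^2\,\bigr| \le \sum_{e\in T}\frac{|\cos C_e|}{2\sin C_e}(\alpha_h^e)^2$, and then exploit shape regularity. Every angle of $T$ lies in $[\delta_0,\pi-2\delta_0]$, so $\sin C_e \ge \sin\delta_0$ and $|\cos C_e|\le 1$, giving the clean per-triangle bound $\norm{\alpha_h|_T}_D^2 \le \tfrac{1}{2\sin\delta_0}\norm{\alpha_h|_T}_{l^2}^2$, where $\norm{\cdot}_{l^2}$ denotes the Euclidean norm of the three edge coefficients on $T$.

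The remaining ingredient is to control $\norm{\alpha_h|_T}_{l^2}$ by $\norm{\alpha_h|_T}_F$ with a constant depending only on $\delta_0$. As in Lemma~\ref{lem:F-upperbound}, both the FEEC and DEC $1$-form norms are scale invariant, so I would reduce to triangles of diameter $1$; the difference from the earlier argument is that I replace the compact family of acute diameter-$1$ triangles by the family of shape-regular ones, namely triangles of diameter $1$ all of whose angles are $\ge \delta_0$. This set is compact, and on it the positive-definite quadratic form $\alpha \mapsto \norm{\alpha}_F^2$ attains a positive minimum relative to $\norm{\alpha}_{l^2}^2$, yielding a constant $\epsilon = \epsilon(\delta_0) > 0$ with $\norm{\alpha_h|_T}_F \ge \epsilon\,\norm{\alpha_h|_T}_{l^2}$. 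Summing over $T$ then gives $\norm{\alpha_h}_D^2 \le \tfrac{1}{2\epsilon^2\sin\delta_0}\,\norm{\alpha_h}_F^2$, so that $c_2 = 1/(2\epsilon^2\sin\delta_0)$ depends only on $\delta_0$.

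The one genuinely new point, and the main thing to get right, is the handling of obtuse triangles: because $\norm{\alpha_h|_T}_D^2$ can be negative, one cannot bound it termwise by a norm, and the absolute-value step is what makes the argument work. It is worth emphasizing that neither the Delaunay nor the boundary-acuteness hypotheses enter here---only shape regularity is used---which is precisely why this upper bound holds under weaker geometric assumptions than the lower bound of the previous lemmas.
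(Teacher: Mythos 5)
Your proposal is correct and takes essentially the same route as the paper: the paper's proof simply says the argument is "the same as the acute case," i.e., the scale-invariance, compactness, and $l^2$-comparison argument of Lemma~\ref{lem:F-upperbound}, combined with taking per-triangle absolute values $|\,\norm{\alpha_h|_T}_D^2\,|$ to accommodate obtuse triangles, which is exactly what you spell out. Your explicit replacement of the acute diameter-one family by the compact family of shape-regular diameter-one triangles (all angles $\ge \delta_0$) is the correct reading of the paper's intent and makes the compactness step airtight.
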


\begin{proof}
    The proof is the same as the acute case. For each triangle, $|\;\norm{\alpha_h}_D^2\,| \le c_2 \norm{\alpha_h}_F^2$ only relies on shape regularity. The use of absolute value here allows the DEC norm to be negative on single obtuse triangle only for intermediate calculation.
\end{proof}

Based on the above results, we have that
\begin{theorem}
    Let $\Omega$ be a piecewise linear triangulated surface and $\{X_h\}$ a shape regular, uniformly Delaunay, and uniformly boundary acute family triangulating $\Omega$. Then for all $\alpha_h \in \whitney{1}{X_h}$ there exist constants $c_1, c_2$ independent of $h$ such that
\[
   c_1\norm{\alpha_h}^2_F \le \norm{\alpha_h}^2_D  \le c_2\norm{\alpha_h}^2_F\, .
\]
\end{theorem}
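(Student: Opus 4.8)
The plan is to assemble the claimed two-sided bound directly from the two lemmas that immediately precede it, since each establishes one side of the inequality for an arbitrary element $\alpha_h \in \whitney{1}{X_h}$ rather than merely for basis forms. Concretely, I would first invoke the lower-bound lemma, which gives $\norm{\alpha_h}_D^2 \ge c_1\norm{\alpha_h}_F^2$ with $c_1$ depending only on $\delta_0, \delta_{\pi/2}, \delta_\pi$, and then the upper-bound lemma, which gives $\norm{\alpha_h}_D^2 \le c_2\norm{\alpha_h}_F^2$ with $c_2$ depending only on $\delta_0$. Chaining the two displayed estimates yields the result with constants independent of $h$, and essentially all of the analytic content has already been discharged in the lemmas.

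For the lower bound I would recall the mechanism: expand $\alpha_h = \sum_i \alpha_h^i \omega_i^h$, use that the FEEC Gram matrix is sparse (so $\langle \omega_i^h, \omega_j^h\rangle_F \ne 0$ only when $j \in \mathcal{N}(i)$), bound the cross terms by arithmetic–geometric mean, and exploit that on a triangulated surface each edge has at most four neighbors in its star to reduce everything to the per-basis-form estimate of Lemma~\ref{lem:basis1form-delaunay}. It is worth emphasizing that this is where the uniform Delaunay hypothesis ($C_1 + C_2 \le \pi - \delta_\pi$) and shape regularity are genuinely used, the former to keep $\sin(C_1+C_2)$ bounded away from $0$ and the latter to control the $c^2/(a_i b_i \sin C_i)$ contributions to $\norm{\omega_i^h}_F^2$.

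The one point that requires a little care, and the closest thing to an obstacle, concerns the upper bound. The single-triangle estimate is stated as $|\;\norm{\alpha_h|_T}_D^2\;| \le c_2 \norm{\alpha_h|_T}_F^2$ because on an obtuse triangle the quantity $\tfrac12\cot\theta_{e,T}\,(R(\alpha_h)(e))^2$ can be negative, so the per-triangle DEC expression is not itself a norm. When passing to the whole mesh I would note that the assembled diagonal Hodge star entries $|\star e|/|e|$ are positive precisely because the family is non-degenerate Delaunay and boundary acute, so the global $\norm{\alpha_h}_D^2$ is genuinely nonnegative; hence $\norm{\alpha_h}_D^2 \le \sum_{T} |\,\norm{\alpha_h|_T}_D^2\,| \le c_2 \sum_T \norm{\alpha_h|_T}_F^2 = c_2 \norm{\alpha_h}_F^2$, where the first inequality replaces each signed triangle contribution by its absolute value.

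Finally, I would observe that the upper bound needed only shape regularity, so, unlike the $0$-form case, no curvature-boundedness assumption on the surface $\Omega$ enters here; this is the reason the hypotheses in the statement omit that condition. With both bounds in hand, the theorem is simply their conjunction, and I expect the write-up to be a few lines citing the two lemmas and recording the Delaunay-positivity remark.
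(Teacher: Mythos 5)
Your proposal is correct and matches the paper's own argument: the theorem is proved there exactly by combining the two preceding lemmas (the lower bound for arbitrary $\alpha_h$ via the basis-form estimate and neighbor counting, and the upper bound via shape regularity with the per-triangle absolute-value device). Your additional remark that global positivity of $\norm{\alpha_h}_D^2$ follows from the (uniformly, hence non-degenerate) Delaunay and boundary-acute hypotheses is consistent with how the paper handles the signed per-triangle contributions.
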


\subsection{Norm of Whitney 0-forms and 2-forms}
\begin{figure}[ht]
    \centering
    \begin{subfigure}{0.45\textwidth}
        \centering
        \begin{tikzpicture}[scale=4]

    \coordinate (v0) at (0.0, 0.0);
    \coordinate (v1) at (1.0, 0.0);
    \coordinate (v2) at (0.6, 0.8);
    \coordinate (O) at (0.5, 0.25);

    \coordinate (M_01) at ($ (v0)!0.6!(v1) $);
    \coordinate (M_02) at ($ (v0)!0.5!(v2) $);
  
  
    \draw[thin] (v0) -- (v1) -- (v2) -- cycle;
    \draw[thin] (O) -- (M_01);
    \draw[thin] (O) -- (M_02);

    \draw[thin] (O) -- (v2);
    \draw[thin] (O) -- (v1);
    \draw[thin] (O) -- (v0);

    \node[below] at (v0) {$v_3$};
    \node[above right, xshift=13pt, yshift=-2.5pt] at (v0) {$C_2$};
    \node[above right, xshift=13pt, yshift=11pt] at (v0) {$B_3$};
    \node[below] at (v1) {$v_1$};
    \node[above left, xshift=-2pt, yshift=4pt] at (v1) {$C_5$};
    \node[above left, xshift=-10pt, yshift=-3pt] at (v1) {$B_1$};
    \node[above] at (v2) {$v_5$};
    \node[below left, xshift=-1pt, yshift=-16pt] at (v2) {$C_4$};
    \node[below right, xshift=-4pt, yshift=-16pt] at (v2) {$B_5$};
    \node[above right] at (O) {$v_0$};

    \node[below] at (M_01) {$v_2$};
    \node[above right, xshift=-2pt, yshift=-2pt] at (M_01) {$C_1$};
    \node[above left, xshift=-1pt, yshift=-2pt] at (M_01) {$B_2$};
    \node[above left] at (M_02) {$v_4$};
    \node[below right, xshift=-6pt, yshift=-2pt] at (M_02) {$C_3$};
    \node[below right, xshift=0pt, yshift=9pt] at (M_02) {$B_4$};

\end{tikzpicture}
        \caption{}
        \label{fig:0form-delaunay-internal-vertex}
    \end{subfigure}
    \hfill
    \begin{subfigure}{0.45\textwidth}
        \centering
        \begin{tikzpicture}[scale=4]

    \coordinate (v0) at (0.0, 0.0);
    \coordinate (v1) at (-0.1, 0.4);
    \coordinate (v3) at (0.8, 0.0);
    \coordinate (v2) at (0.48, 0.64);
    \coordinate (v4) at (0.4, -0.32);
  
    \draw[thin] (v0) -- (v3) -- (v2) -- cycle;
    \draw[thin] (v3) -- (v4) -- (v0);
    \draw[thin] (v0) -- (v1) -- (v2);

    \node[below] at (v0) {$v_0$};
    \node[below, xshift=2pt] at (v3) {$v_3$};
    \node[above] at (v2) {$v_2$};
    \node[below] at (v4) {$v_4$};
    \node[left] at (v1) {$v_1$};
    \node[right, yshift=-3pt] at (v1) {$B_1$};
    
    \node[below, xshift=-2pt, yshift=-5pt] at (v2) {$B_2$};
     \node[below left, xshift=-9pt, yshift=-6pt] at (v2) {$C_1$};
    \node[above left, xshift=-3pt] at (v3) {$C_2$};
    \node[below left, yshift=2pt, xshift=-8pt] at (v3) {$B_3$};
    \node[above, xshift=-2pt, yshift=2pt] at (v4) {$C_3$};

\end{tikzpicture}
        \caption{}
        \label{fig:0form-Delaunay-boundary-vertex}
    \end{subfigure}
    \caption{Two examples of meshes (A) $v_0$ internal; and (B) $v_0$ on the boundary.}
    \label{fig:0form-comparison}
\end{figure}
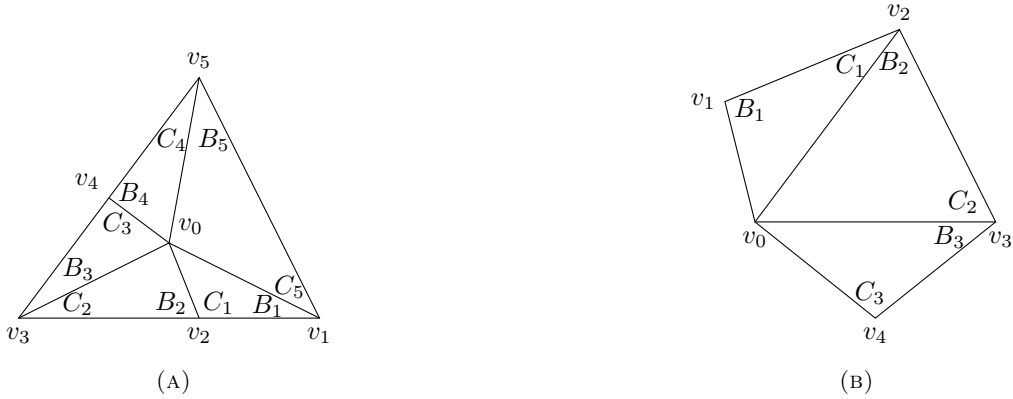
For Whitney 2-forms (piecewise constants) the DEC norm and the FEEC norm coincide. However, the situation is more subtle for 0-forms (continuous Lagrange $\mathcal{P}_1$ elements), especially in the presence of obtuse triangles.

In meshes containing obtuse triangles, the ratio between the dual cell area and the corresponding primal triangle area may no longer lie within $ \left[ \frac{1}{4}, \frac{1}{2} \right] $, which holds for acute triangulations. Nevertheless, under the assumption of the shape regularity we can still establish an upper bound on this ratio for each  triangle. This suffices to prove one direction of the norm equivalence for 0-forms.
To establish the other direction of norm equivalence, we must analyze the behavior of the basis functions across the entire mesh, analogous to the approach used for 1-forms. 

We also impose a condition we call curvature boundedness on the mesh, ensuring that the total angle sum at each vertex does not exceed $N + 2\pi$. This condition guarantees that the valence (i.e., the number of incident triangles) at each vertex remains uniformly bounded across the family of meshes, allowing us to handle non-planar geometries in a controlled manner. This condition is only needed to show that the 0-form DEC norm is bounded below for FEEC norm. It is not needed for the 1-form case.

Let $\St(v_i)$ be the star of vertex $v_i$, that is, the set of all triangles in $X_h$ that contain vertex $v_i$ as one of their vertices. Let $\mu_i^T$ be the dual area of vertex $v_i$ in triangle $T$, and $\mu^T$ the area of triangle $T$.

\begin{lemma}
    Let $T$ be a triangle with angles $A, B, C \ge \delta_0 > 0$, and edge lengths by $a, b, c$. Then there exists a constant $M > 0$ only depending on $\delta_0$, such that $\mu_A^T/\mu^T \le M$.
\end{lemma}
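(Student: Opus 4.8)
The plan is to bound the dual area $\mu_A^T$ of vertex $v_0$ (with angle $A$) inside a single triangle $T$ by the triangle area $\mu^T$, using only the shape-regularity lower bound $\delta_0$ on all angles. The dual cell of $v_0$ in $T$ is the quadrilateral with vertices $v_0$, the midpoint $M_c$ of the edge $c=[v_0,v_1]$, the circumcenter $O$, and the midpoint $M_b$ of the edge $b=[v_0,v_2]$. So I would first compute $\mu_A^T$ explicitly as the sum of the two right triangles $v_0 M_c O$ and $v_0 M_b O$. Using the standard DEC formula, the signed contribution along edge $c$ has area $\tfrac{1}{2}\cdot\tfrac{c}{2}\cdot\tfrac{c}{2}\cot C = \tfrac{c^2}{8}\cot C$, and similarly the contribution along edge $b$ is $\tfrac{b^2}{8}\cot B$. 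Hence
\[
    \mu_A^T = \frac{c^2}{8}\cot C + \frac{b^2}{8}\cot B\, .
\]
Dividing by $\mu^T = \tfrac12 bc \sin A$ and converting side lengths to sines via the law of sines ($a/\sin A = b/\sin B = c/\sin C = 2R$) will turn the ratio into a pure trigonometric expression in the angles $A,B,C$ alone, eliminating the scale and any dependence on $R$.

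The key step is then to show this trigonometric ratio is uniformly bounded above given $A,B,C \ge \delta_0$. After the law-of-sines substitution I expect the ratio to take the form
\[
    \frac{\mu_A^T}{\mu^T} = \frac{\sin^2 C \cot C + \sin^2 B \cot B}{2\sin B \sin C \sin A} = \frac{\sin C \cos C + \sin B \cos B}{2\sin A \sin B \sin C}\, ,
\]
and I would then bound the numerator above by a constant (each term is at most $\tfrac12$) and the denominator below by $2\sin^3\delta_0$, since each of $\sin A, \sin B, \sin C \ge \sin\delta_0 > 0$ once the angles are bounded away from both $0$ and $\pi$ (the upper bound $\pi - 2\delta_0$ on any angle follows from the lower bound $\delta_0$ on the other two). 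This yields $\mu_A^T/\mu^T \le 1/(2\sin^3\delta_0) =: M$, depending only on $\delta_0$.

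The main obstacle is the sign bookkeeping for obtuse triangles: when $C > \pi/2$ the term $\cot C$ is negative, so $\mu_A^T$ is a difference of the two contributions and one must be careful that the \emph{upper} bound still holds (it does, since a negative term only decreases the area, and the positive term alone is controlled by $\delta_0$). Because we only claim an upper bound on the ratio, the signs work in our favour and no Delaunay hypothesis is needed here — shape regularity alone suffices, exactly as the statement asserts. I would remark that the matching lower bound on $\mu_A^T/\mu^T$ is where obtuseness genuinely causes trouble and is handled by the subsequent mesh-wide argument rather than triangle-by-triangle.
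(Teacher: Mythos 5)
Your proposal is correct and takes essentially the same route as the paper's proof: the signed dual-area formula $\mu_A^T = \frac{b^2}{8}\cot B + \frac{c^2}{8}\cot C$, division by $\mu^T$, the law of sines to reduce everything to a trigonometric ratio in $A,B,C$, and then bounding the numerator above and the denominator below using the angle bound $\delta_0$ (the paper in fact bounds $|\mu_A^T|/\mu^T$, which your observation $|\sin x\cos x|\le \tfrac12$ also gives). The only slip is a factor of $2$: after substitution the denominator is $4\sin A\sin B\sin C$, not $2\sin A\sin B\sin C$, which merely changes the value of the constant $M$.
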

\begin{proof}
    By a calculation, we have $\mu_A^T = \frac{b^2}{8\tan B} + \frac{c^2}{8\tan C}$. This is the signed dual area of vertex corresponding to angle $A$ in $T$. See~\cite{HiKaVa2013} for an explanation of signed dual area. Then
    \begin{equation*}
        \frac{|\mu_A^T|}{\mu^T} = 
        \frac{|\frac{b^2}{8\tan B} + \frac{c^2}{8\tan C}|}{2R^2\sin A\sin B \sin C} = 
        \frac{|\sin B\cos B + \sin C \cos C|}{4\sin A\sin B \sin C} 
        \le \frac{2}{4\sin^3 \delta_0}\, .
    \end{equation*}
\end{proof}

\begin{remark}
    This upper bound gives us that $|\,\norm{\alpha}_D^2\,| \le M \,\norm{\alpha}_F^2$ for $\alpha \in \whitney{0}{T}$. Therefore, we have $\norm{\alpha_h}_D^2 \le M \norm{\alpha_h}_F^2$ for $\alpha_h \in \whitney{0}{X_h}$.
\end{remark}

\begin{remark}
    Before we prove other direction, note that one implication of shape regularity is that the ratio of area of two adjacent triangles has an upper bound $M_{\delta_0}$. Using Figure~\ref{fig:delaunay},
    \begin{equation*}
        \frac{\mu_{012}}{\mu_{013}} = 
        \frac{cb_1\sin A_1}{cb_2\sin A_2} = 
        \frac{(c^2/\sin C_1) \sin B_1 \sin A_1}{(c^2/ \sin C_2) \sin B_2 \sin A_2} = 
        \frac{\sin C_2 \sin B_1 \sin A_1}{\sin C_1 \sin B_2 \sin A_2} 
        \le \frac{1}{\sin^3 \delta_0}\, .
    \end{equation*}
Choosing $M_{\delta_0} = 1/(\sin^3 \delta_0)$ gives the conclusion.
\end{remark}

\begin{lemma}
    Let $v_0$ be a vertex in the mesh $X_h$, and $\lambda_0^h$ be the basis 0-form associated with $v_0$. Then there exists a constant $\varepsilon > 0$ only depending on $\delta_0, \delta_\pi, \delta_{\pi/2}$ and $N$, such that $\norm{\lambda_0^h}_D^2 \ge \varepsilon \norm{\lambda_0^h}^2_F$.
\end{lemma}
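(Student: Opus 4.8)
The plan is to reduce both norms to geometric data on $\St(v_0)$ and then to control the signed sum of dual areas edge-by-edge. Since $R(\lambda_0^h)$ is the indicator cochain of $v_0$, the DEC norm collapses to the signed dual cell area, $\norm{\lambda_0^h}_D^2 = |\star v_0| = \sum_{T\in\St(v_0)}\mu_0^T$, where $\mu_0^T$ is the signed dual area of $v_0$ in $T$. The FEEC diagonal $\int_T (\lambda_0^h)^2\,d\mu = \mu^T/6$ gives $\norm{\lambda_0^h}_F^2 = \tfrac16\sum_{T\in\St(v_0)}\mu^T$. Thus the claim is equivalent to an area comparison $\sum_T \mu_0^T \ge \varepsilon'\sum_T \mu^T$ for some positive $\varepsilon'$, which I would establish by bounding both sides against $\sum_{e\ni v_0}|e|^2$.

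The main obstacle is that on an obtuse triangle $\mu_0^T$ can be negative (the circumcenter leaves $T$), so the sum cannot be bounded term by term. I would circumvent this by regrouping the signed sum according to the edges incident to $v_0$. Using the signed dual-area formula $\mu_A^T = \tfrac{b^2}{8\tan B}+\tfrac{c^2}{8\tan C}$ established above, with $b,c$ the edges of $T$ at $v_0$ and $B,C$ the angles opposite them, each internal edge $e\ni v_0$ with opposite angles $\theta,\theta'$ in its two triangles contributes $\tfrac18|e|^2(\cot\theta+\cot\theta')$, while each boundary edge $e\ni v_0$ with opposite angle $\theta$ contributes $\tfrac18|e|^2\cot\theta$. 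The point is that after this regrouping each contribution is positive and uniformly bounded below: for internal edges $\cot\theta+\cot\theta' = \sin(\theta+\theta')/(\sin\theta\sin\theta')$ with $\theta+\theta'\in[2\delta_0,\pi-\delta_\pi]$ by shape regularity and uniform Delaunay, so it is at least $\min\{\sin 2\delta_0,\sin\delta_\pi\}$; for boundary edges uniform boundary acuteness gives $\cot\theta\ge\tan\delta_{\pi/2}$. Hence $\norm{\lambda_0^h}_D^2 \ge \tfrac18\min\{\sin2\delta_0,\sin\delta_\pi,\tan\delta_{\pi/2}\}\sum_{e\ni v_0}|e|^2$.

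To close the estimate I would bound the FEEC side by the same quantity: $\mu^T = \tfrac12 bc\sin A\le\tfrac14(b^2+c^2)$, and since each edge at $v_0$ lies in at most two triangles of $\St(v_0)$, one gets $\sum_T\mu^T\le\tfrac12\sum_{e\ni v_0}|e|^2$. Combining this with $\norm{\lambda_0^h}_F^2 = \tfrac16\sum_T\mu^T$ yields $\norm{\lambda_0^h}_D^2\ge\varepsilon\,\norm{\lambda_0^h}_F^2$ with $\varepsilon = \tfrac32\min\{\sin2\delta_0,\sin\delta_\pi,\tan\delta_{\pi/2}\}$, a constant depending only on $\delta_0,\delta_\pi,\delta_{\pi/2}$ and valid for internal and boundary vertices alike.

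I would expect the regrouping step to be the crux, since it is precisely what turns the uniform Delaunay inequality $\theta+\theta'\le\pi-\delta_\pi$ into termwise positivity and so resolves the sign difficulty. I also note that, because both norms come out comparable to $\sum_{e\ni v_0}|e|^2$, the ratio produced by this argument is independent of the valence of $v_0$; the bounded valence supplied by curvature boundedness could instead be used to compare all triangles in $\St(v_0)$ directly (which is where a dependence on $N$ would enter), but the edge-grouping above seems to deliver the bound without invoking it.
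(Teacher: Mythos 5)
Your proof is correct, and it takes a genuinely different route from the paper's. The paper splits into boundary and interior vertex cases: it uses the Delaunay condition only for \emph{sign cancellation} (internal-edge pairs satisfy $\cot B_i + \cot C_{i+1} > 0$ and are dropped), locates via a longest-edge argument at least one acute triangle in $\St(v_0)$ whose contribution survives, bounds $\norm{\lambda_0^h}_D^2$ below by a constant times that \emph{single} triangle's area $\mu^{T_j}$, and then must compare $\sum_i \mu^{T_i}$ with $\mu^{T_j}$; this last step is where the adjacent-area ratio bound (shape regularity) and the valence bound $n \le (2\pi+N)/\delta_0$ (curvature boundedness) enter, producing the $N$-dependence. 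You instead regroup the signed dual area by edges incident to $v_0$ --- the same algebra the paper performs in its boundary-vertex case --- but then exploit the \emph{uniform} Delaunay constant to get a termwise uniform lower bound $\cot\theta + \cot\theta' = \sin(\theta+\theta')/(\sin\theta\sin\theta') \ge \min\{\sin 2\delta_0, \sin\delta_\pi\}$ for every internal edge (and $\cot\theta \ge \tan\delta_{\pi/2}$ for boundary edges), rather than mere positivity; I checked the regrouping identity, the interval $[2\delta_0,\pi-\delta_\pi]$ for the opposite-angle sums, and the area estimate $\mu^T \le \tfrac14(b^2+c^2)$, and all are sound. Comparing both norms to $\sum_{e\ni v_0}|e|^2$ then closes the argument with no comparison between different triangles of the star, no case distinction, and no use of curvature boundedness: your constant $\varepsilon = \tfrac32\min\{\sin 2\delta_0, \sin\delta_\pi, \tan\delta_{\pi/2}\}$ is independent of $N$ and of the valence of $v_0$, which strictly strengthens the lemma (it holds even for families with unbounded valence). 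Note, however, that curvature boundedness is invoked again in the proof of Theorem~\ref{thm:delaunay-equivalence} when passing from basis 0-forms to general $\alpha_h$ (to bound the number of neighbors of each vertex in the quasi-orthogonality estimate), so your improvement removes $N$ from this lemma's constant but not, by itself, from the hypotheses of the norm-equivalence theorem.
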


\begin{proof}
    Let $n$ be the number of triangles in $\St(v_0)$. We label the triangles clockwise in $\St(v_0)$ to be $T_i$, such that $T_i$ and $T_{i+1}$ are adjacent. (If they surround $v_0$, then $T_n$ and $T_1$ are adjacent.) The angles of $T_i$ are $A_i, B_i, C_i$, where $A_i$ is the angle at $v_0$, and the opposite edge lengths are $a_i, b_i, c_i$. Then we get that dual area of $v_0$, which is $\norm{\lambda_0^h}^2 = \sum_{i=1}^n (\frac{b_i^2}{8\tan B_i} + \frac{c_i^2}{8\tan C_i})$.
    There are two cases to consider for $v_0$. As shown in Figure \ref{fig:0form-comparison}.
    
    \noindent\textbf{Case 1:} $v_0$ is a boundary vertex. In this case, we observe that $b_i = c_{i+1}$, and $B_i$ and $C_{i+1}$ are the angles opposite to this shared edge in the adjacent triangles $T_i$ and $T_{i+1}$, respectively. Moreover, the boundary angles $C_1$ and $B_n$ are less than or equal $\frac{\pi}{2} - \delta_{\pi/2}$, since their corresponding edges lie on the boundary. Then
    \[
    \norm{\lambda_0^h}^2_D = \sum_{i=1}^n \left( \frac{b_i^2}{8\tan B_i} + \frac{c_i^2}{8\tan C_i} \right)
    = \frac{c_1^2}{8\tan C_1} + \frac{b_n^2}{8\tan B_n} + \sum_{i=1}^{n-1} \left( \frac{b_i^2}{8\tan B_i} + \frac{b_i^2}{8\tan C_{i+1}} \right)\,.
    \]
    This gives the inequality
    \[
    \norm{\lambda_0^h}^2_D \ge \frac{c_1^2}{8\tan C_1} + \frac{b_n^2}{8\tan B_n}\,.
    \]
    since $B_i + C_{i+1} < \pi$ and so $(1/\tan B_i) + (1/\tan C_{i+1}) > 0$. We also have 
    \[
    \frac{c_1^2}{8\tan C_1} = \frac{R_1^2 \sin C_1 \cos C_1}{2} \ge \frac{\mu^{T_1}\sin 2C_1}{8} \ge \min\left\{\frac{\sin 2\delta_0}{8}, \frac{\sin2\delta_{\pi/2}}{8}\right\}\,\mu^{T_1}
    \]
    Then we have $\norm{\lambda_0}^2_D  \ge \min\{(\sin 2\delta_0)/8, (\sin2\delta_{\pi/2})/8\}\,\mu^{T_1}$.

    \medskip
    \noindent\textbf{Case 2:} $v_0$ is an interior vertex. In this case, the triangle $T_n$ is adjacent to $T_1$, forming a cyclic patch of $n$ triangles around $v_0$. Let $i$ be the index such that the edge $v_0v_i$ is the longest among all edges incident to $v_0$. We claim that either the pair of angles $(B_i, C_i)$ or $(B_{i+1}, C_{i+1})$ must both be acute.
    
    To see this, note that since $b_i > c_i$ and $b_{i+1} < c_{i+1}$, we have $B_i \ge C_i$ and $C_{i+1} > B_{i+1}$. Moreover, because $B_i + C_{i+1} \le \pi$, at least one of these angles must be acute. Once we identify such a triangle with both adjacent angles acute, its third angle must also be acute (since the sum of the angles in a triangle is $\pi$), implying that this triangle is acute.
    
    It follows that among the $2n$ angles appearing in the DEC norm expression, at most $2n - 2$ of them can be obtuse. Moreover, every obtuse angle appears in a pair of adjacent triangles, and the contribution from each such pair can be canceled using the opposite angle like in case 1. Thus, there are at least two acute angles that remain uncanceled in the summation of the DEC norm, which provides the required lower bound. In summary, there exists a $j$, such that 
    \[
    \norm{\lambda_0^h}^2_D  \ge 
    \min\bigg\{\frac{\sin 2\delta_0}{8}, \frac{\sin2\delta_\pi}{8}, \frac{\sin2\delta_{\pi/2}}{8}\bigg\}\mu^{T_j}\, .
    \]
    And for the FEEC norm of $\lambda_0$ we note that
    \[
    \norm{\lambda_0^h}^2_F = \frac{1}{6}\sum_{i=1}^n \mu^{T_i} \le \frac{1}{6}\sum_{i=1}^n M_{\delta_0}^i \mu^{T_j} \le \hat{M}_{\delta_0} \mu^{T_j}\, .
    \]
    This is true because $n \le (2\pi+N)/\delta_0$. Therefore, we have $\norm{\lambda_0^h}_D^2 \ge \varepsilon \norm{\lambda_0^h}^2_F$. By choosing $\varepsilon = \min\{\frac{\sin 2\delta_0}{8\hat{M_{\delta_0}}}, \frac{\sin2\delta_\pi}{8\hat{M_{\delta_0}}}\}$.    
\end{proof}

\begin{theorem} \label{thm:delaunay-equivalence}
    Let $\Omega$ be a piecewise linear triangle mesh surface and $\{X_h\}$ a shape regular, uniformly Delaunay, uniformly boundary acute and curvature bounded family of meshes triangulating $\Omega$. Then for all $\alpha_h \in \whitney{0}{X_h}$ there exist constants $c_1, c_2$ independent of $h$ such that
    \[
   c_1\norm{\alpha_h}^2_F \le \norm{\alpha_h}^2_D  \le c_2\norm{\alpha_h}^2_F\, .
    \]
\end{theorem}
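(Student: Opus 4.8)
The plan is to prove the two inequalities separately, obtaining the upper bound almost immediately from the single-triangle dual-area estimate already established, and reserving the real work for the lower bound, which must be assembled from the basis-function estimate by controlling cross terms across the whole mesh. Throughout I write $\alpha_h = \sum_i f_i \lambda_i^h$ and use that the DEC Hodge star on $0$-forms is diagonal, so that $\norm{\alpha_h}_D^2 = \sum_i f_i^2 \mu_i$ with $\mu_i = \sum_{T \ni v_i}\mu_i^T$, and hence $\norm{\alpha_h}_D^2 = \sum_{T} \norm{\,\alpha_h|_T\,}_D^2$.

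For the upper bound I would argue triangle by triangle. The preceding lemma gives $|\mu_i^T| \le M \mu^T$, while the fixed $3\times 3$ FEEC stencil (the matrix in~\eqref{eq:0FEECmass} with $\mu = 1$) is positive definite with smallest eigenvalue $\nu_2 > 0$, so that $\norm{\,\alpha_h|_T\,}_F^2 \ge \nu_2 \mu^T \sum_{v_i \in T} f_i^2$. Combining these, $|\,\norm{\,\alpha_h|_T\,}_D^2\,| \le (M/\nu_2)\,\norm{\,\alpha_h|_T\,}_F^2$, and bounding the signed sum of triangle contributions by the sum of their absolute values yields $\norm{\alpha_h}_D^2 \le c_2 \norm{\alpha_h}_F^2$ with $c_2 = M/\nu_2$. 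This is exactly the content already asserted in the remark following that lemma, and it uses only shape regularity.

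For the lower bound I would mimic the $1$-form argument. Expanding $\norm{\alpha_h}_F^2 = \sum_{i,j} f_i f_j \langle \lambda_i^h, \lambda_j^h\rangle_F$ and noting that $\langle\lambda_i^h,\lambda_j^h\rangle_F \ne 0$ only when $v_i$ and $v_j$ lie in a common triangle, I would bound each term by Cauchy--Schwarz followed by arithmetic-geometric-mean, $|f_i f_j \langle\lambda_i^h,\lambda_j^h\rangle_F| \le \tfrac{1}{2}(\norm{f_i\lambda_i^h}_F^2 + \norm{f_j\lambda_j^h}_F^2)$. Each diagonal contribution $\norm{f_i\lambda_i^h}_F^2$ is then counted once for itself and once for each neighboring vertex, so $\norm{\alpha_h}_F^2 \le C_v \sum_i \norm{f_i \lambda_i^h}_F^2$, where $C_v$ is one plus the maximal vertex valence. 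Invoking the basis lemma $\norm{\lambda_i^h}_D^2 \ge \varepsilon \norm{\lambda_i^h}_F^2$ together with $\norm{\lambda_i^h}_D^2 = \mu_i$ gives $\norm{f_i \lambda_i^h}_F^2 \le f_i^2 \mu_i/\varepsilon$, and summing and using diagonality of the DEC norm once more yields $\norm{\alpha_h}_F^2 \le (C_v/\varepsilon)\norm{\alpha_h}_D^2$, i.e. $\norm{\alpha_h}_D^2 \ge c_1 \norm{\alpha_h}_F^2$ with $c_1 = \varepsilon/C_v$.

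The crux of the whole argument is the uniform bound on $C_v$, and this is precisely where curvature boundedness is indispensable. For a planar domain, shape regularity alone controls the valence, since the angles around an interior vertex sum to $2\pi$ and each exceeds $\delta_0$; but on a non-planar triangulated surface the angle sum at a vertex may exceed $2\pi$, and without a lower bound on the angle defect the valence, and hence the number of nonzero cross terms, could blow up along the family $\{X_h\}$. The curvature-bounded hypothesis bounds the angle sum by $2\pi + N$, so the valence satisfies $n \le (2\pi+N)/\delta_0$ uniformly, giving a mesh-independent $C_v$ and closing the argument. This also explains why the condition is unnecessary for $1$-forms, where the number of edges neighboring a given edge is bounded by $4$ purely from $\Omega$ being a triangulated surface.
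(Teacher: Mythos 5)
Your proposal is correct and follows essentially the same route as the paper's proof: the upper bound from the per-triangle signed dual-area bound $|\mu_i^T| \le M\mu^T$ combined with the smallest eigenvalue of the FEEC mass matrix (shape regularity only), and the lower bound by expanding the FEEC norm into cross terms, applying the arithmetic--geometric mean inequality, bounding the vertex valence by $(2\pi+N)/\delta_0$ via curvature boundedness and shape regularity, and then invoking the basis-function estimate $\norm{\lambda_i^h}_D^2 \ge \varepsilon\norm{\lambda_i^h}_F^2$ together with diagonality of the DEC Hodge star. Your closing observation about why curvature boundedness is needed for $0$-forms but not $1$-forms also matches the paper's discussion.
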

\begin{proof}
    We have shown that shape regularity implies $\norm{\alpha_h}^2_D  \le c_2\norm{\alpha_h}^2_F$. For the other inequality let $\alpha_h = \alpha_h^i \lambda_i^h$ and let $\mathcal{N}(i)$ be the set of vertices that are neighbors of $v_i$. These are all the vertices connected to $v_i$ by an edge. Then
    \begin{equation*}
        \begin{aligned}
            \norm{\alpha_h}_F^2 &= 
            \big\langle \sum_i \alpha^i_h \lambda_i^h, \sum_j \alpha^j_h \lambda_j^h 
            \big\rangle_F = 
            \sum_{i,j} \alpha^i_h \alpha^j_h \langle  \lambda_i^h,  \lambda_j^h \rangle_F = 
            \sum_{i} \sum_{j\in \mathcal{N}(i)}\alpha^i_h \alpha^j_h \langle  \lambda_i^h,  \lambda_j^h \rangle_F \\
            &\le \frac{1}{2}\sum_{i}\sum_{j\in\mathcal{N}(i)} 
            \norm{\alpha_h^i\lambda_i^h}^2_F + \norm{\alpha_h^j\lambda_j^h}^2_F\, .
        \end{aligned}
    \end{equation*}
    By the curvature boundedness and shape regularity there are at most $(2\pi+N)/\delta_0$ edges connecting $i$. Thus
    \begin{equation*}
        \norm{\alpha_h}_F^2 \le \bigg(1 + \frac{\pi+N}{\delta_0}\bigg)\sum_{i} \norm{\alpha_h^i\lambda_i^h}^2_F
            \le \bigg(\big(1 + \frac{\pi+N}{\delta_0}\big)/\varepsilon\bigg) \norm{\alpha_h}_D^2\, .
    \end{equation*}
    Thus $c_1\norm{\alpha_h}^2_F \le \norm{\alpha_h}^2_D  \le c_2\norm{\alpha_h}^2_F$ for any $\alpha_h \in \whitney{0}{X_h}$.
\end{proof}
\begin{remark}
    With norm equivalence, the stability and convergence are a direct consequence of the theoretical framework for Hilbert complexes developed by Holst and Stern \cite{Holst_2012}. We can cast both the DEC and FEEC formulations as Hilbert complexes where the identity map serves as the required injective morphism. This condition is met for uniformly Delaunay and shape-regular meshes due to established norm equivalences, thus guaranteeing stability and convergence.
\end{remark}

\section{DEC and FEEC inner products for DEC regular triangulations}\label{sec:ip}

In this section we continue to assume that $\Omega$ is a piecewise linear triangle mesh surface. From here on we will assume that the family of meshes $\{X_h\}$ triangulating $\Omega$ is shape regular and non-degenerate Delaunay. That is, the family $\{X_h\}$ is DEC regular. In the absence of uniform Delaunay condition we will not have norm equivalence of DEC norm and FEEC norm, but shape regularity is enough to show $\norm{\alpha}_D \le c_2 \norm{\alpha}_F$ for Whitney 1-forms $\alpha$ as long as DEC $*_1$ is positive definite, which just requires non-degenerate Delaunay. This is sufficient for comparing DEC and FEEC inner products in this section and for stability and convergence in~\S\ref{sec:convergence}.

We will use the Hilbert complexes notation of~\cite{ArFaWi2010}. That is, we have a Hilbert complex $(W,d)$ and a bounded Hilbert complex $(V,d)$ called the domain complex~\cite[page 301]{ArFaWi2010}. The spaces $V^k$ are the domains of $d^k: V^k \to V^{k+1}$ and are endowed with the inner product associated to the graph norm~\cite[page 301]{ArFaWi2010} as
\begin{equation}\label{eq:FEEC-graph-ip}
    \langle u_h, v_h \rangle_{V^k} = \langle u_h, v_h \rangle_F + \langle d u_h, d v_h \rangle_F\, .
\end{equation}
Where~\cite{ArFaWi2010} write $\langle \,\rule{0.5em}{0.4pt}\,,\, \rule{0.5em}{0.4pt}\rangle_{W^k}$ or $\langle \,\rule{0.5em}{0.4pt}\,,\, \rule{0.5em}{0.4pt}\rangle$ we write $\langle \,\rule{0.5em}{0.4pt}\,,\, \rule{0.5em}{0.4pt}\rangle_F$ to contrast with the DEC version defined next. In~\cite{ArFaWi2010} the spaces $V_h^k$ are finite dimensional subspaces of $V^k$. In this paper we are only interested in $V^k = H\Lambda^k(\Omega)$ and $V_h^k = \whitney{k}{X_h}$ the space of lowest degree Whitney forms. The next definition is the DEC analogue of~\eqref{eq:FEEC-graph-ip}.
\begin{definition}
    For $u_h, v_h \in  V_h^k = \whitney{k}{X_h}$ the space $V_h^k$ is endowed with the inner product associated with the graph norm for DEC as
    \[  
    \langle u_h, v_h \rangle_{V_h^k}^D := \langle u_h, v_h \rangle_D + \langle d u_h, d v_h \rangle_D\, .
    \]
\end{definition}    


The next property we aim to establish is the consistency of the DEC inner product with respect to the FEEC inner product. We remark that a Whitney form on a simplex is constant if and only if it is closed. The technique is to check, simplex by simplex, that the two inner products coincide if both arguments are constant.


A version of Lemma~\ref{lem:ip0} below has been known in the finite element literature for the standard mass lumping for the mass matrix of continuous $\mathcal{P}_1$ Lagrange finite elements. In that mass lumping technique the off diagonal entries are added to the diagonal and replaced by 0. The resulting diagonal mass matrix approximation is different from the corresponding DEC mass matrix for 0-forms $*_0$ which is used in Lemma~\ref{lem:ip0} below.
\begin{lemma}\label{lem:ip0}
    Let $\alpha, \beta \in \whitney{0}{T}$. Then 
    \[
       | \langle \alpha, \beta \rangle_D - \langle \alpha, \beta \rangle_F | \le Ch\norm{\alpha}_V \norm{\beta}_V\, ,
    \]
    where $h$ is the circumradius of $T$, and $C$ is a constant independent of the shape or $h$.
\end{lemma}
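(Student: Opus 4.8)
The plan is to read both inner products as $3\times 3$ mass matrices acting on the vertex values. Writing $f_i=\alpha(v_i)$ and $g_i=\beta(v_i)$, the FEEC inner product is $f^\top M^F g$ with $M^F$ the matrix~\eqref{eq:0FEECmass}, while by~\eqref{eq:dec-ip} the DEC inner product is $f^\top M^D g$ with $M^D=\operatorname{diag}(\mu_0,\mu_1,\mu_2)$, where $\mu_i$ is the signed circumcentric dual area of $v_i$ in $T$. Equivalently, $\langle\alpha,\beta\rangle_D$ is the vertex quadrature rule $Q(\phi)=\sum_i\mu_i\phi(v_i)$ applied to $\phi=\alpha\beta$, so the error functional is the quadrature error $E(\alpha,\beta):=\langle\alpha,\beta\rangle_D-\langle\alpha,\beta\rangle_F=Q(\alpha\beta)-\int_T\alpha\beta\,d\mu$. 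The consistency observation flagged before the lemma is that $Q$ integrates constants exactly, since $\sum_i\mu_i=\mu$; hence $E$ vanishes whenever both $\alpha$ and $\beta$ are constant. This is precisely the hypothesis of a Bramble--Hilbert-type estimate and is the source of the factor $h$.

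To make the gain of one power of $h$ explicit I would split each argument into its mean and its gradient part: write $\alpha=\bar\alpha+\nabla\alpha\cdot(x-x_c)$ with $\bar\alpha$ the average of $\alpha$ over $T$ and $x_c$ the centroid (so $\int_T(x-x_c)\,d\mu=0$ and $\nabla\alpha$ is constant), and similarly for $\beta$. Substituting into $E$ and using $\sum_i\mu_i=\mu$, all purely constant contributions cancel and one is left with the exact identity
\[
  E(\alpha,\beta)=\bar\alpha\,(q\cdot\nabla\beta)+\bar\beta\,(q\cdot\nabla\alpha)+\nabla\alpha^\top D\,\nabla\beta,
\]
where $q:=\sum_i\mu_i(v_i-x_c)$ is the first-moment discrepancy of $Q$ and $D:=\sum_i\mu_i(v_i-x_c)(v_i-x_c)^\top-\int_T(x-x_c)(x-x_c)^\top\,d\mu$ its second-moment discrepancy. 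The scalars $\bar\alpha,\bar\beta$ and the vectors $\nabla\alpha,\nabla\beta$ are controlled cleanly and shape-independently by the graph norm: since $\norm{\alpha}_V^2=\int_T\alpha^2+\mu\,|\nabla\alpha|^2\ge\mu\bar\alpha^2+\mu|\nabla\alpha|^2$, one has $|\bar\alpha|\sqrt\mu\le\norm{\alpha}_V$ and $|\nabla\alpha|\sqrt\mu\le\norm{\alpha}_V$, and likewise for $\beta$.

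Plugging these in reduces the whole statement to two geometric estimates, $|q|\le C\mu h$ and $\norm{D}\le C\mu h^2$. The first yields the leading terms $\bar\alpha(q\cdot\nabla\beta)$ and $\bar\beta(q\cdot\nabla\alpha)$, each bounded by $Ch\,\norm{\alpha}_V\norm{\beta}_V$, and the second makes the $D$-term a higher-order $Ch^2\,\norm{\alpha}_V\norm{\beta}_V$ contribution that is subsumed into the $Ch$ bound because the circumradius is bounded on a fixed $\Omega$. Note that $h$ is the natural length scale here precisely because all three vertices lie on the circumcircle of radius $h$, so the diameter of $T$ is at most $2h$ \emph{even when $T$ is obtuse}.

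The main obstacle is obtaining these two bounds with a constant genuinely independent of the shape of $T$. The naive routes $|q|\le 2h\sum_i|\mu_i|$ and $\norm{D}\le 4h^2\sum_i|\mu_i|+\norm{\Sigma}$ both fail, because for nearly degenerate obtuse triangles the individual signed dual areas blow up, $\sum_i|\mu_i|/\mu\to\infty$. The point is that these large contributions cancel in the signed sums defining $q$ and $D$ — exactly because $Q$ is exact on constants — so that $q$ and $D$ record only the genuine degree-one and degree-two quadrature errors, which are $O(\mu h)$ and $O(\mu h^2)$; a direct computation in the limiting obtuse configuration (one angle tending to $\pi$) confirms finite limiting ratios. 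I would make this rigorous by observing that $q/(\mu h)$ and $D/(\mu h^2)$ are invariant under similarities of $T$, hence are functions of the two angle parameters alone, and then showing they remain bounded in every degenerate limit; boundedness over the compactified shape space then gives a universal $C$. The delicate part is verifying finiteness of these limits in all degenerations (not only $C\to\pi$ but also the needle case of a single vanishing angle). Alternatively, if one is content to let $C$ depend only on the shape-regularity constant $\delta_0$ of the DEC-regular family — which is all the downstream results in~\S\ref{sec:convergence} require — then the estimate $\sum_i|\mu_i|\le 3M\mu$ with $M=M(\delta_0)$ from the signed-dual-area lemma makes both geometric bounds immediate.
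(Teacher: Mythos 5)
Your proposal is correct in substance and takes a genuinely different route from the paper. The paper argues by a three-case bilinearity reduction: exactness when both arguments are constant (using $\sum_i \mu_i = \mu$), a direct computation when one argument is constant (whose key step is rewriting $\sum_i \beta_i(\tfrac13 - \tfrac{\mu_i}{\mu})$ in terms of the differences $\beta_i - \beta_0$), and then the general case via $f(\alpha,\beta) \le f(\alpha - \alpha_0, \beta - \beta_0) + f(\alpha - \alpha_0, \beta_0) + f(\alpha_0, \beta)$ with $\alpha_0, \beta_0$ vertex values. Your mean/gradient splitting is the same underlying idea in a cleaner algebraic form: the exact identity $E(\alpha,\beta) = \bar\alpha\,(q\cdot\nabla\beta) + \bar\beta\,(q\cdot\nabla\alpha) + \nabla\alpha^{\top} D\, \nabla\beta$ packages all three of the paper's cases at once and isolates the geometry into the two moment discrepancies $q$ and $D$. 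What your route buys is transparency about where the factor $h$ comes from and exactly which geometric quantities must be controlled; what it costs is that you must bound $q$ and $D$, which is where you (honestly) stop short of the shape-independent claim.

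On that point, two things. First, the paper's own proof does not achieve the shape-independence asserted in the lemma statement either: its Case 2 invokes ``$|\mu_i/\mu|$ is bounded,'' which is the signed-dual-area estimate $|\mu_i|/\mu \le 2/(4\sin^3\delta_0)$ from the earlier lemma, so the paper's constant also depends on the shape-regularity constant $\delta_0$. Your fallback --- conceding $C = C(\delta_0)$ via $\sum_i |\mu_i| \le 3M\mu$ --- therefore lands at exactly the strength the paper actually proves, and, as you note, is all that Theorem~\ref{thm:ip-error} and \S\ref{sec:convergence} require, since DEC-regularity is assumed there. Second, your compactness argument is not needed for the first moment: $q$ has a closed form. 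Writing the three dual-area pieces as $R^2\sin 2A/4$, etc., and using the barycentric expression for the circumcenter $O$, one finds $\sum_i \mu_i v_i = \tfrac{\mu}{2}(3x_c - O)$, hence $q = \tfrac{\mu}{2}(x_c - O)$; since every $v_i$ lies on the circumcircle, $|x_c - O| \le h$ and so $|q| \le \mu h/2$ with a universal constant --- confirming the cancellation you anticipated (a longer trigonometric computation of the same kind handles $D$). Two small cautions: your claim that the $D$-term is subsumed into $Ch$ ``because the circumradius is bounded on a fixed $\Omega$'' is only valid under shape regularity (an obtuse sliver inside $\Omega$ can have circumradius far exceeding $\operatorname{diam}\Omega$), and the same silent $h^2$-versus-$h$ absorption occurs in the paper's Case 3; under the DEC-regular hypothesis both are harmless.
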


\begin{proof}
    We will consider three cases:
    \begin{enumerate}
        \item both $\alpha, \beta$ are constant;
        \item one of $\alpha, \beta$ is constant; and
        \item neither of $\alpha, \beta$ is constant.
    \end{enumerate}
    Let the vertices of triangle $T$ be $v_0, v_1, v_2$, angles $A, B, C$, dual areas are $\mu_0, \mu_1, \mu_2$, and triangle area $\mu$. As shown in previous section, we know that $\mu_0 + \mu_1 + \mu_2 = \mu,$ and 
    $|\mu_i/\mu|$ is bounded. Let $\alpha, \beta$ on vertex $v_i$ be $\alpha_i, \beta_i$, and $\lambda_i$ the barycentric coordinate associated with $v_i$.

    \noindent\textbf{Case 1}: In this case $\langle \alpha, \beta \rangle_D = \langle \alpha, \beta \rangle_F$. This is because both are equal to $\alpha_0\beta_0\mu$ since
    \[
    \langle \alpha, \beta \rangle_D = \sum_i \alpha_i\beta_i \mu_i = \alpha_0\beta_0\sum_i \mu_i = \alpha_0\beta_0\mu\, ,
    \]
    and
    \[
    \langle \alpha, \beta \rangle_F = \int_T (\alpha_0\lambda_0 +  \alpha_1\lambda_1 + \alpha_2\lambda_2)( \beta_0 \lambda_0+\beta_1 \lambda_1+\beta_2 \lambda_2)= \int_T \alpha_0 \beta_0 = \alpha_0 \beta_0 \mu\, .
    \]
    
    \noindent\textbf{Case 2}: Assume $\alpha$ is constant. Then $\langle \alpha, \beta \rangle_F = \int_T \alpha_0(\beta_0\lambda_0 + \beta_1\lambda_1 + \beta_2\lambda_2) = \frac{\alpha_0\mu}{3}(\beta_0 + \beta_1 + \beta_2)$, and $\langle \alpha, \beta \rangle_D = \alpha_0(\beta_0\mu_0 + \beta_1\mu_1 + \beta_2\mu_2)$. Thus 
    \begin{equation*}
        \begin{aligned}
            | \langle \alpha, \beta \rangle_D - \langle \alpha, \beta \rangle_F | &=
            \big| \alpha_0\mu\sum_i 
            \beta_i\big(\frac{1}{3} - \frac{\mu_i}{\mu}\big)\big| 
            = \mu\;\; |\alpha_0|\; 
            \big|\big((\beta_1-\beta_0)\big(\frac{1}{3} - \frac{\mu_1}{\mu}\big) +
            (\beta_2-\beta_0)\big(\frac{1}{3}- \frac{\mu_2}{\mu}\big)\big)\big| \\
            &= \sqrt{\mu}\;\; \norm{\alpha}_F\;\;  
            \big|((\beta_1-\beta_0)(\frac{1}{3} - \frac{\mu_1}{\mu}) + 
            (\beta_2-\beta_0)(\frac{1}{3} - \frac{\mu_2}{\mu}))\big|\, .
        \end{aligned}
    \end{equation*}
    Then since $|\mu_i/\mu|$ is bounded and FEEC norm is greater than $l_2$ norm for $1$-forms. (Note that the only geometric condition required here is shape regularity.)
    \[
    | \langle \alpha, \beta \rangle_D - \langle \alpha, \beta \rangle_F |
    \le \sqrt{2h^2} \;\norm{\alpha}_F \;C\,\norm{d\beta}_F 
    \le \hat{C} h\, \norm{\alpha}_F\; \norm{d\beta}_F\, .
    \]
    \noindent\textbf{Case 3} As in Case 2, we use $f(\alpha, \beta)$ to denote $| \langle \alpha, \beta \rangle_D - \langle \alpha, \beta \rangle_F |$. Then
    \[
    f(\alpha, \beta) \le f(\alpha-\alpha_0, \beta-\beta_0) + f(\alpha-\alpha_0, \beta_0) + f(\alpha_0, \beta)\, .
    \]
    Since $f(\alpha-\alpha_0, \beta_0)$, $f(\alpha_0, \beta)$ are Case 2,  we  have $f(\alpha-\alpha_0, \beta_0) \le \hat{C}\,h\,\norm{d\alpha}_D\norm{\beta}_D$ and $f(\alpha_0, \beta) \le \hat{C}\,h\,\norm{\alpha}_D\norm{d\beta}_D$. Now we estimate $f(\alpha-\alpha_0, \beta-\beta_0)$, similar to how we deal with $\beta-\beta_0$ in Case 2. We can bound this by $\hat{C}\,h\,\,\norm{d\beta}_D\;\norm{d\alpha}_D$. 
    In summary, for any $\alpha, \beta \in \whitney{0}{T}$ we have $| \langle \alpha, \beta \rangle_D - \langle \alpha, \beta \rangle_F | \le Ch\,\norm{\alpha}_V\;\norm{\beta}_V$.
\end{proof}

The proof for Lemma~\ref{lem:ip1} below is essentially identical to the proof in~\cite{ChHa2011}. 
\begin{lemma} \label{lem:ip1}
    Let $\alpha, \beta \in \whitney{1}{T}$. Then
    \[
       | \langle \alpha, \beta \rangle_D - \langle \alpha, \beta \rangle_F | \le Ch\norm{\alpha}_V\norm{\beta}_V
    \]
    where $h$ is the circumradius of the triangle $T$.
\end{lemma}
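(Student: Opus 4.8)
The plan is to mirror the structure of the 0-form proof in Lemma~\ref{lem:ip0}, reducing the estimate to the case where one or both of $\alpha,\beta$ are constant Whitney 1-forms, and to exploit the remark (stated just before Lemma~\ref{lem:ip0}) that a Whitney form is constant if and only if it is closed. Since $\langle\cdot,\cdot\rangle_D$ and $\langle\cdot,\cdot\rangle_F$ are both bilinear, I would first establish that the two inner products agree exactly when both arguments are constant (closed) 1-forms. The key algebraic identity for this is \eqref{eq:lump}: the DEC Hodge star entry on an edge is precisely $-\int_T\langle d\lambda_i,d\lambda_j\rangle\,d\mu$, which is the mass-lumped version of the FEEC stiffness pairing. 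Because a constant Whitney 1-form on $T$ can be written as a combination $\sum_i a_i\,d\lambda_i$ (equivalently the gradient of a linear function), the DEC and FEEC pairings of two such forms both reduce to the same quadratic form in the coefficients, giving $\langle\alpha,\beta\rangle_D=\langle\alpha,\beta\rangle_F$ in the constant case. This is the analogue of Case~1 above.

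Next I would handle the mixed and general cases by a splitting argument identical in spirit to Cases~2 and~3 of Lemma~\ref{lem:ip0}. Write $\alpha=\alpha_c+\alpha_r$ where $\alpha_c$ is the constant (closed) part and $\alpha_r$ the remainder, and similarly for $\beta$. Letting $f(\alpha,\beta):=|\langle\alpha,\beta\rangle_D-\langle\alpha,\beta\rangle_F|$, bilinearity and the constant-case identity give
\[
f(\alpha,\beta)\le f(\alpha_r,\beta_r)+f(\alpha_r,\beta_c)+f(\alpha_c,\beta_r),
\]
so the entire error is controlled by terms in which at least one argument is a non-constant remainder. The essential point is that the non-constant part of a Whitney 1-form carries a factor of $d\alpha$: on a single triangle the Whitney 1-forms are affine, and the deviation of $\alpha$ from its constant (average) part is proportional to $d\alpha$, which is a constant 2-form. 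A scaling argument shows this deviation scales like $h$ relative to the constant part. I would therefore bound each remainder factor by $Ch\norm{d\alpha}_F$ (respectively $Ch\norm{d\beta}_F$), using shape regularity to control the ratios $|\star\sigma|/|\sigma|$ of dual to primal edge lengths appearing in the DEC star, exactly as $|\mu_i/\mu|$ was controlled in the 0-form proof.

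Assembling these, the cross term $f(\alpha_r,\beta_c)$ is bounded by $Ch\,\norm{d\alpha}_F\norm{\beta}_F$, the term $f(\alpha_c,\beta_r)$ by $Ch\,\norm{\alpha}_F\norm{d\beta}_F$, and the doubly-remainder term $f(\alpha_r,\beta_r)$ by $Ch\,\norm{d\alpha}_F\norm{d\beta}_F$ (this last term in fact carries a higher power of $h$, but $h$ suffices). Since $\norm{d\alpha}_F\le\norm{\alpha}_V$ and $\norm{\alpha}_F\le\norm{\alpha}_V$ by the definition \eqref{eq:FEEC-graph-ip} of the graph norm, every contribution is dominated by $Ch\,\norm{\alpha}_V\norm{\beta}_V$, which is the claim. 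The main obstacle I anticipate is the bookkeeping in the scaling argument for the 1-form case: unlike the 0-form situation, the constant-versus-remainder decomposition of a Whitney 1-form must be set up carefully so that the remainder genuinely picks up the factor $d\alpha$ and the correct power of $h$, and one must verify that the DEC edge weights $|\star\sigma|/|\sigma|$ remain uniformly bounded under shape regularity alone (without needing the full Delaunay or uniform-Delaunay hypotheses). Since the statement attributes the argument to~\cite{ChHa2011}, I expect the decomposition and scaling there can be quoted to streamline this step.
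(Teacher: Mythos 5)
Your proposal follows essentially the same route as the paper's proof: the paper also splits into the cases (both closed, one closed, neither closed), proves exact agreement of the two inner products on closed (constant) 1-forms using the lumping identity~\eqref{eq:lump} together with $d\lambda_0+d\lambda_1+d\lambda_2=0$, and then reduces the mixed and general cases by subtracting closed forms so that the remainder is a multiple of a single Whitney basis form whose coefficient is $\int_T d\beta = O(h\norm{d\beta}_F)$, with shape regularity controlling the DEC weights and $\norm{\omega_i}_F$. The only cosmetic difference is that in the mixed case the paper bounds $|\langle\alpha,\beta\rangle_F|$ and $|\langle\alpha,\beta\rangle_D|$ separately (each being $O(h)$) rather than bounding their difference directly, which is what your splitting accomplishes.
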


\begin{proof}
    The proof is similar to last lemma. We consider three cases:
    \begin{enumerate}
        \item both $\alpha, \beta$ are closed, which means they are exact since it is only one triangle;
        \item one of $\alpha, \beta$ is exact; and
        \item neither of $\alpha, \beta$ is exact.
    \end{enumerate}
    \noindent\textbf{Case 1}: Let $\alpha = d(f_0\lambda_0 + f_1\lambda_1+f_2\lambda_2), \beta = d(g_0\lambda_0 + g_1\lambda_1+g_2\lambda_2)$, then 
    \begin{equation*}
        \begin{aligned}
            \langle \alpha, \beta \rangle_F = 
            \int_T (f_0d\lambda_0 +f_1d\lambda_1+f_2d\lambda_2, g_0d\lambda_0 +g_1d\lambda_1+g_2d\lambda_2) = 
            \int_T \sum_{i,j}f_ig_j\langle d\lambda_i d\lambda_j \rangle 
            = \mu \sum_{i,j} f_ig_jh_{ij}\, ,
        \end{aligned}
    \end{equation*}
    and
    \begin{equation*}
        \begin{aligned}
            \langle \alpha, \beta \rangle_D  &= -(f_1 - f_0)(g_1-g_0)\mu h_{01} 
            -(f_2 - f_1)(g_2-g_1)\mu h_{12} -(f_0 - f_2)(g_0-g_2)\mu h_{20} \\
            &= \sum_{i\ne j}f_ig_j\mu h_{ij} - f_0g_0\mu(h_{01}+h_{02}) - f_1g_1\mu(h_{01}+h_{12}) - f_2g_2\mu(h_{02}+h_{12})
        \end{aligned}
    \end{equation*}
    To show that $\langle \alpha, \beta \rangle_F = \langle \alpha, \beta \rangle_D$, we only need to show that $h_{01}+h_{02} = -h_{00}, h_{01}+h_{12} = -h_{11}, h_{02}+h_{12} = -h_{22}$. This is because $\langle d\lambda_i, d\lambda_0 + d\lambda_1 +d\lambda_2\rangle = \langle d\lambda_i, d1\rangle = 0$.
    
    \noindent\textbf{Case 2}. Assume $\alpha$ is closed, and $\beta$ is not. Let $\alpha = (f_0d\lambda_0 + f_1d\lambda_1+f_2d\lambda_2)$, and $\beta = g_2 (\lambda_0d\lambda_1 - \lambda_1d\lambda_0)$, we can choose such beta because we can minus a closed 1-form $\bar{\beta}$, such that $\beta-\bar{\beta}$ takes $0$ on two other edge. ($\langle \alpha, \beta \rangle_F - \langle \alpha, \beta \rangle_D = \langle \alpha, \beta-\bar{\beta} \rangle_F - \langle \alpha, \beta-\bar{\beta} \rangle_D$).
    Now, we have:
    \begin{equation*}
        \begin{aligned}
            |\langle \alpha, \beta \rangle_F| = |g_2\langle \alpha, \omega_2 \rangle_F | 
            \le \bigg(\int_Td\beta\bigg) \norm{\alpha}_F \; \norm{\omega_2}_F 
            \le C'\sqrt{\mu}\; \norm{d\beta}_F \; \norm{\alpha}_F 
            \le C'\sqrt{2}h\; \norm{d\beta}_F\;\norm{\alpha}_F\,,
        \end{aligned}
    \end{equation*}
    and
    \begin{equation*}
        \begin{aligned}
            |\langle \alpha, \beta \rangle_D| = |g_2(f_1-f_0)h_{01}|  
            \le  \norm{\alpha}_F \; \;\bigg|\int_Td\beta\bigg|  
            \le \norm{\alpha}_F\; \sqrt{2}\,h\;\norm{d\beta}_F \, .
        \end{aligned}
    \end{equation*}
    For the first inequality of DEC norm analysis, we used that $h_{01}$ is bounded, and FEEC norm is greater than $l_2$ norm for $1$ form. Therefore, we have 
    \[
    | \langle \alpha, \beta \rangle_F - \langle \alpha, \beta \rangle_D| \le Ch\,\norm{\alpha}_F\; \norm{d\beta}_F\, .
    \]

    \noindent\textbf{Case 3}: When neither $\alpha, \beta$ are closed, we could use the same method as in 0-form to prove it. We would have
    \[
    |\langle \alpha - \bar{\alpha}, \beta-\bar{\beta} \rangle_F - \langle \alpha - \bar{\alpha}, \beta-\bar{\beta} \rangle_D| 
    \le Ch\,\norm{d\alpha}_F\; \norm{d\beta}_F\, .
    \]
    Thus for any $\alpha, \beta \in \whitney{1}{T}$, 
    \[
    | \langle \alpha, \beta \rangle_D - \langle \alpha, \beta \rangle_F | 
    \le C\,h\;\norm{\alpha}_V\;\norm{\beta}_V\, .
    \]
\end{proof}

\begin{theorem}\label{thm:ip-error}
    Given a family $\{X_h\}$ of DEC-regular triangulations of $\Omega$, let $\alpha_h, \beta_h \in \whitney{k}{X_h}$ for a mesh $X_h$. Then
    \[
    |\langle \alpha_h, \beta_h \rangle_D - \langle \alpha_h, \beta_h \rangle_F | 
    \le Ch\,\norm{\alpha_h}_{V} \;\norm{\beta_h}_{V}\, .
    \] 
\end{theorem}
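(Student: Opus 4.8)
The plan is to pass from the per-triangle inner-product estimates already proved (Lemma~\ref{lem:ip0} for $k=0$, Lemma~\ref{lem:ip1} for $k=1$, and the trivial $k=2$ case) to the global statement over the whole mesh $X_h$ by summing over triangles and invoking a Cauchy--Schwarz argument together with the fact that the graph norm $\norm{\,\cdot\,}_V$ decomposes as a sum over triangles. Since the DEC and FEEC inner products are both local --- each is computed triangle by triangle (the DEC Hodge star is diagonal and each FEEC mass matrix entry is an integral over the triangles containing the relevant simplex) --- I would write
\[
    |\langle \alpha_h, \beta_h \rangle_D - \langle \alpha_h, \beta_h \rangle_F|
    = \Bigl| \sum_{T \in X_h} \bigl( \langle \alpha_h|_T, \beta_h|_T \rangle_D - \langle \alpha_h|_T, \beta_h|_T \rangle_F \bigr) \Bigr|
    \le \sum_{T \in X_h} \bigl| \langle \alpha_h|_T, \beta_h|_T \rangle_D - \langle \alpha_h|_T, \beta_h|_T \rangle_F \bigr|.
\]
Here I must be a little careful about what ``$\langle \,\cdot\,,\,\cdot\,\rangle_D$ restricted to $T$'' means, since the DEC $0$-form and $1$-form Hodge stars are assembled from dual-cell or dual-edge pieces contributed by each incident triangle; the signed-volume decomposition recorded earlier (e.g.\ $\mu_0+\mu_1+\mu_2=\mu$ in the proof of Lemma~\ref{lem:ip0}, and the edge-splitting across the two triangles sharing an internal edge) is exactly what guarantees the global DEC inner product splits into the per-triangle contributions used in those lemmas.

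Next I would apply the per-triangle bound. For each $T$ the cited lemmas give
\[
    \bigl| \langle \alpha_h|_T, \beta_h|_T \rangle_D - \langle \alpha_h|_T, \beta_h|_T \rangle_F \bigr|
    \le C\, h_T \, \norm{\alpha_h|_T}_{V(T)} \, \norm{\beta_h|_T}_{V(T)},
\]
where $h_T$ is the circumradius of $T$ and $C$ is independent of shape and of $h_T$. Crucially $C$ is a \emph{universal} constant by the lemmas, so no uniformity hypothesis beyond what makes the local norms meaningful is needed at this step. Bounding $h_T \le h$ for every $T$ (the mesh parameter) and then applying the discrete Cauchy--Schwarz inequality over the index set of triangles yields
\[
    \sum_{T} C\, h \, \norm{\alpha_h|_T}_{V(T)} \norm{\beta_h|_T}_{V(T)}
    \le C h \Bigl( \sum_T \norm{\alpha_h|_T}_{V(T)}^2 \Bigr)^{1/2}
          \Bigl( \sum_T \norm{\beta_h|_T}_{V(T)}^2 \Bigr)^{1/2}
    = C h\, \norm{\alpha_h}_V \, \norm{\beta_h}_V,
\]
using that the global graph norm is the sum of the squared local graph norms, $\norm{\alpha_h}_V^2 = \sum_T \norm{\alpha_h|_T}_{V(T)}^2$, which holds because both the $L^2$ mass and the $d$-image are computed integral-wise over triangles.

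The main obstacle I anticipate is not the Cauchy--Schwarz assembly, which is routine, but verifying that the per-triangle decomposition of the DEC inner product is legitimate and that the restricted graph norm $\norm{\,\cdot\,|_T\,}_{V(T)}$ used in Lemmas~\ref{lem:ip0} and~\ref{lem:ip1} is genuinely dominated by the global one. Because the DEC $*_0$ and $*_1$ stars allocate dual volumes via \emph{signed} contributions from each triangle, on an individual obtuse triangle the local DEC ``inner product'' need not be positive, so the per-triangle terms are not literally norms; the non-degenerate Delaunay hypothesis (DEC regularity) is what ensures the assembled diagonal star is positive definite and, as noted in~\S\ref{sec:ip}, that $\norm{\alpha_h}_D \le c_2 \norm{\alpha_h}_F$ for $1$-forms, so the intermediate local quantities are controlled in absolute value exactly as the lemma proofs do. I would therefore state the splitting carefully, justify it from the signed-volume additivity already established, invoke shape regularity only where the local lemmas already use it, and then close by noting the $k=2$ case is immediate since there $\langle\,\cdot\,,\,\cdot\,\rangle_D = \langle\,\cdot\,,\,\cdot\,\rangle_F$ and the left-hand side vanishes.
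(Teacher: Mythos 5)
Your proposal is correct and follows essentially the same route as the paper's own proof: decompose the global discrepancy into per-triangle differences, apply Lemmas~\ref{lem:ip0} and~\ref{lem:ip1} (with the trivial $k=2$ case) on each triangle, and close with discrete Cauchy--Schwarz using the triangle-wise additivity of the squared graph norm. Your additional care about the signed-volume splitting of the DEC star and the non-positivity of the local DEC quantity on obtuse triangles is a sound elaboration of a point the paper leaves implicit, but it does not change the argument.
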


\begin{proof}
    Using Lemmas~\ref{lem:ip0} and~\ref{lem:ip1} and the fact that for Whitney 2-forms, the DEC inner product and FEEC inner product are the same we have
    \begin{equation*}
        \begin{aligned}
            \big| \langle \alpha_h, \beta_h \rangle_D - 
            \langle \alpha_h, \beta_h \rangle_F \big| &\le 
            \sum_{T\in X_h} \big|\langle \alpha_h|_T, \beta_h|_T\rangle_D - 
            \langle \alpha_h|_T, \beta_h|_T \rangle_F\big| \le 
            C \,h \sum_T\norm{\alpha_h|_T}_V\;\norm{\beta_h|_T}_V \\
            &\le C\,h\,\big(\sum_T\norm{\alpha_h|_T}_V^2\big)^{1/2}\;
            \big(\sum_T\norm{\beta_h|_T}_V^2\big)^{1/2}\le 
            C\,h \,\norm{\alpha_h}_V\;\norm{\beta_h}_V.
        \end{aligned}
    \end{equation*}
\end{proof}

\begin{remark}
    In fact, we can have a slightly stronger version of this theorem: $|\langle \alpha_h, \beta_h \rangle_D - \langle \alpha_h, \beta_h \rangle_F | 
    \le Ch\,(\norm{\alpha_h}_F \norm{d\beta_h}_F + \norm{d\alpha_h}_F \norm{\beta_h}_F) $, by modifying the conclusion of Lemmas~\ref{lem:ip0} and~\ref{lem:ip1} to the same format, which is evident in the proof.
\end{remark}

\section{Stability and Convergence of DEC}\label{sec:convergence}

For stability and convergence proof of DEC method for~\eqref{eq:Hodge-Laplace-natural} we follow~\cite{ArFaWi2010}. But first, with the results from previous section, we can show a DEC version of Poincar\'e inequality as follows.
\begin{theorem}[DEC Poincar\'e Inequality]\label{thm:PI}
     Given a family $\{X_h\}$ of DEC-regular triangulations of $\Omega$ a piecewise linear triangle mesh surface, we have
     \[
     \norm{v}_V \le \hat{c}_P \norm{dv}_V \quad \text{and} \quad \norm{v}_{V_h}^D \le \hat{c}_P \norm{dv}_{V_h}^D \quad \forall v \in \Z_h^{\perp_D}\, ,
     \]
     where $\hat{c}_P$ is a constant. 
\end{theorem}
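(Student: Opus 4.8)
The plan is to prove the two inequalities in sequence, deriving the DEC-norm version from the FEEC-norm version. The two tools I would use are the standard FEEC discrete Poincar\'e inequality of~\cite{ArFaWi2010}, valid on the \emph{FEEC}-orthogonal complement of the discrete cycles $\Z_h^k$, and the inner-product comparison of Theorem~\ref{thm:ip-error}, which controls $|\langle\cdot,\cdot\rangle_D - \langle\cdot,\cdot\rangle_F|$ by $Ch\norm{\cdot}_V\norm{\cdot}_V$. The degree-$2$ case is trivial (there $dv=0$ and $\Z_h^{\perp_D}=\{0\}$), so I focus on $k=0,1$.

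For the FEEC-norm version, take $v\in\Z_h^{\perp_D}$ and split it FEEC-orthogonally as $v = v_B + v_\perp$ with $v_B\in\Z_h^k$ and $v_\perp\in(\Z_h^k)^{\perp_F}$. Since $dv_B=0$ we have $dv=dv_\perp$, so the FEEC Poincar\'e inequality gives $\norm{v_\perp}_V\le c_P\norm{dv}_V$. It remains to show the closed part $v_B$ is of order $h$. The DEC-orthogonality $\langle v,v_B\rangle_D=0$ yields $\norm{v_B}_D^2 = -\langle v_\perp,v_B\rangle_D$. Because $\langle v_\perp,v_B\rangle_F=0$, Theorem~\ref{thm:ip-error} bounds the right side by $Ch\norm{v_\perp}_V\norm{v_B}_V$, while the same theorem gives the lower bound $\norm{v_B}_D^2 \ge (1-Ch)\norm{v_B}_F^2$ (using $\norm{v_B}_V=\norm{v_B}_F$ as $v_B$ is closed). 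Combining these and cancelling one factor of $\norm{v_B}_F$ gives $\norm{v_B}_F \le \frac{Ch}{1-Ch}\norm{v_\perp}_V$, so by the triangle inequality $\norm{v}_V\le \frac{1}{1-Ch}\norm{v_\perp}_V\le \frac{c_P}{1-Ch}\norm{dv}_V$, which is the claim for $h$ small enough that $Ch<1$.

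For the DEC-norm version I would transfer norms using Theorem~\ref{thm:ip-error} again. Writing out the graph norms and applying the inner-product comparison to $v$ and to $dv$ (recall $\norm{dv}_V=\norm{dv}_F$ and $\norm{dv}_{V_h}^D=\norm{dv}_D$ since $d(dv)=0$) gives the one-sided comparisons $\norm{v}_{V_h}^D\le\sqrt{1+2Ch}\,\norm{v}_V$ and $\norm{dv}_V\le(1-Ch)^{-1/2}\norm{dv}_{V_h}^D$. Chaining these with the FEEC-norm Poincar\'e inequality already established for $v\in\Z_h^{\perp_D}$ yields $\norm{v}_{V_h}^D\le\sqrt{\tfrac{1+2Ch}{1-Ch}}\,\hat{c}_P\,\norm{dv}_{V_h}^D$, and absorbing the $h$-dependent factor into the constant for small $h$ completes the proof.

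The main obstacle is the mismatch between the DEC-orthogonality built into the hypothesis $v\in\Z_h^{\perp_D}$ and the FEEC-orthogonality under which the FEEC Poincar\'e inequality is available: a DEC-orthogonal form need not be FEEC-orthogonal to the cycles. The crucial point is that the discrepancy between the two inner products is only $O(h)$, so the closed component of $v$ in the FEEC decomposition is forced to be $O(h)$ and can be absorbed once $h$ is small. It is worth emphasizing that this argument uses only the inner-product comparison of Theorem~\ref{thm:ip-error} --- in particular the lower bound $\norm{v_B}_D^2\ge(1-Ch)\norm{v_B}_F^2$ --- and never the lower bound of full DEC/FEEC norm equivalence, which is exactly why DEC regularity (non-degenerate Delaunay plus shape regularity) suffices here without the uniform Delaunay condition.
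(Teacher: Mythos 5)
Your proposal is correct and follows essentially the same route as the paper's proof: an FEEC-orthogonal splitting of $v$ into a closed part plus a complement, using the DEC-orthogonality hypothesis together with the $O(h)$ inner-product comparison of Theorem~\ref{thm:ip-error} to force the closed component to be $O(h)$, then the FEEC Poincar\'e inequality, and finally one-sided $O(h)$ norm transfers to get the DEC-graph-norm version. The only cosmetic differences are that the paper bounds the closed part by testing the comparison against $v$ itself (getting $\norm{v_0}_F \le Ch\norm{v}_V$ and absorbing), and invokes the shape-regularity bound $\norm{v}_D \le c_2 \norm{v}_F$ in the second step, whereas you test against the complement $v_\perp$ and derive every one-sided comparison from Theorem~\ref{thm:ip-error} alone.
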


\begin{proof}
    We know that $\Z_h^{\perp_D}$ is different from $\Z_h^{\perp}$, so we assume that $ v = v^{\perp} + v_0$, where $v^{\perp} \in \Z_h^{\perp}$ and $v_0 \in  \Z_h$.
    Using the previous remark, we have 
    \[
    |\langle v, \alpha \rangle_D - \langle v, \alpha \rangle_F| \le Ch(\norm{v}_F\norm{d\alpha}_F + \norm{dv}_F\norm{\alpha}_F) \qquad \forall \alpha \in V_h\, .
    \]
    Taking $\alpha = v_0$
    \[
        \norm{v_0}^2_F \le C\,h\norm{dv^{\perp}}_F\;\norm{v_0}_F.
    \]
    Thus $\norm{v_0}_V \le C\,h\,\norm{dv^{\perp}}_F$ and $\norm{v}_V \le \norm{v_0}_V + \norm{v^{\perp}}_V \le C\,h\,\norm{dv^{\perp}}_F + c_P\,\norm{dv^{\perp}}_V \le (C\,h\, + c_P) \norm{dv^{\perp}}_V$ . Here we have used the FEEC Poincar\'e inequality and the FEEC Poincar\'e constant $c_P$.  Therefore 
    \[
    \norm{v}_V \le (C\,h\, + c_P)\norm{dv}_V\, .
    \]
    Which proves the inequality for FEEC graph norms but using DEC inner product for orthogonal complement. For the result using DEC graph norms we have $\norm{v}_D \le c_2 \norm{v}_F$ and $\norm{dv}_V = \norm{dv}_{V_h}^D$ (This is using the Lemmas~\ref{lem:ip0} and~\ref{lem:ip1} using Case 1) and thus $\norm{v}_{V_h}^D \le \hat{c}_P \norm{dv}_{V_h}^D$. 
\end{proof}

Since FEEC results for two dimensions in~\cite{ArFaWi2010} apply to planar domains from this point on we will assume $\Omega\subset \R^2$. The family $\{X_h\}$ triangulating $\Omega$ will be assumed to be DEC-regular, that is, non-degenerate Delaunay and shape regular. 
Following~\cite{ArFaWi2010} the mixed formulation of the abstract Hodge-Laplace problem is to find $(\sigma, u, p) \in V^{k-1} \times V^k \times \H^k$ satisfying:
\begin{equation}
    \begin{aligned}
        \langle \sigma, \tau \rangle - \langle d\tau,  u \rangle &= 0,  &&\tau \in V^{k-1},  \\
        \langle d\sigma, v \rangle + \langle du,  dv \rangle + \langle v, p \rangle & = \langle f, v \rangle, && v \in V^k, \\
         \langle u, q \rangle & = 0, && q\in \H^k,
    \end{aligned}
\end{equation}

Following~\cite[\S3.2.2]{ArFaWi2010} define a bilinear form in terms of DEC inner product as
\begin{equation}\label{eq:DEC-bilinear}
    B_D(\sigma, u, p;\tau, v, q) = \langle\sigma , \tau \rangle_D - \langle d\tau, u \rangle_D + \langle d\sigma, v \rangle_D + \langle du, dv \rangle_D + \langle v, p \rangle_D\ - \langle u, q \rangle_D\, .
\end{equation}
This is the DEC version of the FEEC bilinear form $B$ from~\cite[\S3.2.2]{ArFaWi2010} which we will refer to as $B_F$. Thus
\[
B_F(\sigma, u, p;\tau, v, q) = \langle\sigma , \tau \rangle_F - \langle d\tau, u \rangle_F + \langle d\sigma, v \rangle_F + \langle du, dv \rangle_F + \langle v, p \rangle_F\ - \langle u, q \rangle_F\, .
\]


As in FEEC we aim to bound the discretization error in terms of the stability of the discretization and the consistency error of DEC method. Our first step is to establish a lower bound on the inf-sup constant, which equivalently provides an upper bound on the stability constant. This ensures the well-posedness of the discrete problem and allows for derivation of error estimates.

\begin{theorem}[DEC stability using FEEC norm and FEEC harmonic forms]\label{thm:inf-sup-FEEC-norm}
    Let $\{X_h\}$ be a family of DEC-regular triangulations of $\Omega$. Then there exists a $\gamma > 0$, depending only on $\delta_0$ and $c_P$ in FEEC Poincar\'e inequality, such that for any $(\sigma, u, p) \in \whitney{k-1}{X_h} \times \whitney{k}{X_h} \times \H_h^k$, when mesh size $h$ is small enough, there exists $(\tau, v, q) \in \whitney{k-1}{X_h} \times \whitney{k}{X_h} \times \H_h^k$ with
    \[
    B_D(\sigma, u, p;\tau, v, q) \ge \gamma (\norm{\sigma}_{V} + \norm{u}_{V} + \norm{p})(\norm{\tau}_{V}+\norm{v}_{V} + \norm{q})
    \]
\end{theorem}
\begin{proof}
    First following~\cite{ArFaWi2010}, use Hodge decomposition, $u = u_{\B_h} + u_{\perp} + u_{\H_h}$, where $u_{\B_h} = P_{\B_h}u$ and $u_{\perp} = P_{\Z_h^{k\perp}}u$, $u_{\H_h} = P_{\H_h}u$.
    Then $u_{\B_h} = d\rho$, where $\rho \in \Z_h^{k-1\perp}$ and thus
    \[\norm{\rho}_V \le c_P \norm{u_{\B_h}}_V, \qquad \norm{u_{\perp}}_V \le c_P\norm{du}_V , \qquad c_P > 1.
    \]
    Choosing $\tau = \sigma - \frac{1}{c_P^2}\rho$, $v = u + d\sigma + p$ and $q= p - u_{\H_h}$ as in~\cite[Theorem 3.2]{ArFaWi2010}
    \[
    (\norm{\tau}_V + \norm{v}_V + \norm{q}_F) \le C(\norm{\sigma}_V + \norm{u}_V+\norm{p}_F)\, .
    \]
    Using the above values of $\tau, v, q$ in definition~\eqref{eq:DEC-bilinear} of $B_D$
    \begin{align*}
        B_D(\sigma, u, p;\tau, v, q)  = &\langle\sigma , \tau \rangle_D - \langle d\tau, u \rangle_D + \langle d\sigma, v \rangle_D + \langle du, dv \rangle_D +\langle v, p \rangle_D\ - \langle u, q \rangle_D \\
            = & \norm{\sigma}^2_D - \frac{1}{c_P^2}\langle\sigma , \rho \rangle_D + \frac{1}{c_P^2}\langle u_{\B_h} , u \rangle_D + \norm{d\sigma}^2_D + \norm{du}^2_D + 2\langle d\sigma, p\rangle_D + \norm{p}_D^2 + \langle u, u_{\H_h} \rangle_D \, .
    \end{align*}
    Here we used the fact that $d\rho = u_{\B_h}$ and $\langle du, dv \rangle_D = \langle du, du\rangle_D$. Then since 
    \begin{equation*}
        \begin{aligned}
            B_D(\sigma, u, p;\tau, v, q)  = &\langle\sigma , \tau \rangle_D - \langle d\tau, u \rangle_D + \langle d\sigma, v \rangle_D + \langle du, dv \rangle_D +\langle v, p \rangle_D\ - \langle u, q \rangle_D \\
            = & \norm{\sigma}^2_D - \frac{1}{c_P^2}\langle\sigma , \rho \rangle_D + \frac{1}{c_P^2}\langle u_{\B_h} , u \rangle_D + \norm{d\sigma}^2_D + \norm{du}^2_D + 2\langle d\sigma, p\rangle_D + \norm{p}_D^2 + \langle u, u_{\H_h} \rangle_D \\
            \ge & \frac{1}{2}\norm{\sigma}^2_D + \norm{d\sigma}^2_D + \norm{du}^2_D - \frac{1}{2c_P^4}\norm{\rho}^2_D + \frac{1}{c_P^2}\langle u_{\B_h} , u \rangle_F - \hat{C}\,h\,\norm{u_{\B_h}}_V\norm{u}_V     \\
            & - 2\hat{C}\,h\,\norm{d\sigma}_V\norm{p}_V + \norm{p}_D^2 + \langle u, u_{\H_h} \rangle_F - \hat{C}\,h\,\norm{u_{\H_h}}_V\norm{u}_V \\
            \ge & \frac{1}{2}\norm{\sigma}^2_D + \norm{d\sigma}^2_D + \norm{du}^2_D - \frac{1}{2c_P^4}\norm{\rho}^2_D + \frac{1}{c_P^2}\norm{u_{\B_h}}^2_F - 2\hat{C}\,h\,\norm{u}_V^2  \\
            & + \norm{p}^2_D + \norm{u_{\H_h}}^2_F - \hat{C}\,h\,\norm{d\sigma}_V^2 - \hat{C}\,h\,\norm{p}_V^2\\
            \ge & \frac{1}{2}\norm{\sigma}^2_D + \norm{d\sigma}^2_D + \norm{du}^2_D + \frac{1}{2c_P^2}\norm{u_{\B_h}}^2_F - \frac{\hat{C}\,h\,}{2c_P^4}\norm{\rho}_V^2 -2\hat{C}\,h\,\norm{u}_V^2  \\
            & + \norm{p}_D^2 + \norm{u_{\H_h}}^2_F - \hat{C}\,h\,\norm{d\sigma}_V^2 - \hat{C}\,h\,\norm{p}_V^2\\
            \ge & \frac{1}{2}\norm{\sigma}^2_F + \norm{d\sigma}^2_F + \frac{1}{2}\norm{du}^2_F + \frac{1}{2c_P^2}\norm{u_{\B_h}}^2_F + \frac{1}{2c_P^2}\norm{u_{\perp}}^2_F - 4\hat{C}\,h\,\norm{u}_V^2 \\
            & + \norm{p}_F^2 + \norm{u_{\H_h}}^2_F - 2\hat{C}\,h\,\norm{d\sigma}_V^2 - 2\hat{C}\,h\,\norm{p}_V^2 - \hat{C}\,h\,(\norm{\sigma}^2_V)\\
            \ge & \frac{1}{2c_P^2} (\norm{\sigma}_V^2 + \norm{u}_V^2 + \norm{p}_V^2) - 4\hat{C}\,h\,(\norm{u}_V^2 + \norm{\sigma}_V^2 + \norm{p}_V^2)\\
        \end{aligned}
    \end{equation*}
    Letting $h \le 1/(16c_P^2\hat{C})$ proves the result.
\end{proof}


From this stability result, we obtain the following error estimate between the DEC and FEEC solutions, for which we require our DEC solution to be orthogonal to the FEEC harmonic space $\H^k$. 
\begin{theorem}[DEC error using FEEC norm and FEEC harmonic forms] \label{thm:error-FEEC-H}
    Let $\{X_h\}$ be a family of DEC-regular triangulations of $\Omega$. Let $(\hat{u}_h, \hat{\sigma}_h, \hat{p}_h) \in \whitney{k-1}{X_h} \times \whitney{k}{X_h} \times \H_h^k$ be the DEC solution and $(u_h, \sigma_h, p_h) \in \whitney{k-1}{X_h} \times \whitney{k}{X_h} \times \H_h^k$  the FEEC solution. Then we have the error estimate:
    \[
    \norm{\hat{\sigma}_h - \sigma_h}_V + \norm{\hat{u}_h - u_h}_V + \norm{\hat{p}_h - p_h}_V \le Ch\big(\norm{\sigma_h}_V + \norm{u_h}_V + \norm{p_h}_F + \norm{f_h}_V\big)\, .
    \]
\end{theorem}

\begin{proof}
    First of all, we have
    \[
    B_D(\hat{\sigma}_h, \hat{u}_h, \hat{p}_h; \tau_h, v_h, q_h) = \langle f_h, v_h\rangle_D, \quad  (\tau_h, v_h, q_h) \in \whitney{k-1}{X_h} \times \whitney{k}{X_h} \times \H_h^k
    \]
    and 
    \[
    B(\sigma_h, u_h, p_h; \tau_h, v_h, q_h) = \langle f_h, v_h\rangle_F, \quad  (\tau_h, v_h, q_h) \in \whitney{k-1}{X_h} \times \whitney{k}{X_h} \times \H_h^k
    \]
    Substituting all FEEC inner product in second equation with DEC inner product, we will have 
    \[
    |B_D(\sigma_h, u_h, p_h; \tau_h, v_h, q_h) - \langle f_h, v_h\rangle_D | \le \hat{C}\,h\, (\norm{\sigma_h}_V + \norm{u_h}_V + \norm{p_h}_F +\norm{f_h}_V)(\norm{\tau_h}_V + \norm{v_h}_V + \norm{q_h}_V)\, .
    \]
    Now we have 
    \begin{equation*}
        \begin{aligned}
            |B_D(\hat{\sigma}_h-\sigma_h, \hat{u}_h &- u_h, \hat{p}_h-p_h; \tau_h, v_h, q_h)| = |B_D(\sigma_h, u_h, p_h; \tau_h, v_h, q_h) - \langle f_h, v_h\rangle_D| \\
            &\le \hat{C}\,h\, (\norm{\sigma_h}_V + \norm{u_h}_V + \norm{p_h}_F + \norm{f_h}_V)(\norm{\tau_h}_V + \norm{v_h}_V + \norm{q_h}_F)\, .
        \end{aligned}
    \end{equation*}
    Theorem \ref{thm:inf-sup-FEEC-norm} then gives
    \[
    \norm{\hat{\sigma}_h - \sigma_h}_{V} + \norm{\hat{u}_h - u_h}_{V} + \norm{\hat{p}_h - p_h}_V\le Ch(\norm{\sigma_h}_V + \norm{u_h}_V + \norm{p_h}_F + \norm{f_h}_V)\, .
    \]
\end{proof}

The above theorems are using FEEC harmonic space instead of DEC harmonic space. But if we want to use DEC harmonic space, we need to establish the following theorem:

\begin{theorem}[DEC and FEEC harmonic forms]\label{thm:harmonic-comparison}
    Let $\hat{\H}_h$ denote the space of DEC harmonic forms, and $\H_h$ the space of FEEC harmonic forms. Then there exists a bijection $\Pi_h:\hat{\H}_h \to\H_h$ such that 
    \begin{equation}\label{eq:dec-feec-harmonic}
    \norm{\Pi_h(\hat{p}) - \hat{p}}_F \le \hat{C}\,h\,\norm{\hat{p}}_F, \qquad \forall ~\hat{p} \in \hat{\H}_h\, .    
    \end{equation}
\end{theorem}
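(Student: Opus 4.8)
The plan is to take for $\Pi_h$ the restriction to $\hat{\H}_h$ of the FEEC ($L^2$-orthogonal) projection $P_{\H_h}$ onto the FEEC harmonic space. Since every $\hat p\in\hat{\H}_h$ lies in $\Z_h^k$ and the FEEC Hodge decomposition gives $\Z_h^k=\B_h^k\oplus_F\H_h^k$, this projection acts simply as $\Pi_h(\hat p)=\hat p-P_{\B_h}\hat p$, where $P_{\B_h}$ is the FEEC projection onto $\B_h^k$. Both harmonic spaces are complements of $\B_h^k$ in $\Z_h^k$ (the DEC one because $*^D$ is positive definite, so $\Z_h^k=\B_h^k\oplus_D\hat{\H}_h$), hence $\dim\hat{\H}_h=\dim\H_h=\dim\Z_h^k-\dim\B_h^k$, the $k$-th Betti number. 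Consequently, to show $\Pi_h$ is a bijection it suffices to prove injectivity.

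First I would verify injectivity. If $\Pi_h(\hat p)=0$, then $\hat p$ has no $\H_h$-component in the FEEC decomposition, so $\hat p\in\B_h^k$. But $\hat p\in\hat{\H}_h\subseteq\B_h^{k\perp_D}$, so $\langle\hat p,\hat p\rangle_D=0$; since the family is non-degenerate Delaunay, the DEC inner product is positive definite (its Hodge star matrices are positive definite), forcing $\hat p=0$. Thus $\Pi_h$ is injective and, by the dimension count, bijective.

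For the estimate, set $b:=P_{\B_h}\hat p$, so that $\Pi_h(\hat p)-\hat p=-b$ and it remains to bound $\norm{b}_F$. The defining property of the FEEC projection gives $\langle\hat p,b\rangle_F=\langle b,b\rangle_F=\norm{b}_F^2$, while DEC-harmonicity of $\hat p$ gives $\langle\hat p,b\rangle_D=0$ since $b\in\B_h^k$. Applying the inner-product consistency bound of Theorem~\ref{thm:ip-error} to $\hat p$ and $b$ then yields
\[
\norm{b}_F^2=\langle\hat p,b\rangle_F=\langle\hat p,b\rangle_F-\langle\hat p,b\rangle_D\le C\,h\,\norm{\hat p}_V\,\norm{b}_V.
\]
The key simplification is that both arguments are closed: $\hat p\in\Z_h^k$ gives $d\hat p=0$, and $b\in\B_h^k=\im d^{k-1}$ gives $db=0$, so $\norm{\hat p}_V=\norm{\hat p}_F$ and $\norm{b}_V=\norm{b}_F$. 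Hence $\norm{b}_F^2\le C\,h\,\norm{\hat p}_F\,\norm{b}_F$, and dividing by $\norm{b}_F$ gives $\norm{b}_F\le C\,h\,\norm{\hat p}_F$, which is exactly~\eqref{eq:dec-feec-harmonic} with $\hat C=C$.

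The only genuinely delicate point is the bijectivity argument, namely confirming that both harmonic spaces are complements of $\B_h^k$ inside $\Z_h^k$ so that the dimension count applies, and that positive definiteness of $*^D$ (guaranteed by non-degenerate Delaunay) rules out a nontrivial kernel. Once $\Pi_h$ is identified as the $L^2$ projection, the estimate itself is short: it is driven entirely by the orthogonality relations $\langle\hat p,b\rangle_D=0$ and $\langle\hat p,b\rangle_F=\norm{b}_F^2$ together with the $O(h)$ consistency of the two inner products, and the closedness of $\hat p$ and $b$ converts all $V$-norms into $F$-norms.
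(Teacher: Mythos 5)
Your proposal is correct and follows essentially the same route as the paper: the map $\Pi_h$ is the same (the $\H_h$-component of the FEEC Hodge decomposition of a DEC harmonic form), and the estimate is driven by the identical pair of orthogonality relations $\langle\hat p,b\rangle_D=0$, $\langle\hat p,b\rangle_F=\norm{b}_F^2$ combined with the $O(h)$ inner-product consistency and the observation that closedness turns $V$-norms into $F$-norms. The only cosmetic difference is that you establish surjectivity by a dimension count (both harmonic spaces being complements of $\B_h$ in $\Z_h$), whereas the paper exhibits a preimage directly via the DEC Hodge decomposition; both rest on the same positive-definiteness of the DEC inner product.
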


\begin{proof}
    We first define $\Pi_h$. For any $\hat{p} \in \hat{\H}_h$, we know that $\hat{p} \in \hat{\Z}_h = \Z_h$ and so $\hat{p} = p_0 + p_{\perp}$, where $p_0 \in \H_h$, and $p_{\perp} \in \B_h = \hat{\B}_h$. Define $\Pi_h(\hat{p}) = p_0$. This is well defined because of the uniqueness of the decomposition $\hat{p} = p_0 + p_{\perp}$. 
    
    To show that this map is a bijection first note that if $\Pi_h(\hat{p}) = 0$, then by definition of $\Pi_h$, $\hat{p} \in \B_h$, which implies that $\hat{p} = 0 \in \hat{\Z}_h$. Thus $\Pi_h$ is injective. For surjectivity first note that for any
    $\bar{p}\in \H_h$, $\bar{p} = \hat{\bar{p}} + \bar{p}_{\perp}$, where $\hat{\bar{p}} \in \hat{\Z}_h$ and $\bar{p}_{\perp} \in \B_h$. Then $\Pi_h(\hat{\bar{p}}) = \bar{p}$, since $\hat{\bar{p}} = \bar{p} - \bar{p}_\perp$. Thus $\Pi_h$ is surjective.

    To show~\eqref{eq:dec-feec-harmonic} denote $\Pi_h(\hat{p}) - \hat{p}$ by $p_\perp$. By Lemma~\ref{lem:ip1}
    \[
    \norm{p_\perp}_F^2= |\langle \hat{p}, p_\perp\rangle_F - \langle \hat{p}, p_\perp \rangle_D| \le \hat{C}\,h\,\norm{\hat{p}}_V\;\norm{p_\perp}_V\, .
    \]
    Then we have $\norm{p_\perp}_F \le \hat{C}\,h \, \norm{\hat{p}}_F$, which proves the result.
\end{proof}

With Theorem~\ref{thm:harmonic-comparison} and DEC Poincar\'e inequality, we can prove the following stability theorem using DEC harmonic forms. We state the theorem without proof since the proof is exactly the same as the proof of \cite[Theorem 3.8]{ArFaWi2010} with FEEC Hodge decomposition replaced by DEC Hodge decomposition. 

\begin{theorem}[DEC stability using DEC norm and DEC harmonic forms]
\label{thm:inf-sup-DEC-norm}
    Let $\{X_h\}$ be a family of DEC-regular triangulations of $\Omega$. Then there exists a $\gamma > 0$, depending only on $\delta_0$ and $c_P$ in FEEC Poincar\'e inequality, such that for any $(\sigma, u, p) \in \whitney{k-1}{X_h} \times \whitney{k}{X_h} \times \hat{\H}_h^k$, when mesh size $h$ is small enough, there exists $(\tau, v, q) \in \whitney{k-1}{X_h} \times \whitney{k}{X_h} \times \hat{\H}_h^k$ with
    \[
    B_D(\sigma, u, p;\tau, v, q) \ge \gamma\, (\norm{\sigma}_{V_h}^D + \norm{u}_{V_h}^D + \norm{p})\,(\norm{\tau}_{V_h}^D+\norm{v}_{V_h}^D + \norm{q})\, .
    \]
\end{theorem}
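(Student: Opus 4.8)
The plan is to recognize that the pair $(V_h, d)$ equipped with the DEC inner product $\langle \cdot, \cdot \rangle_D$ is a finite-dimensional Hilbert complex meeting all the abstract hypotheses of the Arnold--Falk--Winther stability theory, and then to replay the proof of \cite[Theorem 3.8]{ArFaWi2010} line by line, with the FEEC inner product, the FEEC orthogonality, and the FEEC Hodge decomposition everywhere replaced by their DEC counterparts. Because $V_h$ is finite dimensional, $d$ automatically has closed range, so the DEC-orthogonal Hodge decomposition $V_h^k = \hat{\B}_h^k \oplus_D \hat{\H}_h^k \oplus_D \hat{\Z}_h^{k\perp_D}$ is available; note also that the kernels and ranges coincide, $\hat{\Z}_h = \Z_h$ and $\hat{\B}_h = \B_h$, since $d$ is the same operator in both theories and only the orthogonal complements differ. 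The single quantitative input needed to make the inf-sup constant uniform in $h$ is the DEC Poincar\'e inequality in the DEC graph norm, $\norm{v}_{V_h}^D \le \hat{c}_P \norm{dv}_{V_h}^D$ for $v \in \Z_h^{\perp_D}$, which is precisely the second inequality of Theorem~\ref{thm:PI}.

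Concretely, given $(\sigma, u, p)$ I would first apply the DEC Hodge decomposition to $u$, writing $u = u_{\hat{\B}_h} + u_{\perp_D} + u_{\hat{\H}_h}$ with $u_{\hat{\B}_h} = d\rho$ for some $\rho \in \Z_h^{\perp_D}$ in degree $k-1$, and invoke Theorem~\ref{thm:PI} to obtain $\norm{\rho}_{V_h}^D \le \hat{c}_P \norm{u_{\hat{\B}_h}}_{V_h}^D$ and $\norm{u_{\perp_D}}_{V_h}^D \le \hat{c}_P \norm{du}_{V_h}^D$. I would then select the standard AFW test triple $\tau = \sigma - \hat{c}_P^{-2}\rho$, $v = u + d\sigma + p$, and $q = p - u_{\hat{\H}_h}$, which by the Poincar\'e bounds satisfies the upper estimate $\norm{\tau}_{V_h}^D + \norm{v}_{V_h}^D + \norm{q} \le C(\norm{\sigma}_{V_h}^D + \norm{u}_{V_h}^D + \norm{p})$.

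The final step is to expand $B_D(\sigma, u, p; \tau, v, q)$ in the DEC inner product and collect terms exactly as in AFW: the DEC orthogonality relations make the three decomposition pieces of $u$ interact cleanly, the term $\langle d\sigma, p\rangle_D$ cancels against the harmonic contribution, and the Poincar\'e-controlled cross term $-\hat{c}_P^{-2}\langle \sigma, \rho\rangle_D$ is absorbed by Young's inequality, leaving a coercive lower bound of the form $\gamma\,[(\norm{\sigma}_{V_h}^D)^2 + (\norm{u}_{V_h}^D)^2 + \norm{p}^2]$. Combined with the upper bound on the test triple, this yields the stated inf-sup inequality with $\gamma$ depending only on $\hat{c}_P$, and hence, via Theorems~\ref{thm:PI} and~\ref{thm:ip-error}, only on $\delta_0$ and the FEEC Poincar\'e constant $c_P$.

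The main obstacle here is conceptual rather than computational: one must confirm that the DEC framework genuinely supplies a self-consistent Hilbert-complex structure with the same differential $d$, a DEC-orthogonal Hodge decomposition, and an $h$-uniform DEC Poincar\'e inequality, so that no FEEC-specific property is silently invoked in the AFW manipulation. Once this is verified, the algebraic work is identical to the FEEC case and presents no new difficulty; the genuinely hard analytic content, namely the $h$-uniformity of the Poincar\'e constant and the inner-product comparison that underlies it, has already been discharged in Theorem~\ref{thm:PI} and Theorem~\ref{thm:ip-error}.
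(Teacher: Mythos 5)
Your proposal is correct and takes essentially the same route as the paper: the paper states this theorem without proof, remarking that it is obtained by repeating the proof of \cite[Theorem 3.8]{ArFaWi2010} with the FEEC Hodge decomposition replaced by the DEC Hodge decomposition, which is precisely your plan, with the $h$-uniform DEC Poincar\'e inequality (the second inequality of Theorem~\ref{thm:PI}) supplying the only quantitative input. Your observation that the whole argument runs inside the DEC Hilbert complex $(V_h, d, \langle\cdot,\cdot\rangle_D)$ --- so that orthogonality relations such as $\langle d\sigma, p\rangle_D = 0$ hold exactly and no DEC--FEEC inner-product comparison is needed inside the expansion of $B_D$ --- is exactly why the paper can omit the details.
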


We also have the convergence theorem with regards to DEC harmonic forms.
\begin{theorem}[DEC error using FEEC norm and DEC harmonic forms] \label{thm:error-DEC-H}
    Let $\{X_h\}$ be a family of DEC regular triangulations of $\Omega$. Let $(\hat{u}_h, \hat{\sigma}_h, \hat{p}_h)$ be the DEC solution and $(u_h, \sigma_h, p_h)$  the FEEC solution. Then we have the error estimate:
    \[
    \norm{\hat{\sigma}_h - \sigma_h}_V + \norm{\hat{u}_h - u_h}_V + \norm{\hat{p}_h - p_h}_F \le C\,h\,\big(\norm{\sigma_h}_V + \norm{u_h}_V + \norm{p_h}_F + \norm{f_h}_V\big)\, .
    \]
\end{theorem}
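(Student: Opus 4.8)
The plan is to run the same variational-crime argument as in the proof of Theorem~\ref{thm:error-FEEC-H}, but with the added complication that the DEC solution now carries its harmonic part in the DEC harmonic space $\hat{\H}_h^k$, whereas the stability estimate of Theorem~\ref{thm:inf-sup-FEEC-norm} is phrased over the FEEC harmonic space $\H_h^k$. Since for DEC-regular families only the one-sided bound $\norm{\cdot}_D \le c_2\norm{\cdot}_F$ is available, I cannot convert a DEC-norm error estimate into the FEEC-norm bound claimed here; I must therefore route the whole argument through the FEEC-norm inf-sup of Theorem~\ref{thm:inf-sup-FEEC-norm}, and the bijection $\Pi_h$ of Theorem~\ref{thm:harmonic-comparison} is the tool that reconciles the two harmonic spaces.

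First I would align the harmonic components. Setting $\bar{p}_h := \Pi_h(\hat{p}_h) \in \H_h^k$, I consider the error triple $(\hat{\sigma}_h - \sigma_h,\ \hat{u}_h - u_h,\ \bar{p}_h - p_h)$, which now lives in $\whitney{k-1}{X_h}\times\whitney{k}{X_h}\times\H_h^k$ and is thus admissible for Theorem~\ref{thm:inf-sup-FEEC-norm}. Applying that theorem yields a test triple $(\tau,v,q)$ with $q\in\H_h^k$ for which $B_D(\hat{\sigma}_h-\sigma_h,\hat{u}_h-u_h,\bar{p}_h-p_h;\tau,v,q)$ bounds the sum of FEEC norms of the error from below, up to the factor $\norm{\tau}_V+\norm{v}_V+\norm{q}_F$. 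Because $\bar{p}_h-p_h$ is closed, its $V$-norm equals its $F$-norm, so the harmonic part of the error is measured in exactly the norm appearing in the statement.

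Next I would bound the residual $B_D(\hat{\sigma}_h-\sigma_h,\hat{u}_h-u_h,\bar{p}_h-p_h;\tau,v,q)$ by $O(h)$. Splitting it by bilinearity into $B_D(\hat{\sigma}_h,\hat{u}_h,\bar{p}_h;\tau,v,q)-B_D(\sigma_h,u_h,p_h;\tau,v,q)$, the FEEC-solution part is converted to $B_F(\sigma_h,u_h,p_h;\tau,v,q)=\langle f_h,v\rangle_F$ using Theorem~\ref{thm:ip-error} term by term, at the cost of an $O(h)$ remainder. For the DEC-solution part I would compare $B_D(\hat{\sigma}_h,\hat{u}_h,\bar{p}_h;\tau,v,q)$ with the genuine DEC equation tested against $(\tau,v,q')$ where $q':=\Pi_h^{-1}(q)\in\hat{\H}_h^k$, which equals $\langle f_h,v\rangle_D$. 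The only discrepancies are the penalty term $\langle v,\bar{p}_h-\hat{p}_h\rangle_D=\langle v,\Pi_h(\hat{p}_h)-\hat{p}_h\rangle_D$ and the constraint term $\langle\hat{u}_h,q-q'\rangle_D=\langle\hat{u}_h,\Pi_h(q')-q'\rangle_D$; both are $O(h)$ by Theorem~\ref{thm:harmonic-comparison} combined with Cauchy--Schwarz and $\norm{\cdot}_D\le c_2\norm{\cdot}_F$, the boundedness of $\Pi_h^{-1}$ for small $h$ ensuring that $\norm{q'}_F\lesssim\norm{q}_F$ so that only $\norm{q}_F$ survives on the right. Collecting these, together with $\langle f_h,v\rangle_D-\langle f_h,v\rangle_F=O(h)$ from Theorem~\ref{thm:ip-error}, bounds the residual by $Ch(\norm{\sigma_h}_V+\norm{u_h}_V+\norm{p_h}_F+\norm{f_h}_V)(\norm{\tau}_V+\norm{v}_V+\norm{q}_F)$, after absorbing the stray DEC-solution quantities $\norm{\hat{u}_h}_F$ and $\norm{\hat{p}_h}_F$ into the left-hand side for $h$ small, since they differ from the corresponding FEEC quantities by the error itself, which already carries a factor $h$.

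Finally I would cancel the common factor $\norm{\tau}_V+\norm{v}_V+\norm{q}_F$ against the inf-sup lower bound, and then replace $\bar{p}_h-p_h$ by $\hat{p}_h-p_h$ through $\norm{\hat{p}_h-\bar{p}_h}_F=\norm{\hat{p}_h-\Pi_h(\hat{p}_h)}_F\le\hat{C}h\norm{\hat{p}_h}_F$ of Theorem~\ref{thm:harmonic-comparison}, once more using a small-$h$ absorption to express the final bound purely in terms of the FEEC data. The main obstacle is precisely the bookkeeping of this residual step: guaranteeing that the mismatch between $\hat{\H}_h^k$ and $\H_h^k$ in both the penalty term $\langle v,p\rangle_D$ and the constraint term $\langle u,q\rangle_D$ contributes genuinely at order $O(h)$ rather than $O(1)$, which is exactly what the quantitative closeness $\norm{\Pi_h(\hat{p})-\hat{p}}_F\le\hat{C}h\norm{\hat{p}}_F$ of Theorem~\ref{thm:harmonic-comparison} provides.
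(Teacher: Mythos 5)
Your proposal is correct, and it reaches the estimate by a route that differs from the paper's in one substantive choice: which stability result anchors the argument. You keep the error residual in the DEC bilinear form $B_D$ and apply the paper's own DEC stability theorem (Theorem~\ref{thm:inf-sup-FEEC-norm}) to the triple $(\hat{\sigma}_h-\sigma_h,\,\hat{u}_h-u_h,\,\Pi_h(\hat{p}_h)-p_h)$, reconciling the two discrete problems by testing the DEC equations against $(\tau,v,\Pi_h^{-1}(q))$ and paying the two harmonic-mismatch terms $\langle v,\Pi_h(\hat{p}_h)-\hat{p}_h\rangle_D$ and $\langle \hat{u}_h,\,q-\Pi_h^{-1}(q)\rangle_D$, each $O(h)$ by Theorem~\ref{thm:harmonic-comparison}, Cauchy--Schwarz in the DEC inner product, and the one-sided bound $\norm{\cdot}_D\le c_2\norm{\cdot}_F$. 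The paper goes the opposite way: it replaces every DEC inner product in the DEC equations by the FEEC one at $O(h)$ cost (Theorem~\ref{thm:ip-error}), so the error is measured with the FEEC form $B$ applied to $(\hat{\sigma}_h-\sigma_h,\,\hat{u}_h-u_h,\,\Pi_h(\hat{p}_h)-p_h;\,\tau,v,\Pi_h(\hat{q}_h))$, and then invokes the classical FEEC inf-sup condition \cite[Theorem 3.8]{ArFaWi2010} rather than Theorem~\ref{thm:inf-sup-FEEC-norm}, using $\norm{\Pi_h(\hat{q}_h)}_V\ge(1-\hat{C}h)\norm{\hat{q}_h}_V$ to control the test harmonic form. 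The two arguments share the same skeleton (consistency via Theorem~\ref{thm:ip-error}, harmonic-space bridging via $\Pi_h$, small-$h$ absorption, and the final substitution of $\norm{\hat{p}_h-\Pi_h(\hat{p}_h)}_F\le \hat{C}h\norm{\hat{p}_h}_F$), and both must absorb the DEC-solution norms $\norm{\hat{u}_h}_V$, $\norm{\hat{p}_h}_F$ into the left-hand side. What your route buys is uniformity: it is the natural continuation of the proof of Theorem~\ref{thm:error-FEEC-H}, reusing the same stability theorem and the same bilinear form, so both error theorems follow from one stability statement. What the paper's route buys is a cleaner dependency structure: the final estimate rests on the established FEEC stability theory of \cite{ArFaWi2010} rather than on the paper's own DEC inf-sup theorem, so any sharpening of the FEEC constants transfers directly.
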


\begin{proof}
    First of all, we have
    \[
    B_D(\hat{\sigma}_h, \hat{u}_h, \hat{p}_h; \tau_h, v_h, \hat{q}_h) = \langle f_h, v_h\rangle_D, \quad  (\tau_h, v_h, \hat{q}_h) \in \whitney{k-1}{X_h} \times \whitney{k}{X_h} \times \hat{\H}_h^k
    \]
    and 
    \[
    B(\sigma_h, u_h, p_h; \tau_h, v_h, q_h) = \langle f_h, v_h\rangle_F, \quad  (\tau_h, v_h, q_h) \in \whitney{k-1}{X_h} \times \whitney{k}{X_h} \times \H_h^k
    \]
    Substituting all DEC inner product in first equation with FEEC inner product, we will have 
    \begin{equation*}
\begin{aligned}
    \big| B(\hat{\sigma}_h, \hat{u}_h, \hat{p}_h; \tau_h, v_h, \hat{q}_h) &- \langle f_h, v_h\rangle_F \big| \\
    &= \left| B(\hat{\sigma}_h, \hat{u}_h, \hat{p}_h; \tau_h, v_h, \hat{q}_h) - B_D(\hat{\sigma}_h, \hat{u}_h, \hat{p}_h; \tau_h, v_h, \hat{q}_h) 
    - \langle f_h, v_h\rangle_F + \langle f_h, v_h\rangle_D \right| \\
    &\le 
    \left| \langle\hat{\sigma}_h , \tau_h \rangle_D - \langle\hat{\sigma}_h , \tau_h \rangle_F \right|
    + \left| \langle \hat{u}_h , d\tau_h \rangle_D - \langle \hat{u}_h , d\tau_h \rangle_F \right| \\
    \quad &+ \left| \langle d\hat{\sigma}_h , v_h \rangle_D - \langle d\hat{\sigma}_h , v_h \rangle_F \right|
    + \left| \langle f_h, v_h\rangle_F - \langle f_h, v_h\rangle_D \right| \\
    \quad &+ \left| \langle d\hat{u}_h , dv_h \rangle_D - \langle d\hat{u}_h , dv_h \rangle_F \right|
    + \left| \langle \hat{p}_h , v_h \rangle_D - \langle \hat{p}_h , v_h \rangle_F \right| \\
    \quad &+ \left| \langle \hat{u}_h , \hat{q}_h \rangle_D - \langle \hat{u}_h , \hat{q}_h \rangle_F \right| \\
    &\le \hat{C}\,h \left( \norm{\hat{\sigma}_h}_V + \norm{\hat{u}_h}_V + \norm{\hat{p}_h}_F + \norm{f_h}_V \right)
    \left( \norm{\tau_h}_V + \norm{v_h}_V + \norm{\hat{q}_h}_V \right)\, .
\end{aligned}
\end{equation*}
    Then we have
    \begin{equation*}
        \begin{aligned}
            |B(\hat{\sigma}_h-\sigma_h, \hat{u}_h &- u_h, \Pi_h(\hat{p}_h)-p_h; \tau_h, v_h, \Pi_h(\hat{q}_h))| \le |B(\hat{\sigma}_h, \hat{u}_h, \hat{p}_h; \tau_h, v_h, \hat{q}_h) - \langle f_h, v_h\rangle_F | \\
            & + |\langle v, \Pi_h(\hat{p}_h) - p_h\rangle_F| + |\langle u, \hat{q}_h - \Pi_h(\hat{q}_h) \rangle_F| \\ 
            &\le 2\hat{C}\,h (\norm{\hat{\sigma}_h}_V + \norm{\hat{u}_h}_V + \norm{\hat{p}_h}_F + \norm{f_h}_V)(\norm{\tau_h}_V + \norm{v_h}_V + \norm{\hat{q}_h}_F) \, .
        \end{aligned}
    \end{equation*}
    By Theorem~\ref{thm:harmonic-comparison}, we have $\norm{\Pi_h(\hat{q}_h)}_V \ge (1-\hat{C}\,h) \norm{\hat{q}_h}_V $. When $h<1/(2\hat{C})$, we have $\norm{\Pi_h(\hat{q}_h)}_V \ge \frac{1}{2}\norm{\hat{q}_h}_V$. With this and \cite[Theorem 3.8]{ArFaWi2010}, we have 
    \begin{equation*}
        \begin{aligned}
            \norm{\hat{\sigma}_h - \sigma_h}_V 
            &+ \norm{\hat{u}_h - u_h}_V 
            + \norm{\hat{p}_h - p_h}_V 
            \le \norm{\hat{\sigma}_h - \sigma_h}_V 
            + \norm{\hat{u}_h - u_h}_V 
            + \norm{\Pi_h(\hat{p}_h) - p_h}_V 
            + \hat{C}\,h \norm{\hat{p}_h}_F \\
            &\le \frac{4\hat{C}\,h}{\gamma} 
            \left( \norm{\hat{\sigma}_h}_V 
            + \norm{\hat{u}_h}_V 
            + \norm{\hat{p}_h}_F 
            + \norm{f_h}_V \right) \\
            &\le \frac{4\hat{C}\,h}{\gamma} 
            \left( \norm{\hat{\sigma}_h - \sigma_h}_V 
            + \norm{\hat{u}_h - u_h}_V 
            + \norm{\hat{p}_h - p_h}_V 
            + \norm{\sigma_h}_V 
            + \norm{u_h}_V 
            + \norm{p_h}_F 
            + \norm{f_h}_V \right)\, .
        \end{aligned}
    \end{equation*}
    Then for $h \le \gamma/(8\hat{C})$, we have 
    \[
    \norm{\hat{\sigma}_h - \sigma_h}_V + \norm{\hat{u}_h - u_h}_V + \norm{\hat{p}_h - p_h}_F \le \frac{8\hat{C}\,h}{\gamma}(\norm{\sigma_h}_V + \norm{u_h}_V + \norm{p_h}_F + \norm{f_h}_V)\, .
    \]
\end{proof}

\begin{remark}
    Both using DEC harmonic forms or FEEC harmonic forms in the mixed formulation will give us the same convergence rate. 
\end{remark}



Since we know that for FEEC method, that $(\sigma_h, u_h, p_h)$ converges to the true solution, then we have our DEC solution also converges to the true solution.



\section*{Acknowledgement}
The work of ANH and CZ was supported in part by NSF DMS-2208581.
The work of KH was supported by a Royal Society University Research Fellowship (URF$\backslash$R1$\backslash$221398) and an ERC Starting Grant (project 101164551, GeoFEM).

\appendix
\section{Shape regularity}

\begin{lemma}
    Let $\{X_h\}$ be a family of meshes triangulating $\Omega$. Then $r/R$ is bounded below if and only if there is a  $\delta_0 > 0$ such that $\theta \ge \delta_0$ for all angles $\theta$ in the family. This is also equivalent to the ratios $\text{area}/R^2$ for triangles being bounded below.
\end{lemma}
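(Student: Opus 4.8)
The plan is to reduce all three conditions to statements about the \emph{angle-triple} of each triangle, exploiting the fact that both $r/R$ and $\mathrm{area}/R^2$ are scale-invariant and therefore depend only on the shape of the triangle, not its size. First I would record the closed forms
\[
\frac{r}{R} = 4\sin\tfrac{A}{2}\sin\tfrac{B}{2}\sin\tfrac{C}{2}, \qquad \frac{\mu}{R^2} = 2\sin A\,\sin B\,\sin C,
\]
where $\mu$ denotes the area and $A,B,C$ the angles of the triangle. These follow from the law of sines $a = 2R\sin A$, the identity $\mu = abc/(4R)$, and $r = \mu/s$ with $s$ the semiperimeter; the quotient form of $r/R$ already appears in the proof of the shape-regularity lemma in Section~\ref{sec:norm-acute}, and the half-angle form is its similarity-invariant rewriting.

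Next I would regard both quantities as continuous functions $F$ and $G$ on the compact angle simplex $\Sigma = \{(A,B,C) : A+B+C = \pi,\ A,B,C \ge 0\}$. In the half-angle product form for $F$ and the product form for $G$ displayed above, both functions are manifestly continuous on all of $\Sigma$, strictly positive on the interior $\Sigma^\circ$, and equal to zero exactly on the boundary $\partial\Sigma$ (where some angle vanishes). The hypothesis that all angles in the family are bounded below by some $\delta_0 > 0$ is precisely the statement that every angle-triple occurring in $\{X_h\}$ lies in the compact set $\Sigma_{\delta_0} = \{(A,B,C)\in\Sigma : A,B,C \ge \delta_0\} \subset \Sigma^\circ$, and conversely any compact subset of $\Sigma^\circ$ is contained in some such $\Sigma_{\delta_0}$.

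With this setup each equivalence is a short compactness argument, carried out identically for $F$ and for $G$. If the angles are bounded below by $\delta_0$, then the continuous strictly positive function $F$ (resp. $G$) attains a positive minimum on the compact set $\Sigma_{\delta_0}$, and this minimum is a lower bound for $r/R$ (resp. $\mathrm{area}/R^2$) over the whole family. Conversely, if $r/R$ (resp. $\mathrm{area}/R^2$) is bounded below by some $c > 0$ while the angles are \emph{not} bounded below, I would choose triangles $T_n$ with smallest angle less than $1/n$; by compactness of $\Sigma$ their angle-triples have a subsequence converging to a point $p^\ast \in \partial\Sigma$, and continuity forces $F(T_n) \to F(p^\ast) = 0$ (resp. $G$), contradicting the bound $c$. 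Since each of the two ratio bounds is thereby shown to be equivalent to the angle lower bound, the two ratio bounds are equivalent to one another, yielding all three equivalences.

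I do not expect a substantial obstacle here; the statement is elementary once phrased on $\Sigma$. The only points that warrant care are verifying that $F$ and $G$ vanish on all of $\partial\Sigma$ and nowhere on $\Sigma^\circ$ — the half-angle product form for $r/R$ makes this transparent and, incidentally, sidesteps the apparent $0/0$ indeterminacy of the quotient form at the vertices of $\Sigma$ — and observing at the outset that both ratios are similarity invariants, so that passing from triangles to their angle-triples in $\Sigma$ discards no relevant information.
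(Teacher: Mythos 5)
Your proposal is correct, but it takes a genuinely different route from the paper's proof. The paper works directly with the quotient identity $r/R = \sin A\sin B\sin C/(\sin A+\sin B+\sin C)$ and argues by explicit inequality manipulation: the forward direction yields the concrete bound $r/R \ge (\sin^3\delta_0)/3$, and the converse orders the angles $A\le B\le C$ and chains two uses of the same inequality to extract $\sin A \ge 3\varepsilon^2$, hence the explicit angle bound $\delta_0 = \arcsin(3\varepsilon^2)$; the equivalence with $\mathrm{area}/R^2$ is then declared ``similar and omitted.'' You instead pass to the compact angle simplex $\Sigma$, rewrite both ratios as products ($4\sin\tfrac{A}{2}\sin\tfrac{B}{2}\sin\tfrac{C}{2}$ and $2\sin A\sin B\sin C$) that are continuous, positive exactly on $\Sigma^\circ$, and vanish exactly on $\partial\Sigma$, and then run a soft compactness argument: extreme-value theorem for the forward direction, a convergent subsequence of degenerating angle-triples for the converse. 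What your approach buys: it treats $r/R$ and $\mathrm{area}/R^2$ by one and the same argument, so it actually supplies the proof the paper omits; it avoids the angle-ordering case analysis; and the half-angle product form cleanly sidesteps the $0/0$ behavior of the quotient at the corners of $\Sigma$. What the paper's approach buys: fully explicit constants (how $\varepsilon$ and $\delta_0$ determine each other), which is in the spirit of the rest of the paper where shape-regularity constants such as $\delta_0$ are propagated quantitatively through norm-equivalence and stability estimates; your compactness argument, as stated, certifies existence of the bounds but not their dependence on $\delta_0$, though it could be made quantitative by minimizing the product formulas over $\Sigma_{\delta_0}$ explicitly.
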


\begin{proof}
    Let $T$ be some triangle in the family with angles $A, B, C$, opposite side lengths $a, b, c$ and area $\mu$. Then $r = \mu/(a+b+c)$, and $R = a/(2\sin A)$. Thus 
    \begin{equation*}
        \begin{aligned}
            \frac{r}{R} = \frac{2\mu \sin A}{a(a+b+c)} = \frac{ab\sin C \sin A}{a(a+b+c)} = \frac{\sin A^2\sin B \sin C}{\sin A (\sin A+\sin B +\sin C)} 
            = \frac{\sin A\sin B \sin C}{(\sin A+\sin B +\sin C)}
        \end{aligned}
    \end{equation*}
    If all angles are bounded by $\delta_0$, then $r/R \ge (\sin^3 \delta_0)/3$ showing that $r/R$ is bounded.

    To show the other direction assume that $r/R$ is bounded below by $\varepsilon > 0$. We need to show that $A$, $B$, and $C$ are bounded away from zero. Without loss of generality assume $A \leq B \leq C$. First, observe that
    \[
    \varepsilon \le \frac{r}{R} \leq \frac{\sin A \sin B \sin C}{3 \sin A}\, ,
    \]
    which implies $\sin B \geq \sin B \sin C \geq 3 \varepsilon$.
    Next, using the same inequality differently,
    \[
    \varepsilon \le \frac{r}{R} \leq \frac{\sin A \sin B \sin C}{\sin B} \leq \frac{\sin A \sin B \sin C}{3 \varepsilon},
    \]
    which gives $\sin A \geq 3 \varepsilon^2 > 0$.  Therefore, $A$ is bounded below by $\arcsin(3\varepsilon^2) > 0$. Since $A \leq B \leq C$, it follows that $B$ and $C$ are also bounded below by the same positive constant. Thus, all three angles are uniformly bounded away from zero. The proof of equivalence to $\text{area}/R^2$ being bounded below is similar and omitted here.
\end{proof}

\bibliography{convergence,decfem}

\begin{thebibliography}{31}
\providecommand{\natexlab}[1]{#1}
\providecommand{\url}[1]{\texttt{#1}}
\expandafter\ifx\csname urlstyle\endcsname\relax
  \providecommand{\doi}[1]{doi: #1}\else
  \providecommand{\doi}{doi: \begingroup \urlstyle{rm}\Url}\fi

\bibitem[Adler et~al.(2021)Adler, Cavanaugh, Hu, and Zikatanov]{AdCaHuZi2021}
James~H Adler, Casey Cavanaugh, Xiaozhe Hu, and Ludmil~T Zikatanov.
\newblock A finite-element framework for a mimetic finite-difference
  discretization of {M}axwell's equations.
\newblock \emph{SIAM Journal on Scientific Computing}, 43\penalty0
  (4):\penalty0 A2638--A2659, 2021.

\bibitem[Arnold(2018)]{Arnold2018}
Douglas~N. Arnold.
\newblock \emph{Finite element exterior calculus}, volume~93 of \emph{CBMS-NSF
  Regional Conference Series in Applied Mathematics}.
\newblock Society for Industrial and Applied Mathematics (SIAM), Philadelphia,
  PA, 2018.
\newblock ISBN 978-1-611975-53-6.
\newblock \doi{10.1137/1.9781611975543.ch1}.

\bibitem[Arnold et~al.(2010)Arnold, Falk, and Winther]{ArFaWi2010}
Douglas~N. Arnold, Richard~S. Falk, and Ragnar Winther.
\newblock Finite element exterior calculus: from {H}odge theory to numerical
  stability.
\newblock \emph{Bull. Amer. Math. Soc. (N.S.)}, 47\penalty0 (2):\penalty0
  281--354, 2010.
\newblock \doi{10.1090/S0273-0979-10-01278-4}.

\bibitem[Baranger et~al.(1996)Baranger, Maitre, and Oudin]{BarMaiOud96}
J.~Baranger, J.-F. Maitre, and F.~Oudin.
\newblock Connection between finite volume and mixed finite element methods.
\newblock \emph{RAIRO Mod\'el. Math. Anal. Num\'er.}, 30\penalty0 (4):\penalty0
  445--465, 1996.
\newblock ISSN 0764-583X.
\newblock \doi{10.1051/m2an/1996300404451}.

\bibitem[Boffi et~al.(2013)Boffi, Brezzi, and Fortin]{BofBreFor13}
D.~Boffi, F.~Brezzi, and M.~Fortin.
\newblock \emph{Mixed finite element methods and applications}, volume~44 of
  \emph{Springer Series in Computational Mathematics}.
\newblock Springer, Heidelberg, 2013.
\newblock ISBN 978-3-642-36518-8; 978-3-642-36519-5.
\newblock \doi{10.1007/978-3-642-36519-5}.

\bibitem[Bossavit and Kettunen(1999)]{BoKe1999}
A.~Bossavit and L.~Kettunen.
\newblock Yee-like schemes on a tetrahedral mesh, with diagonal lumping.
\newblock \emph{International Journal of Numerical Modelling: Electronic
  Networks, Devices and Fields}, 12\penalty0 (1-2):\penalty0 129--142, 1999.
\newblock ISSN 1099-1204.
\newblock
  \doi{10.1002/(SICI)1099-1204(199901/04)12:1/2<129::AID-JNM327>3.0.CO;2-G}.

\bibitem[Bossavit(1988)]{Bossavit1988}
Alain Bossavit.
\newblock A rationale for ``edge elements" in 3-{D} fields computations.
\newblock \emph{IEEE Trans. Mag.}, 24\penalty0 (1):\penalty0 74--79, 1 1988.

\bibitem[Chou et~al.(1998)Chou, Kwak, and Vassilevski]{ChKwVa1998}
So-Hsiang Chou, Do~Y Kwak, and Panayot~S Vassilevski.
\newblock Mixed covolume methods for elliptic problems on triangular grids.
\newblock \emph{SIAM Journal on Numerical Analysis}, 35\penalty0 (5):\penalty0
  1850--1861, 1998.

\bibitem[Christiansen and Halvorsen(2011)]{ChHa2011}
Snorre~H. Christiansen and Tore~G. Halvorsen.
\newblock A gauge invariant discretization on simplicial grids of the
  schr\"odinger eigenvalue problem in an electromagnetic field.
\newblock \emph{SIAM Journal on Numerical Analysis}, 49\penalty0 (1):\penalty0
  331--345, 2011.
\newblock ISSN 0036-1429.
\newblock \doi{10.1137/090757502}.
\newblock Publisher: Society for Industrial and Applied Mathematics.

\bibitem[Ciarlet(1991)]{Cia91}
P.~G. Ciarlet.
\newblock Basic error estimates for elliptic problems.
\newblock In \emph{Handbook of numerical analysis, {V}ol.\ {II}}, Handb. Numer.
  Anal., II, pages 17--351. North-Holland, Amsterdam, 1991.

\bibitem[Guzm{\'a}n and Potu(2025)]{GuPo2025}
Johnny Guzm{\'a}n and Pratyush Potu.
\newblock A framework for analysis of {DEC} approximations to
  {H}odge-{L}aplacian problems using generalized {W}hitney forms, 2025.
\newblock URL \url{https://arxiv.org/abs/2505.08934}.

\bibitem[Harlow and Welch(1965)]{HaWe1965}
Francis~H. Harlow and J.~Eddie Welch.
\newblock Numerical calculation of time-dependent viscous incompressible flow
  of fluids with free surface.
\newblock \emph{The Physics of Fluids}, 8\penalty0 (12):\penalty0 2182--2189,
  December 1965.

\bibitem[Haugazeau and Lacoste(1993)]{HaLa1993}
Yves Haugazeau and Patrick Lacoste.
\newblock Condensation de la matrice de masse pour les {\'e}l{\'e}ments finis
  mixtes de {H}(rot).
\newblock \emph{Comptes rendus de l'Acad{\'e}mie des sciences. S{\'e}rie I,
  Math{\'e}matique}, 316\penalty0 (5):\penalty0 509--512, 1993.

\bibitem[Hirani(2003)]{Hirani2003}
Anil~N. Hirani.
\newblock \emph{Discrete Exterior Calculus}.
\newblock PhD thesis, California Institute of Technology, 5 2003.

\bibitem[Hirani et~al.(2013)Hirani, Kalyanaraman, and Vander{Z}ee]{HiKaVa2013}
Anil~N. Hirani, Kaushik Kalyanaraman, and Evan~B. Vander{Z}ee.
\newblock Delaunay {H}odge star.
\newblock \emph{Computer-Aided Design}, 45\penalty0 (2):\penalty0 540--544,
  2013.
\newblock \doi{10.1016/j.cad.2012.10.038}.

\bibitem[Holst and Stern(2012)]{Holst_2012}
Michael Holst and Ari Stern.
\newblock Geometric variational crimes: Hilbert complexes, finite element
  exterior calculus, and problems on hypersurfaces.
\newblock \emph{Foundations of Computational Mathematics}, 12\penalty0
  (3):\penalty0 263–293, April 2012.
\newblock ISSN 1615-3383.
\newblock \doi{10.1007/s10208-012-9119-7}.

\bibitem[Joly(2003)]{Jol03}
P.~Joly.
\newblock Variational methods for time-dependent wave propagation problems.
\newblock In \emph{Topics in computational wave propagation}, volume~31 of
  \emph{Lect. Notes Comput. Sci. Eng.}, pages 201--264. Springer, Berlin, 2003.

\bibitem[Kalyanaraman(2015)]{Kalyanaraman2015}
Kaushik Kalyanaraman.
\newblock \emph{Hodge {L}aplacians on Simplicial Meshes and Graphs}.
\newblock PhD thesis, University of Illinois at Urbana-Champaign, 2015.
\newblock URL \url{http://hdl.handle.net/2142/89049}.

\bibitem[Mattiussi(1997)]{Mattiussi1997}
Claudio Mattiussi.
\newblock An analysis of finite volume, finite element, and finite difference
  methods using some concepts from algebraic topology.
\newblock \emph{Journal of Computational Physics}, 133\penalty0 (2):\penalty0
  289 -- 309, 1997.
\newblock ISSN 0021-9991.

\bibitem[Mohamed et~al.(2016)Mohamed, Hirani, and Samtaney]{MoHiSa2016}
Mamdouh~S. Mohamed, Anil~N. Hirani, and Ravi Samtaney.
\newblock Discrete exterior calculus discretization of incompressible
  {N}avier--{S}tokes equations over surface simplicial meshes.
\newblock \emph{Journal of Computational Physics}, 312:\penalty0 175 -- 191,
  2016.
\newblock ISSN 0021-9991.
\newblock \doi{10.1016/j.jcp.2016.02.028}.

\bibitem[M\"onk\"ol\"a et~al.(2025)M\"onk\"ol\"a, R\"abin\"a, Saksa, and
  Rossi]{MoRaSaRo2025}
Sanna M\"onk\"ol\"a, Jukka R\"abin\"a, Tytti Saksa, and Tuomo Rossi.
\newblock (2+1)-dimensional discrete exterior discretization of a general wave
  model in {M}inkowski spacetime.
\newblock \emph{Results in Applied Mathematics}, 25:\penalty0 100528, 2025.
\newblock ISSN 2590-0374.
\newblock \doi{10.1016/j.rinam.2024.100528}.

\bibitem[Nicolaides and Wang(1998)]{NiWa1998}
R~Nicolaides and D-Q Wang.
\newblock Convergence analysis of a covolume scheme for maxwell’s equations
  in three dimensions.
\newblock \emph{Mathematics of Computation}, 67\penalty0 (223):\penalty0
  947--963, 1998.

\bibitem[Nicolaides(1992)]{Nicolaides1992}
Roy~A Nicolaides.
\newblock Direct discretization of planar div-curl problems.
\newblock \emph{SIAM Journal on Numerical Analysis}, 29\penalty0 (1):\penalty0
  32--56, 1992.

\bibitem[Ringler et~al.(2010)Ringler, Thuburn, Klemp, and
  Skamarock]{RiThKlSk2010}
T.D. Ringler, J.~Thuburn, J.B. Klemp, and W.C. Skamarock.
\newblock A unified approach to energy conservation and potential vorticity
  dynamics for arbitrarily-structured {C}-grids.
\newblock \emph{Journal of Computational Physics}, 229\penalty0 (9):\penalty0
  3065 -- 3090, 2010.

\bibitem[Roberts and Thomas(1991)]{RobTho91}
J.~E. Roberts and J.-M. Thomas.
\newblock Mixed and hybrid methods.
\newblock In \emph{Handbook of numerical analysis, {V}ol.\ {II}}, Handb. Numer.
  Anal., II, pages 523--639. North-Holland, Amsterdam, 1991.

\bibitem[Schulz and Tsogtgerel(2020)]{ScTs2020}
Erick Schulz and Gantumur Tsogtgerel.
\newblock Convergence of discrete exterior calculus approximations for poisson
  problems.
\newblock \emph{Discrete \& Computational Geometry}, 63\penalty0 (2):\penalty0
  346--376, 2020.
\newblock ISSN 1432-0444.
\newblock \doi{10.1007/s00454-019-00159-x}.

\bibitem[Sen et~al.(2000)Sen, Sen, Sexton, and Adams]{SeSeSeAd2000}
Samik Sen, Siddhartha Sen, James~C. Sexton, and David~H. Adams.
\newblock Geometric discretization scheme applied to the abelian
  {C}hern-{S}imons theory.
\newblock \emph{Phys. Rev. E (3)}, 61\penalty0 (3):\penalty0 3174--3185, 2000.

\bibitem[VanderZee et~al.(2010)VanderZee, Hirani, Guoy, and
  Ramos]{VaHiGuRa2010}
Evan VanderZee, Anil~N. Hirani, Damrong Guoy, and Edgar~A. Ramos.
\newblock Well-centered triangulation.
\newblock \emph{SIAM Journal on Scientific Computing}, 31\penalty0
  (6):\penalty0 4497--4523, 2010.
\newblock \doi{10.1137/090748214}.

\bibitem[Wang et~al.(2023)Wang, Jagad, Hirani, and Samtaney]{WaJaHiSa2023}
Minmiao Wang, Pankaj Jagad, Anil~N. Hirani, and Ravi Samtaney.
\newblock Discrete exterior calculus discretization of two-phase incompressible
  {N}avier-{S}tokes equations with a conservative phase field method.
\newblock \emph{Journal of Computational Physics}, 488, 2023.
\newblock \doi{10.1016/j.jcp.2023.112245}.

\bibitem[Whitney(1957)]{Whitney1957}
Hassler Whitney.
\newblock \emph{Geometric Integration Theory}.
\newblock Princeton University Press, Princeton, N. J., 1957.

\bibitem[Yee(1966)]{Yee1966}
Kane~S. Yee.
\newblock Numerical solution of initial boundary value problems involving
  {M}axwells equations in isotropic media.
\newblock \emph{IEEE Transactions on Antennas and Propagation}, AP-14\penalty0
  (3):\penalty0 302--307, May 1966.

\end{thebibliography}
\bibliographystyle{plainnat}

\end{document}